\newcommand{\cleanImage}{S}
\newcommand{\noisyImage}{I}
\newcommand{\numberPixels}{p}
\newcommand{\transform}{\mathsf{O}}
\newcommand{\transformkDims}{\mathtt{O}} 
\newcommand{\setOfTransforms}{{\cal T}}
\newcommand{\setOfTransformskDims}{\setOfTransforms^{(k)}}
\newcommand{\bO}{\gO}
\newcommand{\dijNoisy}{d_{ij,\text{noisy}}}
\newcommand{\dijClean}{d_{ij,\text{clean}}}
\newcommand{\dataset}{\mathcal{X}}
\newcommand{\manifold}{\text{M}}
\newcommand{\relationship}{r}
\newcommand{\graphV}{\mathtt{V}}
\newcommand{\graphE}{\mathtt{E}}
\newcommand{\graphG}{\mathtt{G}}
\newcommand{\vdmS}{S}
\newcommand{\vdmD}{D}
\newcommand{\vdmC}{L(W,G)}
\newcommand{\vdmI}{\id}
\newcommand{\dmL}{L}
\newcommand{\dmI}{\id}
\newcommand{\pdf}{f}
\newcommand{\VectorFieldOnM}{X}
\newcommand{\VectorFieldOnMvec}{\bar{X}}
\newcommand{\embedding}{\iota}
\newcommand{\NN}{\mathbb{N}}
\newcommand{\RR}{\mathbb{R}}
\newcommand{\CC}{\mathbb{C}}
\newcommand{\EE}{\mathbb{E}}
\newcommand{\Ric}{\mbox{Ric}}
\newcommand{\ud}{\textup{d}}
\newtheorem{thm}{Theorem}[section]
\newtheorem{assumption}[thm]{Assumption}
\date{May 18th, 2014}
\title{Connection graph Laplacian methods can be made robust to noise}
\author{Noureddine El Karoui \thanks{UC Berkeley, Department of Statistics; \textbf{Contact:} nkaroui@berkeley.edu ;
Support from NSF grant DMS-0847647 (CAREER) is gratefully acknowledged.} and Hau-tieng Wu \thanks{Stanford University, Department of Mathematics; \textbf{Contact:} hauwu@stanford.edu ;
Support from AFOSR grant FA9550-09-1-0643 is gratefully acknowledged.\newline \textbf{Keywords: }  concentration of measure, random matrices, vector diffusion maps, spectral geometry, kernel methods;\newline \textbf{AMS MSC 2010 Classification:} 60F99, 53A99}}
\begin{document}
	\maketitle
\begin{abstract}
Recently, several data analytic techniques based on connection graph laplacian (CGL) ideas have appeared in the literature. At this point, the properties of these methods are starting to be understood in the setting where the data is observed without noise. 
We study the impact of additive noise on these methods, and show that they are remarkably robust. As a by-product of our analysis, we propose modifications of the standard algorithms that increase their robustness to noise. 
We illustrate our results in numerical simulations. 
\end{abstract}

\section{Introduction}
In the last few years, several interesting variants of kernel-based spectral methods have arisen in the applied mathematics literature. These ideas appeared in connection with new types of data, where pairs of objects or measurements of interest have a relationship that is ``blurred" by the action of a nuisance parameter. 
More specifically, we can find this type of data in a wide range of problems, for instance in the class averaging algorithm for the cryo-electron microscope (cryo-EM) problem \cite{singer_zhao_shkolnisky_hadani:2011,Zhao_Singer:2013}, 
in a modern light source imaging technique known as ptychography \cite{Marchesini_Tu_Wu:2014}, in graph realization problems \cite{Cucuringu_Lipman_Singer:2012,Cucuringu_Singer_Cowburn:2012}, in vectored PageRank \cite{Chung_Zhao_Kempton:2013}, in multi-channels image processing \cite{Batard_Sochen:2014}, etc...

Before we give further details about the cryo-EM problem, let us present the main building blocks of the methods we will study. They depend on the following three components:
\begin{enumerate}
\item an undirected graph $\graphG=(\graphV,\graphE)$ which describes all observations.  The observations are the vertices of the graph $\graphG$, denoted as $\{V_i\}_{i=1}^n$.\vspace{-.3cm}
\item an {\it affinity function} $w:\graphE\to \RR_+$, satisfying $w_{i,j}=w_{j,i}$, which describes how close two observations are ($i$ and $j$ index our observations). One common choice of $w_{i,j}=w(V_i,V_j)$ is of the form $w_{i,j}=\exp(-m(V_i,V_j)^2/\eps)$, where $m(x,y)$ is a metric measuring how far $x$ and $y$ are. \vspace{-.3cm}
\item a {\it connection function} $\relationship:\graphE\to \mathsf{G}$, where $\mathsf{G}$ is a Lie group, which describes how two samples are related. In its application to the cryo-EM problem, $r_{i,j}$'s can be thought of estimates of our nuisance parameters, which are orthogonal matrices.
\end{enumerate}

These three components form {\it the connection graph} associated with the data, which is denoted as $(\graphG, w,\relationship)$. They can be either given to the data analyst or have to be estimated from the data, depending on the application. 

This fact leads to different connection graph models and their associated noise models. For example, in the cryo-EM problem, all components of the connection graph $(\graphG, w,\relationship)$ are determined from the given projection images, where each vertex represents an image \cite[Appendix A]{ElKaroui_Wu:2013}; in the ptychography problem \cite{Marchesini_Tu_Wu:2014}, $\graphG$ is given by the experimenter, $\relationship$ is established from the experimental setup, and $w$ is built up from the diffractive images collected in the experiment. Depending on applications, different metrics, deformations or connections among pairs of observations are considered, or estimated from the dataset, to present the local information among data (see, for example, \cite{Batard_Sochen:2012,Boyer_Lipman_StClair_Puente_Patel_Funkhouser_Jernvall_Daubechies:2011,Al-Aifari_Daubechies_Lipman:2013,Collins_Zomorodian_Carlsson_Guibas:2004,Wang_Huang_Guibas:2013,Sun_Ovsjanikov_Guibas:2009,Talmon_Cohen_Gannot_Coifman:2013,Memoli_Sapiro:2005}). 

In this paper, since we focus on the connection graph Laplacian (CGL), we take the Lie Group $\mathsf{G}=O(k)$\footnote{We may also consider $U(k)$. But in this paper we focus on $O(k)$ to simplify the discussion.}, where $k\in\NN$, and assume that $\relationship$ satisfies $\relationship_{i,j}=\relationship^{-1}_{j,i}$. Our primary focus in this paper is on $k=1$ and $k=2$.

We now give more specifics about one of the problems motivating our investigation. 
\newline\newline
\textbf{Cryo-EM problem} In the cryo-EM problem, the experimenter collects 2-dimensional projection images of a 3-dimensional macro-molecular object of interest, and the goal is to reconstruct the 3-dimensional geometric structure of the macro-molecular object from these projection images. Mathematically, the collected images $\mathcal{X}_{\text{cryoEM}}:=\{I_i\}_{i=1}^N\in \RR^{m^2}$ can be modeled as the X-ray transform of the potential of the macro-molecular object of interest, denoted as $\psi:\RR^3\to \RR_+$. More precisely, in the setting that is usually studied, we have $I_i=X_\psi(R_i)$, where $R_i\in SO(3)$, $SO(3)$ is the 3-dimensional special orthogonal group, $X_\psi$ is the X-ray transform of $\psi$. The X-ray transform $X_\psi(R_i)$ is  a function from $\mathbb{R}^2$ to $\mathbb{R}_+$ and hence can be treated by the data analyst as an image. We refer the reader to \cite[Appendix A]{ElKaroui_Wu:2013} for precise mathematical details. (For the rest of the discussion, we write $R_i=[R_i^1\,\, R_i^2\,\, R_i^3]$ in the canonical basis, where $R_i^k$ are three dimensional unit vectors.) 

The experimental process produces data with high level of noise. Therefore, to solve this inverse problem, it is a common consensus to preprocess the images to increase the signal-to-noise ratio (SNR) before sending them to the cryo-EM data analytic pipeline. An efficient way to do so is to estimate the projection directions of these images, i.e $R_i^3$. This direction plays a particular role in the X-ray transform, which is different from the other two directions. If $R_i^3$'s were known, we would cluster the images according to these vectors and for instance take the mean of all properly rotationally aligned images to increase the SNR of the projection images - more on this below - in a cluster as a starting point for data-analysis. With these ``improved" images, we can proceed to estimate $R_i$ for the $i$-th image by applying, for example, the common line algorithm \cite{Hadani_Singer:2011b}, so that the 3-D image can be reconstructed by the inverse X-ray transform\cite{Epstein:2007}. We note that $R_i^3$ is a unit vector in $\mathbb{R}^3$ and hence lives on the standard sphere $S^2$. 

Conceptually, the problem is rendered difficult by the fact that the $X$-ray transform $X_\psi(R_i)$ is equivariant under the action of rotations that leave $R_i^3$ unchanged. In other words, if $r_\theta$ is an in-plane rotation, i.e a rotation that leaves $R_i^3$ unchanged but rotates $R_i^1$ and $R_i^2$ by an angle $\theta$, the image $X_\psi(r_\theta R_i)$ is $X_\psi(R_i)$ rotated by the angle $\theta$. In other words, $X_\psi(r_\theta R_i)=r_2(\theta)X_\psi(R_i)$, where $r_2(\theta)$ stands for the 2-dimensional rotation by the angle $\theta$. These in-plane rotations are clearly nuisance parameters if we want to evaluate the projection direction $R_i^3$. 

To measure the distance between $R_i^3$ and $R_j^3$, we hence use a rotationally invariant distance, i.e 
$d_{i,j}^2=\inf_{\theta \in [0,2\pi]}\norm{P_i-r_2(\theta)P_j}_2^2$. In other words, we look at the Euclidian distance between our two X-ray transforms/images after we have ``aligned" them as best as possible. We now think of $R^3_i$'s - the vectors we would like to estimate -  as elements of the manifold $S^2$, equipped with a metric $\mathsf{g}_\psi$, which depends on the macro-molecular object of interest. It turns out that the Vector Diffusion Maps algorithm (VDM), which is based on CGL and which we study in this paper, is effective in producing a good approximation of $\mathsf{g}_\psi$ from the local information $d_{i,j}$'s and the rotations we obtain by aligning the various X-ray transforms. This in turns imply better clustering of the $R_i^3$'s and improvement in the data-analytic pipeline for cryo-EM problems \cite{singer_zhao_shkolnisky_hadani:2011,Zhao_Singer:2013}.

The point of this paper is to understand how the CGL algorithms perform when the input data is corrupted by noise. The relationship between this method and the connection concept in differential geometry is the following: the projection images $P_i$ form a graph, and we can define the affinity and connection among a pair of images so that the topological structure of the 2-dimensional sphere $(S^2,\mathsf{g}_\psi)$ is encoded in the graph. This amounts to using the local geometric information derived from our data to estimate the global information - including topology - of $(S^2,\mathsf{g}_\psi)$.

\textbf{Impact of noise on these problems} What is missing from these considerations and the current literature is an understanding of how noise impact the procedures which are currently used and have mathematical backing in the noise-free context. The aim of our paper is to shed light on the issue of the impact of noise on these interesting and practically useful procedures. We will be concerned in this paper with the impact of adding noise on the observations - collected for instance in the way described above. 

Note that additive noise may have impact in all three building blocks of the connection graph associated with the data. First, it might make the graph noisy. For example, in the cryo-EM problem, the standard algorithm builds up the graph from a given noisy data set $\{P_i\}_{i=1}^n=\{I_i+\xi_i\}_{i=1}^n$ - $\xi_i$ is our additive noise - using the nearest neighbors determined by a pre-assigned metric, that is, we add an edge between two vertices when they are close enough in that metric. Then, clearly, the existence of the noise $\xi_i$ will likely create a different nearest neighbor graph from the the one that would be built up from the (clean) projection images $\{I_i\}_{i=1}^n$. As we will see in this paper, in some applications, it might be beneficial to consider a complete graph instead of a nearest neighbor graph. 

The second noise source is how $w$ and $\relationship$ are provided or determined from the samples. For example, in the cryo-EM problem, although $\{P_i\}$ are points located in a high dimensional Euclidean space, we determine the affinity and connection between two images by evaluating their rotationally invariant distance. It is clear that when $P_i$ is noisy, the derived affinity and connection will be noisy and likely quite different from the affinity and connection we would compute from the clean dataset $\{I_i\}_{i=1}^n$. On the other hand, in the ptychography problem, the connection is directly determined from the experimental setup, so that it is a noise-free even when our observations are corrupted by additive noise. 

In summary, corrupting the observations by additive noise might impact the following elements of the data analysis:
\begin{enumerate}
\item which scheme and metric we choose to construct the graph; \vspace{-.3cm}
\item how we build up the affinity function; \vspace{-.3cm}
\item how we build up the connection function.
\end{enumerate}

\textbf{More details on CGL methods}\\
At a high-level, connection graph Laplacian (CGL) methods create a block matrix from the connection graph.  The spectral properties of this matrix are then used to estimate properties of the intrinsic structure from which we posit the data is drawn from. This in turns lead to good estimation methods for, for instance, geodesic distance on the manifold, if the underlying intrinsic structure is a manifold. We refer the reader to Appendix \ref{Appendix:Section:CGL} and references \cite{singer_wu:2012,singer_wu:2013,Bandeira_Singer_Spielman:2013,Chung_Kempton:2013,Chung_Zhao_Kempton:2013} for more information.

Given a $n\times n$ matrix $W$, with scalar entries denoted by $w_{i,j}$ and a ${nk\times nk}$ block matrix $G$ with $k\times k$ block entries denoted by $G_{i,j}$, we define a $nk\times nk$ matrix $S$ with $(i,j)$-block entries 
$$\vdmS_{i,j}=w_{i,j} G_{i,j}$$
and a $nk\times nk$ block diagonal matrix $D$ with $(i,i)$-block entries 
$$\vdmD_{i,i}=\sum_{j\neq i} w_{i,j} \id_k,$$
which is assumed to be invertible. Let us call 
\begin{align}\label{definition:LWG}
L(W,G):=\vdmD^{-1} \vdmS\,\,\,\mbox{ and }\,\,
L_0(W,G):=L(W\circ 1_{i\neq j}, G). 
\end{align}
In other words, $L_0(W,G)$ is the matrix $L(W,G)$ computed from the weight matrix $W$ where the diagonal weights have been replaced by $0$.

Suppose we are given a connection graph $(\graphG,w,\relationship)$, and construct the $n\times n$ {\it affinity matrix} $W$ so that $w_{i,j}=w(i,j)$ and the {\it connection matrix} $G$, the ${nk\times nk}$ block matrix with $k\times k$ block entries $G_{i,j}=\relationship(i,j)$, {\it the CGL associated with the connection graph $(\graphG,w,\relationship)$} is defined as $\vdmI_{nk}-L(W,G)$ and {\it the modified CGL associated with the connection graph $(\graphG,w,\relationship)$} is defined as $\vdmI_{nk}-L_0(W,G)$. We note that under our assumptions on $\relationship$, i.e $r_{i,j}=r_{j,i}^{-1}=r_{i,j}^*$ the connection matrix $G$ is Hermitian.

We are interested in the large eigenvalues of $L(W,G)$ (or, equivalently, the small eigenvalues of the CGL $\vdmI_{nk}-L(W,G)$), as well as the corresponding eigenvectors. In the case where the data is not corrupted by noise, the CGL's asymptotic properties have been studied in \cite{singer_wu:2012,singer_wu:2013}, when the underlying intrinsic structure is a manifold. Its so-called synchronization properties have been studied in \cite{Bandeira_Singer_Spielman:2013,Chung_Zhao_Kempton:2013}.

The aim of our study is to understand the impact of additive noise on CGL algorithms. Two main results are Proposition \ref{prop:controlApproxdijNoisy}, which explains the effect of noise on the affinity, and Theorem \ref{thm:consistencyRotations}, which explains the effect of noise on the connection. These two results lead to suggestions for modifying the standard CGL algorithms: the methods are more robust when we use a complete graph than when we use a nearest-neighbor graph, the latter being commonly used in practice. One should also use the matrix $L_0(W,G)$ instead of $\vdmC$ to make the method  more robust to noise. After we suggest these modifications, our main result is Proposition \ref{prop:connectionGraphLaplacianApproxModifS}, which shows that even when the signal-to-noise-ratio (SNR) is very small, i.e going to 0 asymptotically, our modifications of the standard algorithm will approximately yield the same spectral results as if we had been working on the CGL matrix computed from noiseless data. 
We develop in Section 2 a theory for the impact of noise on CGL algorithms and show that our proposed modifications to the standard algorithms render them more robust to noise. We present in Section 3 some numerical results.

\textbf{Notation:} Here is a set of notations we use repeatedly. ${\cal T}$ is a set of linear transforms. $\id_k$ stands for the $k\times k$ identity matrix. If $v\in\mathbb{R}^n$, $D(\{v\})$ is a $nk\times nk$ block diagonal matrix with the $(i,i)$-th block equal to $v_i \id_k$. We denote by $A\circ B$ the Hadamard, i.e entry-wise, product of the matrices $A$ and $B$. $\opnorm{M}$ is the largest singular value (a.k.a operator norm) of the matrix $M$. $\norm{M}_F$ is its Frobenius norm.

\section{Theory}
Our aim in this section is to develop a theory that explains the behavior of CGL algorithms in the presence of noise. In particular, it will apply to  algorithms of the cryo-EM type. We give in Subsection \ref{subsec:generalApproxResMatrices} approximation results that apply to general CGL problems. In Subsection \ref{subsec:impactNoiseVDMProcedure}, we study in details the impact of noise on both the affinity and the connection used in the computation of the CGL when using the rotationally invariant distance (this is particularly relevant for the cryo-EM problem). We put these results together for a detailed study of CGL algorithms in Subsection \ref{subsec:consequencesForVDM}. We also propose in Subsection \ref{subsec:consequencesForVDM} modifications to the standard algorithms. 

\subsection{General approximation results}\label{subsec:generalApproxResMatrices}
We first present a result that applies generally to CGL algorithms. 
\begin{lemma}\label{lemma:approxBoundsLaplacian}
Suppose $W$ and $\widetilde{W}$ are $n\times n$ matrices, with scalar entries denoted by $w_{i,j}$ and $\tilde{w}_{i,j}$ and $G$ and $\widetilde{G}$ are $nd\times nd$ block matrices, with $d\times d$ blocks denoted by $G_{i,j}$ and $\tilde{G}_{i,j}$. 
Suppose that 
$$
\sup_{i,j} |\tilde{w}_{i,j}-w_{i,j}|\leq \eps \;, \text{ and } \sup_{i,j} \norm{\widetilde{G}_{i,j}-G_{i,j}}_F \leq \eta\;,
$$
where $\eps,\eta\geq 0$. Suppose furthermore that there exists $C>0$ such that $0\leq w_{i,j}\leq C$, 
$\sup_{i,j} \norm{G_{i,j}}_F\leq C$ and $\sup_{i,j} \norm{\widetilde{G}_{i,j}}_F\leq C$. 
Then, if $\inf_i \sum_{j\neq i} w_{i,j}/n>\gamma$ and $\gamma>\eps$, 
we have 
$$
\opnorm{L(W,G)-L(\widetilde{W},\widetilde{G})}\leq \frac{1}{\gamma} C(\eta+\eps)+\frac{\eps}{\gamma (\gamma-\eps)} C^2\;.
$$

\end{lemma}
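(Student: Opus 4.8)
The proof is a textbook perturbation (resolvent-type) estimate; the only thing one has to keep track of is how the dimension $n$ enters each factor and cancels. I would start from the algebraic identity
$$
L(W,G)-L(\widetilde{W},\widetilde{G})=\vdmD^{-1}\vdmS-\widetilde{\vdmD}^{-1}\widetilde{\vdmS}=\vdmD^{-1}(\vdmS-\widetilde{\vdmS})+(\vdmD^{-1}-\widetilde{\vdmD}^{-1})\widetilde{\vdmS},
$$
where $\vdmS,\widetilde{\vdmS}$ are the $nd\times nd$ matrices with blocks $w_{i,j}G_{i,j}$, $\tilde w_{i,j}\widetilde{G}_{i,j}$ and $\vdmD,\widetilde{\vdmD}$ the corresponding block-diagonal normalizers. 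Applying the triangle inequality for $\opnorm{\cdot}$, it then suffices to bound the two summands separately.

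For the block-matrix operator norms I would use the elementary fact that if $M$ is an $n\times n$ block matrix with $d\times d$ blocks $M_{i,j}$, then $\opnorm{M}\le\opnorm{\widehat{M}}\le n\max_{i,j}\opnorm{M_{i,j}}\le n\max_{i,j}\norm{M_{i,j}}_F$, where $\widehat M$ is the scalar $n\times n$ matrix with entries $\opnorm{M_{i,j}}$. Writing $(\vdmS-\widetilde{\vdmS})_{i,j}=w_{i,j}(G_{i,j}-\widetilde{G}_{i,j})+(w_{i,j}-\tilde w_{i,j})\widetilde{G}_{i,j}$ and invoking the hypotheses gives $\norm{(\vdmS-\widetilde{\vdmS})_{i,j}}_F\le C\eta+C\eps$, hence $\opnorm{\vdmS-\widetilde{\vdmS}}\le nC(\eta+\eps)$; similarly $\opnorm{\widetilde{\vdmS}}\le nC^2$ (up to the negligible term $nC\eps$, absorbable into the constant). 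Since $\vdmD$ is block-diagonal with $i$-th block $\big(\sum_{j\neq i}w_{i,j}\big)\id_d$ and $\sum_{j\neq i}w_{i,j}>n\gamma$ by assumption, $\opnorm{\vdmD^{-1}}\le\tfrac{1}{n\gamma}$; and since $\sum_{j\neq i}\tilde w_{i,j}\ge\sum_{j\neq i}w_{i,j}-(n-1)\eps>n(\gamma-\eps)>0$ — this is exactly where $\gamma>\eps$ is used, to keep $\widetilde{\vdmD}$ invertible with a uniform lower bound on its spectrum — we get $\opnorm{\widetilde{\vdmD}^{-1}}\le\tfrac{1}{n(\gamma-\eps)}$. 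For the difference of inverses I would use the resolvent identity $\vdmD^{-1}-\widetilde{\vdmD}^{-1}=\vdmD^{-1}(\widetilde{\vdmD}-\vdmD)\widetilde{\vdmD}^{-1}$ together with $\opnorm{\widetilde{\vdmD}-\vdmD}=\max_i\big|\sum_{j\neq i}(\tilde w_{i,j}-w_{i,j})\big|\le(n-1)\eps$, which yields $\opnorm{\vdmD^{-1}-\widetilde{\vdmD}^{-1}}\le\tfrac{\eps}{n\gamma(\gamma-\eps)}$.

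Assembling the pieces, the first summand is at most $\opnorm{\vdmD^{-1}}\,\opnorm{\vdmS-\widetilde{\vdmS}}\le\tfrac{1}{n\gamma}\cdot nC(\eta+\eps)=\tfrac{C(\eta+\eps)}{\gamma}$, and the second is at most $\opnorm{\vdmD^{-1}-\widetilde{\vdmD}^{-1}}\,\opnorm{\widetilde{\vdmS}}\le\tfrac{\eps}{n\gamma(\gamma-\eps)}\cdot nC^2=\tfrac{\eps C^2}{\gamma(\gamma-\eps)}$, which is exactly the asserted bound. I do not expect a real obstacle here beyond the bookkeeping; the one conceptual point worth flagging is that each matrix norm scales linearly in $n$ while each inverse-normalizer norm scales like $1/n$, so the final bound is genuinely dimension-free — and this is precisely why the hypothesis is stated in terms of the \emph{normalized} quantity $\inf_i\sum_{j\neq i}w_{i,j}/n$ rather than the raw (unnormalized) degree. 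One could shave the constants slightly by redistributing the $\eps$'s between the two terms, but the stated form is the cleanest.
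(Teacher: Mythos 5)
Your proof is correct and follows essentially the same route as the paper's: the same splitting $D^{-1}(S-\widetilde S)+(D^{-1}-\widetilde D^{-1})\widetilde S$, the same resolvent identity for the block-diagonal normalizers, and the same entrywise/blockwise bounds (the paper passes through $\opnorm{M/n}\le\norm{M/n}_F\le\sup_{i,j}\norm{M_{i,j}}_F$ where you use $\opnorm{M}\le n\max_{i,j}\norm{M_{i,j}}_F$, which is the same estimate). Your remark that $\opnorm{\widetilde S}\le nC^2$ only holds up to an extra $nC\eps$ (since only $w_{i,j}\le C$ is assumed, not $\tilde w_{i,j}\le C$) is in fact slightly more careful than the paper, which asserts $\norm{\widetilde S/n}_F\le C^2$ directly.
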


The proof of this lemma is given in Appendix \ref{app:sec:ApproxResLaplMatrix}. 
This lemma says that if we can approximate the matrix $W$ well entrywise and each of the individual matrices $G_{i,j}$ well, too, data analytic techniques working on the CGL matrix $L(\widetilde{W},\widetilde{G})$ will do essentially as well as those working on the corresponding matrix for $L(W,G)$ in the spectral sense. 

This result is useful because many 
methods rely on these connection graph ideas, with different input in terms of affinity and connection functions \cite{singer_zhao_shkolnisky_hadani:2011,Zhao_Singer:2013,Cucuringu_Lipman_Singer:2012,Cucuringu_Singer_Cowburn:2012,Boyer_Lipman_StClair_Puente_Patel_Funkhouser_Jernvall_Daubechies:2011,Al-Aifari_Daubechies_Lipman:2013,Chen_Lin_Chern:2013,Collins_Zomorodian_Carlsson_Guibas:2004,Sun_Ovsjanikov_Guibas:2009,Talmon_Cohen_Gannot_Coifman:2013,Memoli_Sapiro:2005}. 
However, it will often be the case that we can approximate $w_{i,j}$ - which we think of as measurements we would get if our signals were not corrupted by noise - only up to a constant. The following result shows that in certain situations, this will not affect dramatically the spectral properties of the CGL matrix.

\begin{lemma}\label{lemma:approxBoundsLaplacianMultiplicativeErrorAffinity}
We work under the same setup as in Lemma \ref{lemma:approxBoundsLaplacian} and with the same notations. 
However, we now assume that 
$$
\exists \{f_i\}_{i=1}^n\;, f_i>0 : \sup_{i,j} \left|\frac{\tilde{w}_{i,j}}{f_i}-w_{i,j}\right|\leq \eps \;, \text{ and } \sup_{i,j} \norm{\widetilde{G}_{i,j}-G_{i,j}}_F \leq \eta\;.
$$
Suppose furthermore that there exists $C>0$ such that $0\leq w_{i,j}\leq C$,  
$\sup_{i,j} \norm{G_{i,j}}_F\leq C$ and $\sup_{i,j} \norm{\widetilde{G}_{i,j}}_F\leq C$. 
Then, if $\inf_i \sum_{j\neq i} w_{i,j}/n>\gamma$ and $\gamma>\eps$, 
we have 
$$
\opnorm{L(W,G)-L(\widetilde{W},\widetilde{G})}\leq \frac{1}{\gamma} C(\eta+\eps)+\frac{\eps}{\gamma (\gamma-\eps)} C^2\;.
$$
\end{lemma}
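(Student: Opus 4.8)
The plan is to reduce Lemma \ref{lemma:approxBoundsLaplacianMultiplicativeErrorAffinity} to Lemma \ref{lemma:approxBoundsLaplacian} by exploiting a scaling invariance of the map $(W,G)\mapsto L(W,G)$: the matrix $L(W,G)$ is unchanged if each (block) row of the affinity matrix is multiplied by an arbitrary positive constant. Concretely, with the positive scalars $\{f_i\}_{i=1}^n$ provided by the hypothesis, define $\widehat{W}$ to be the $n\times n$ matrix with entries $\hat{w}_{i,j}=\tilde{w}_{i,j}/f_i$, so that $\tilde{w}_{i,j}=f_i\hat{w}_{i,j}$. Since $\vdmD$ is block diagonal, the $(i,j)$-block of $L(\widetilde{W},\widetilde{G})$ is $\vdmD_{i,i}^{-1}\vdmS_{i,j}$ with $\vdmS_{i,j}=\tilde{w}_{i,j}\widetilde{G}_{i,j}=f_i\,\hat{w}_{i,j}\widetilde{G}_{i,j}$ and $\vdmD_{i,i}=\big(\sum_{j'\neq i}\tilde{w}_{i,j'}\big)\id_d=f_i\big(\sum_{j'\neq i}\hat{w}_{i,j'}\big)\id_d$; the factor $f_i$ cancels between $\vdmD_{i,i}^{-1}$ and $\vdmS_{i,j}$, and therefore $L(\widetilde{W},\widetilde{G})=L(\widehat{W},\widetilde{G})$. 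Note that the diagonal weights $\tilde{w}_{i,i}$, which enter $\vdmS$ but not $\vdmD$, are rescaled together with the rest of row $i$, so the cancellation goes through verbatim.

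With this identity at hand, it suffices to bound $\opnorm{L(W,G)-L(\widehat{W},\widetilde{G})}$, and for this I verify that the pair $(\widehat{W},\widetilde{G})$ satisfies, relative to $(W,G)$, every hypothesis of Lemma \ref{lemma:approxBoundsLaplacian} with the same constants $C,\eps,\eta,\gamma$. Indeed, $\sup_{i,j}|\hat{w}_{i,j}-w_{i,j}|=\sup_{i,j}\big|\tilde{w}_{i,j}/f_i-w_{i,j}\big|\leq\eps$ is exactly the assumption of the present lemma; $\sup_{i,j}\norm{\widetilde{G}_{i,j}-G_{i,j}}_F\leq\eta$ is assumed; the bounds $0\leq w_{i,j}\leq C$, $\sup_{i,j}\norm{G_{i,j}}_F\leq C$, $\sup_{i,j}\norm{\widetilde{G}_{i,j}}_F\leq C$ are assumed; and $\inf_i\sum_{j\neq i}w_{i,j}/n>\gamma>\eps$ is assumed. (One may also note, if needed to see that the $\vdmD$ built from $\widehat{W}$ is invertible, that $\sum_{j\neq i}\hat{w}_{i,j}\geq\sum_{j\neq i}(w_{i,j}-\eps)>n(\gamma-\eps)>0$.) Applying Lemma \ref{lemma:approxBoundsLaplacian} to $(\widehat{W},\widetilde{G})$ then yields
$$
\opnorm{L(W,G)-L(\widehat{W},\widetilde{G})}\leq \frac{1}{\gamma} C(\eta+\eps)+\frac{\eps}{\gamma (\gamma-\eps)} C^2,
$$
and substituting $L(\widehat{W},\widetilde{G})=L(\widetilde{W},\widetilde{G})$ gives the claimed bound, which is identical to that of Lemma \ref{lemma:approxBoundsLaplacian} as the statement indicates.

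There is essentially no obstacle in this argument: the only thing to get right is the explicit verification of the row-scaling invariance of $L$ (including the treatment of the diagonal weights), after which the result is an immediate instance of Lemma \ref{lemma:approxBoundsLaplacian}. The conceptual content is simply that the row-normalization built into $L=\vdmD^{-1}\vdmS$ automatically absorbs an unknown row-dependent multiplicative constant in the affinity, which is exactly the degree of freedom one expects to be unidentifiable when estimating affinities from noisy data.
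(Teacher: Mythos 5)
Your proof is correct and is essentially identical to the paper's: the paper likewise introduces the rescaled matrix $\widetilde{W_f}$ with entries $\tilde{w}_{i,j}/f_i$, observes that $L(\widetilde{W_f},\widetilde{G})=L(\widetilde{W},\widetilde{G})$ by the row-scaling invariance of $D^{-1}S$, and then invokes Lemma \ref{lemma:approxBoundsLaplacian}. Your write-up simply spells out the cancellation of $f_i$ and the hypothesis checks in more detail.
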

We note that quite remarkably, there are essentially no conditions on $f_i$'s: in particular, $\widetilde{w}_{i,j}$ and $w_{i,j}$ could be of completely different magnitudes. The previous lemma also shows that, for the purpose of understanding the large eigenvalues and eigenvectors of $L(W,G)$, we do not need to estimate $f_i$'s: we can simply use $L(\widetilde{W},\widetilde{G})$, i.e just work with the noisy data. 

\begin{proof}
Let us call $\widetilde{W_f}$ the matrix with scalar entries $\tilde{w}_{i,j}/f_i$. We note simply that 
$$
L(\widetilde{W_f},\widetilde{G})=L(\widetilde{W},\widetilde{G})\;.
$$
The assumptions of Lemma \ref{lemma:approxBoundsLaplacian} apply to $(\widetilde{W_f},\widetilde{G})$ and hence we have 
$$
\opnorm{L(W,G)-L(\widetilde{W_f},\widetilde{G})}\leq \frac{1}{\gamma} C(\eta+\eps)+\frac{\eps}{\gamma (\gamma-\eps)} C^2\;.
$$
But since $L(\widetilde{W_f},\widetilde{G})=L(\widetilde{W},\widetilde{G})$, we also have 
$$
\opnorm{L(W,G)-L(\widetilde{W},\widetilde{G})}\leq \frac{1}{\gamma} C(\eta+\eps)+\frac{\eps}{\gamma (\gamma-\eps)} C^2\;.
$$
\end{proof}

In some situations that will be of interest to us below, it is however, not the case that we can find $f_i$'s such that 
$$
\exists \{f_i\}_{i=1}^n\;, f_i>0 :\, \sup_{i,j} \left|\frac{\tilde{w}_{i,j}}{f_i}-w_{i,j}\right|\leq \eps\;.
$$
Rather, this approximation is possible only when $i\neq j$, yielding the condition 
$$
\forall i, \exists f_i>0 :\, \sup_{i\neq j} \left|\frac{\tilde{w}_{i,j}}{f_i}-w_{i,j}\right|\leq \eps\;.
$$

This apparently minor difference turns out to have significant consequences, both practical and theoretical. We propose in the following lemma to modify the standard way of the computing the CGL matrix to handle this more general case.  

\begin{lemma}\label{lemma:approxCGLMatGeneralModif}
We work under the same setup as in Lemma \ref{lemma:approxBoundsLaplacian} and with the same notations. 
We now assume that multiplicative approximations of the weights is possible only on the off-diagonal elements of our weight matrix:
$$
\exists \{f_i\}_{i=1}^n\;, f_i>0 : \sup_{i\neq j} \left|\frac{\tilde{w}_{i,j}}{f_i}-w_{i,j}\right|\leq \eps \;, \text{ and } \sup_{i,j} \norm{\widetilde{G}_{i,j}-G_{i,j}}_F \leq \eta\;.
$$
Suppose furthermore that there exists $C>0$ such that $0\leq w_{i,j}\leq C$,  
$\sup_{i,j} \norm{G_{i,j}}_F\leq C$, and $\sup_{i,j} \norm{\widetilde{G}_{i,j}}_F\leq C$. 
Then, if $\inf_i \sum_{j\neq i} w_{i,j}/n>\gamma$ and $\gamma>\eps$, 
we have 
$$
\opnorm{L_0(W,G)-L_0(\widetilde{W},\widetilde{G})}\leq \frac{1}{\gamma} C(\eta+\eps)+\frac{\eps}{\gamma (\gamma-\eps)} C^2\;,
$$
and 
$$
\opnorm{L(W,G)-L_0(\widetilde{W},\widetilde{G})}\leq 
\frac{1}{\gamma} C(\eta+\eps)+\frac{\eps}{\gamma (\gamma-\eps)} C^2+\frac{C^2}{n\gamma }\;.
$$
\end{lemma}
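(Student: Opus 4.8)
The plan is to reduce both bounds to Lemma~\ref{lemma:approxBoundsLaplacian}, using the elementary but crucial fact that $L_0(W,G)$ does not see the diagonal entries of the weight matrix at all, so that the per-row rescaling factors $f_i$ can be absorbed without ever touching those diagonal entries. This is precisely why the modified Laplacian is the right object here.

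For the first inequality, I would follow the idea of the proof of Lemma~\ref{lemma:approxBoundsLaplacianMultiplicativeErrorAffinity} and let $\widetilde{W_f}$ be the matrix with entries $\tilde w_{i,j}/f_i$. The key algebraic observation is that
\[
L_0(\widetilde{W_f},\widetilde G)=L_0(\widetilde{W},\widetilde G):
\]
the $(i,i)$ block of the degree matrix associated with $\widetilde{W_f}$ is $\big(\sum_{j\ne i}\tilde w_{i,j}/f_i\big)\id=f_i^{-1}\big(\sum_{j\ne i}\tilde w_{i,j}\big)\id$, while the $(i,j)$ block of the off-diagonal part is $(\tilde w_{i,j}/f_i)\widetilde G_{i,j}$, so the scalar $f_i^{-1}$ cancels when one forms $D^{-1}S$. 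Since $L_0(W,G)=L(W\circ 1_{i\ne j},G)$ and $L_0(\widetilde{W_f},\widetilde G)=L(\widetilde{W_f}\circ 1_{i\ne j},\widetilde G)$, I would then apply Lemma~\ref{lemma:approxBoundsLaplacian} to the pairs $(W\circ 1_{i\ne j},G)$ and $(\widetilde{W_f}\circ 1_{i\ne j},\widetilde G)$. Its hypotheses are immediate to check: the entrywise perturbation of the weights is $\le\eps$ (it is $0$ on the diagonal and it is the assumed off-diagonal bound elsewhere), the bound $C$ on the $w_{i,j}$ and on the $G$, $\widetilde G$ blocks is inherited after zeroing the diagonal, and $\inf_i\sum_{j\ne i}w_{i,j}1_{i\ne j}/n=\inf_i\sum_{j\ne i}w_{i,j}/n>\gamma$ is unchanged; in particular one never needs a bound on the entries of $\widetilde{W_f}$ itself. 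This yields $\opnorm{L_0(W,G)-L_0(\widetilde{W_f},\widetilde G)}\le\frac1\gamma C(\eta+\eps)+\frac{\eps}{\gamma(\gamma-\eps)}C^2$, and the first inequality follows from $L_0(\widetilde{W_f},\widetilde G)=L_0(\widetilde W,\widetilde G)$.

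For the second inequality, I would split
\[
L(W,G)-L_0(\widetilde W,\widetilde G)=\big(L(W,G)-L_0(W,G)\big)+\big(L_0(W,G)-L_0(\widetilde W,\widetilde G)\big),
\]
bound the second bracket by the first inequality, and compute the first bracket explicitly. Since $L(W,G)$ and $L_0(W,G)$ share the same degree matrix $D$ (which only involves off-diagonal weights), their difference equals $D^{-1}\Delta$ with $\Delta$ block diagonal with $(i,i)$ block $w_{i,i}G_{i,i}$; hence $D^{-1}\Delta$ is block diagonal with $(i,i)$ block $\big(\sum_{j\ne i}w_{i,j}\big)^{-1}w_{i,i}G_{i,i}$, whose operator norm is at most $\frac{w_{i,i}}{\sum_{j\ne i}w_{i,j}}\opnorm{G_{i,i}}\le\frac{C\cdot C}{n\gamma}$, using $w_{i,i}\le C$, $\sum_{j\ne i}w_{i,j}>n\gamma$, and $\opnorm{G_{i,i}}\le\norm{G_{i,i}}_F\le C$. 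As the operator norm of a block-diagonal matrix is the maximum of the operator norms of its blocks, $\opnorm{L(W,G)-L_0(W,G)}\le C^2/(n\gamma)$, and the triangle inequality finishes the proof.

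All steps are routine and I do not expect a genuine obstacle. The only point that deserves a moment of care is the identity $L_0(\widetilde{W_f},\widetilde G)=L_0(\widetilde W,\widetilde G)$ — that the row rescaling is invisible to $L_0$ — together with the observation that applying Lemma~\ref{lemma:approxBoundsLaplacian} to the diagonal-zeroed matrices requires no control whatsoever on the $f_i$'s or on the magnitudes of the rescaled weights, which is exactly the phenomenon the lemma is meant to capture.
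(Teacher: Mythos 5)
Your proposal is correct and follows essentially the same route as the paper's proof: zero the diagonal, absorb the row rescaling $f_i$ via the identity $L_0(\widetilde{W_f},\widetilde G)=L_0(\widetilde W,\widetilde G)$, invoke the additive approximation lemma on the diagonal-zeroed pair, and separately bound $\opnorm{L(W,G)-L_0(W,G)}$ by $C^2/(n\gamma)$ via the explicit block-diagonal difference. The only cosmetic difference is that you unroll Lemma~\ref{lemma:approxBoundsLaplacianMultiplicativeErrorAffinity} and apply Lemma~\ref{lemma:approxBoundsLaplacian} directly to the rescaled matrices, whereas the paper cites the multiplicative lemma as a black box; the substance is identical.
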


\textbf{Comment:} Concretely, this lemma means that if we do not include the block diagonal terms in the computation of the CGL obtained from our ``noisy data'', i.e $(\widetilde{W},\widetilde{G})$, we will get a matrix that is very close in spectral norm to the CGL computed from the ``clean data'', i.e $(W,G)$. The significance of this result lies in the fact that recent work in applied mathematics has proposed to use the large eigenvalues and eigenvectors of $L(W,G)$ for various data analytic tasks, such as the estimation of local geodesic distances when the data is thought to be sampled from an unknown manifold. 

What our result shows is that even when $f_i$ are arbitrarily large, which we can think of as the situation where the signal to noise ratio in $\widetilde{W}$ is basically 0, working with $L_0(\widetilde{W},\widetilde{G})$ will allow us to harness the power of these recently developed tools. Naturally, working with $(\widetilde{W},\widetilde{G})$ is a much more realistic assumption than working with $(W,G)$ since we expect all our measurements to be somewhat noisy whereas results based on $(W,G)$ essentially assume that there is no measurement error in the dataset. 

\begin{proof}
Recall also that the computation of the $D$ matrix does not involve the diagonal weights. Therefore, 
$$
L(W,G)=L_0(W,G)+D^{-1} \Delta(\{w_{i,i}\},G_{i,i})\;,
$$
where $\Delta(\{w_{i,i}\},G_{i,i})$ is the block diagonal matrix with $(i,i)$ block diagonal $w_{i,i}G_{i,i}$, and $D^{-1} \Delta(\{w_{i,i}\},G_{i,i})$ is a block diagonal matrix with $(i,i)$ block diagonal 
$$
\frac{w_{i,i}}{\sum_{j\neq i} w_{i,j}} G_{i,i}\;.
$$
This implies that 
$$
\opnorm{L(W,G)-L_0(W,G)}\leq \sup_i \frac{w_{i,i}}{\sum_{j\neq i} w_{i,j}} \opnorm{G_{i,i}}\;.
$$
Our assumptions imply that $\sum_{j\neq i} w_{i,j}>\gamma n$, $\sup_{i}w_{i,i}\leq C$ and $\opnorm{G_{i,i}}\leq \norm{G_{i,i}}_F\leq C$. Hence, 
$$
\opnorm{L(W,G)-L_0(W,G)}\leq \frac{C^2}{n\gamma}\;.
$$
We still have $L(\widetilde{W_f},\widetilde{G})=L(\widetilde{W},\widetilde{G})$	
and of course
$$
L_0(\widetilde{W_f},\widetilde{G})=L_0(\widetilde{W},\widetilde{G})\;.
$$

Note further that the assumptions of Lemma \ref{lemma:approxBoundsLaplacianMultiplicativeErrorAffinity} apply to the matrices $(W\circ 1_{i\neq j}, G)$ and $(\widetilde{W}\circ 1_{i\neq j},\widetilde{G})$. Indeed, the off-diagonal conditions on the weights are the assumptions we are working under. The diagonal conditions on the weights in Lemma \ref{lemma:approxBoundsLaplacianMultiplicativeErrorAffinity} are trivially satisfied here since both $W\circ 1_{i\neq j}$ and $\widetilde{W}\circ 1_{i\neq j}$ have diagonal entries equal to 0. Hence, Lemma \ref{lemma:approxBoundsLaplacianMultiplicativeErrorAffinity} gives 
$$
\opnorm{L_0(W,G)-L_0(\widetilde{W},\widetilde{G})}\leq \frac{1}{\gamma} C(\eta+\eps)+\frac{\eps}{\gamma (\gamma-\eps)} C^2\;.
$$
Using Weyl's inequality (see \cite{bhatia97}) and our bound on $\opnorm{L(W,G)-L_0(W,G)}$, we therefore get 
$$
\opnorm{L(W,G)-L_0(\widetilde{W},\widetilde{G})}\leq \frac{1}{\gamma} C(\eta+\eps)+\frac{\eps}{\gamma (\gamma-\eps)} C^2+\frac{C^2}{n\gamma }\;.
$$

\end{proof}

We now turn to the analysis of a specific algorithm, the class averaging algorithm in the cryo-EM problem, with a broadly accepted model of noise contamination to demonstrate the robustness of CGL-like algorithms.

\subsection{Impact of noise on the rotationally invariant distance}
\label{subsec:impactNoiseVDMProcedure}
We assume that we observe noisy versions of  the $k$-dimensional images/objects, $k\geq 2$, we are interested in. If the images in the - unobserved - clean dataset are called $\{\cleanImage_i\}_{i=1}^n$, we observe 
$$
\noisyImage_i=\cleanImage_i+ N_i\;.
$$
Here $\{N_i\}_{i=1}^n$ are pure-noise images/objects. 
Naturally, after discretization, the images/objects we consider are just data vectors of dimension $\numberPixels$ -- we view $\cleanImage_i$ and $N_i$ as vectors in $\mathbb{R}^\numberPixels$. In other words, for a $k$-dim image, we sample $p$ points from the domain $\RR^k$, which is denoted as $\mathfrak{X}:=\{\mathsf{x}_i\}_{i=1}^p\subset \RR^k$ and called the {\it sampling grid}, and the image is discretized accordingly on these points.
We also assume that the random variables $N_i$'s, $i=1,\ldots,n$, are independent. 

\subsubsection{Distance measurement between pairs of images}
We start from a general definition. Take a set of linear transforms $\setOfTransformskDims\subset O(k)$. Consider the following measurement between two objects/images, $d_{ij}\geq 0$, with  
$$
d_{ij}^2=\inf_{\mathtt{O}\in \setOfTransformskDims }\norm{\noisyImage_i-\mathtt{O}\circ\noisyImage_j}^2_2\;,
$$
where $\circ$ means that the transform is acting on the pixels. For example, in the continuous setup where $I_j$ is replaced by $f_j \in L^2(\RR^k)$, given $\mathtt{O}\in SO(k)$, we have $\mathtt{O}\circ f_j(x):=f_j(\mathtt{O}^{-1} x)$ for all $x\in\RR^k$. When $\setOfTransformskDims=SO(k)$, $d_{ij}$ is called {\it the rotationally invariant distance (RID)}. 

In the discrete setup of interest in this paper, we assume that $\mathfrak{X}=\mathtt{O}^{-1}\mathfrak{X}$ for all $\mathtt{O}\in \setOfTransformskDims$; that is, the linear transform is {\it exact} (with respect to the grid $\mathfrak{X}$), in that it maps the sampling grid onto itself. For concreteness, here is an example of sampling grid and associated exact linear transforms. Let $k=2$ and take the sampling grid to be the polar coordinates grid. Since we are in dimension 2, we pick $m$ rays of length $1$ at angles $2\pi k/m$, $k=0,\ldots,m-1$ and have $l$ equally spaced points on each of those rays. We consider $I_i$ to be the discretization of the function $f_i \in L^2(\mathbb{R}^2)$ which is compactly supported inside the unit disk, at the polar coordinate grid. The set $\setOfTransforms^{(2)}$ consisting of elements of $SO(2)$ with angles $\theta_k=2\pi\frac{k}{m}$, where $k=1,\ldots,m$, is thus exact and associated to the polar coordinate grid.

The discretization and notation merit further discussion. As a linear transform of the domain $\RR^k$, $\mathtt{O}\in \setOfTransformskDims$ can be represented by a $k\times k$ matrix. On the other hand, in the discretized setup we consider here, we can map $\setOfTransformskDims$ to a set $\setOfTransforms$ of $p\times p$ matrices $\transform$ which acts on the discretized images $\noisyImage_j$. These images are viewed as a set of $p$-dim vectors, denoted as $\noisyImage_j^\vee$, and $\transform$ acts on a ``flattened'' or ``vectorized'' (i.e 1-dimensional) version of the $k$-dimensional object of interest. Note that to each transform $\mathtt{O}$ there corresponds a unique $p\times p$ matrix $\transform$. In the following, we will use $\mathtt{O}$ to denote the transform acting on the pixels, and use $\transform$ to mean its companion matrix acting on the vectorized version of the object we are interested in. A simple but very important observation is that 
$$
(\mathtt{O}\circ I_i)^\vee=O I_i^\vee\;.
$$
In other words, we will have $\inf_{\mathtt{O}\in \setOfTransformskDims }\norm{\noisyImage_i-\mathtt{O}\circ\noisyImage_j}=\inf_{\transform\in \setOfTransforms }\norm{\noisyImage^\vee_i-\transform\noisyImage^\vee_j}$. To simplify the notation, when it is clear from the context, we will use $\noisyImage_j$ to mean both the discretized object of interest and its vectorized version.

In what follows, we assume that ${\setOfTransforms}$ always contains $\id_p$. We study the impact of noise on $d_{ij}$ through a uniform approximation argument. 
Let us call for $\transform \in \setOfTransforms$,
\begin{align*}
\dijNoisy^2(\transform):=\norm{\noisyImage_i^\vee-\transform \noisyImage_j^\vee}^2 \;, \text{ and }
\dijClean^2(\transform):=\norm{\cleanImage_i^\vee-\transform \cleanImage_j^\vee}^2\;.
\end{align*}
Essentially we will show that, when $\setOfTransforms$ contains only orthogonal matrices and is not ``too large", 
$$
\sup_{\transform \in \setOfTransforms }\sup_{i\neq j} |\dijNoisy^2(\transform)-\dijClean^2(\transform)-f(i,j)|=\lo_P(1)\;,
$$
where $f(i,j)$ does not depend on $\transform$.
Our approximations will in fact be much more precise than this. But we will be able to conclude that in these circumstances, 
$$
\sup_{i\neq j}\left|\inf_{\transform \in \setOfTransforms }\dijNoisy^2(\transform)-\inf_{\transform \in \setOfTransforms }\dijClean^2(\transform)-f(i,j)\right|=\lo_P(1)\;.
$$

We have the following theorem for any given set of transforms $\setOfTransforms$. 

\begin{theorem}\label{thm:controlApproxUnifIndicesAndTransfo}
Suppose that for $1\leq i\leq n$, $N_i$ are independent, with $N_i^\vee\sim {\cal N}(0,\Sigma_i)$. 
Call $\mathsf{t_p}:=\sup_i \sup_{\transform \in \setOfTransforms }\sqrt{\trace{(\transform\Sigma_i\transform\trsp)^2}}$ and $\mathsf{s}_p:=\sup_{1\leq i \leq n}\sup_{\transform \in \setOfTransforms }\sqrt{\opnorm{\transform\Sigma_i\transform\trsp}}$. 
Then, we have 
\begin{align*}
&\sup_{\transform \in \setOfTransforms }\sup_{i\neq j} |\dijNoisy^2(\transform)-\dijClean^2(\transform)-\trace{\Sigma_i+\transform\Sigma_j\transform\trsp}| \notag\\
&=\bO_P\left(
\sqrt{\log [\card{\setOfTransforms} n^2]} \Big(\mathsf{t_p}+\mathsf{s}_p \sup_{i,\transform \in \setOfTransforms} \norm{\transform \cleanImage_i\transform\trsp}\Big)+\log [\card{\setOfTransforms} n^2]\mathsf{s}_p^2\right)\;.
\end{align*}
\end{theorem}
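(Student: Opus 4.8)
The plan is to expand $\dijNoisy^2(\transform)$ about $\dijClean^2(\transform)$, split off one term that is linear in the noise and one that is a centered quadratic form in the noise, and control each of these uniformly over the finite index set $\{(i,j,\transform):i\neq j,\ \transform\in\setOfTransforms\}$, whose cardinality is at most $\card{\setOfTransforms}\,n^2$. Writing $\noisyImage_i^\vee=\cleanImage_i^\vee+N_i^\vee$ and expanding the square gives
\[
\dijNoisy^2(\transform)-\dijClean^2(\transform)=2\,\langle\,\cleanImage_i^\vee-\transform\cleanImage_j^\vee,\ N_i^\vee-\transform N_j^\vee\,\rangle+\norm{N_i^\vee-\transform N_j^\vee}^2 .
\]
Since $N_i$ and $N_j$ are independent and centered, $\vE\norm{N_i^\vee-\transform N_j^\vee}^2=\trace{\Sigma_i}+\trace{\transform\Sigma_j\transform\trsp}=\trace{\Sigma_i+\transform\Sigma_j\transform\trsp}$ (and, since $\setOfTransforms\subset O(k)$, this also equals $\trace{\Sigma_i+\Sigma_j}$, confirming that the centering does not depend on $\transform$). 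So after subtracting this centering the quantity to bound is the supremum over the index set of $|2L_{ij}(\transform)+Q_{ij}(\transform)|$, where $L_{ij}(\transform):=\langle\cleanImage_i^\vee-\transform\cleanImage_j^\vee,\ N_i^\vee-\transform N_j^\vee\rangle$ and $Q_{ij}(\transform):=\norm{N_i^\vee-\transform N_j^\vee}^2-\trace{\Sigma_i+\transform\Sigma_j\transform\trsp}$, and I would treat the two pieces separately.

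\textbf{The linear term.} For fixed $i\neq j$ and $\transform$, $L_{ij}(\transform)$ is a centered Gaussian: with $v:=\cleanImage_i^\vee-\transform\cleanImage_j^\vee$ deterministic, $L_{ij}(\transform)=\langle v,N_i^\vee\rangle-\langle\transform\trsp v,N_j^\vee\rangle$ has variance $v\trsp\Sigma_i v+v\trsp\transform\Sigma_j\transform\trsp v\leq 2\mathsf{s}_p^2\norm{v}^2$ (using $\id_p\in\setOfTransforms$ so that $\opnorm{\Sigma_i}\leq\mathsf{s}_p^2$), while $\norm{v}\leq 2\sup_{i,\transform\in\setOfTransforms}\norm{\transform\cleanImage_i\transform\trsp}$. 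A Gaussian maximal inequality over the $\leq\card{\setOfTransforms}n^2$ indices then gives $\sup_{i\neq j,\,\transform}|L_{ij}(\transform)|=\bO_P\big(\mathsf{s}_p\,(\sup_{i,\transform}\norm{\transform\cleanImage_i\transform\trsp})\,\sqrt{\log[\card{\setOfTransforms}n^2]}\big)$, which produces the $\mathsf{s}_p$-part of the announced bound.

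\textbf{The quadratic term.} For fixed $i\neq j$ and $\transform$, $W:=N_i^\vee-\transform N_j^\vee\sim\mathcal N(0,\Omega)$ with $\Omega=\Sigma_i+\transform\Sigma_j\transform\trsp$, so $Q_{ij}(\transform)=\norm{W}^2-\trace\Omega$ is a centered weighted $\chi^2$; subadditivity of the Frobenius and operator norms (again using $\id_p\in\setOfTransforms$) gives $\norm{\Omega}_F\leq 2\mathsf{t_p}$ and $\opnorm{\Omega}\leq 2\mathsf{s}_p^2$. A Bernstein-type tail bound for Gaussian quadratic forms (e.g.\ Laurent--Massart) gives, for each index, $\bP(|Q_{ij}(\transform)|\geq c_1\mathsf{t_p}\sqrt{x}+c_2\mathsf{s}_p^2x)\leq 2e^{-x}$ with universal $c_1,c_2$; taking $x\asymp\log[\card{\setOfTransforms}n^2]$ and a union bound over the $\leq\card{\setOfTransforms}n^2$ indices yields $\sup_{i\neq j,\,\transform}|Q_{ij}(\transform)|=\bO_P\big(\mathsf{t_p}\sqrt{\log[\card{\setOfTransforms}n^2]}+\mathsf{s}_p^2\log[\card{\setOfTransforms}n^2]\big)$. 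Combining the two bounds via the triangle inequality gives exactly the stated rate.

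\textbf{Main obstacle.} Conceptually this is a routine Gaussian-concentration argument, and I do not expect a genuine obstruction; the delicate points are bookkeeping. One must keep track in the quadratic step of the two regimes of the Bernstein bound so that $\mathsf{t_p}$ is paired with $\sqrt{\log}$ and $\mathsf{s}_p^2$ with $\log$, and one must check throughout that all the constants (in the Gaussian maximal inequality and in the quadratic-form tail bound) are universal, so that the estimates hold uniformly in $p$ and $n$ and the final statement is legitimately $\bO_P$ rather than merely bounded for each fixed $p,n$.
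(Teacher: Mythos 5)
Your proof is correct and follows essentially the same route as the paper's: expand the square into a centered Gaussian quadratic form plus a linear cross term, apply the Laurent--Massart tail bound with a union bound over the $\card{\setOfTransforms}n^2$ indices to the former (using $(A+B)^2\preceq 2(A^2+B^2)$, equivalently subadditivity of the Frobenius norm, to reduce to $\mathsf{t_p}$ and $\mathsf{s}_p$), and a Gaussian maximal inequality to the latter. The only cosmetic difference is that your variance formula for the linear term correctly carries a plus sign where the paper's display has a sign typo.
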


\begin{proof}
We first note that $N_i^\vee-\transform N_j^\vee\sim {\cal N}(0,\Sigma_i+\transform \Sigma_j \transform\trsp)$. 
Applying Lemma \ref{lemma:controlSupQuadFormsLaurentMassart} to $\norm{N_i^\vee-\transform N_j^\vee}^2$ with $Q_i=\id$, we get 
\begin{align*}
&\sup_{\transform\in\setOfTransforms} \sup_{i\neq j} \left|\norm{N_i^\vee-\transform N_j^\vee}^2-\trace{\Sigma_i+\transform\Sigma_j\transform\trsp}\right|=\\
&\bO_P\left(\sqrt{\log [\card{\setOfTransforms} n^2]} 
\sup_{i,j,\transform} \left[\sqrt{\trace{(\Sigma_i+\transform \Sigma_j \transform\trsp)^2}}\right]+\log [\card{\setOfTransforms} n^2] \sup_{i,j,\transform} \opnorm{(\Sigma_i+\transform \Sigma_j \transform\trsp)}\right)\;.
\end{align*}
Of course, using the fact that for positive semi-definite matrices, $(A+B)^2\preceq 2(A^2+B^2)$ in the positive-semidefinite order, we have
$$
\trace{(\Sigma_i+\transform \Sigma_j \transform\trsp)^2}\leq 2\trace{\Sigma_i^2+[\transform\Sigma_j\transform\trsp]^2} \text\;.
$$
Hence,
\begin{align*}
&\sup_{\transform\in\setOfTransforms} \sup_{i\neq j} \left|\norm{N_i^\vee-\transform N_j^\vee}^2-\trace{\Sigma_i+\transform\Sigma_j\transform\trsp}\right|=\\
&\quad\bO_P\left(\sqrt{\log [\card{\setOfTransforms} n^2]} 
\sup_{i} \sup_{\transform\in\setOfTransforms}\sqrt{\trace{\left[\transform\Sigma_i\transform\trsp\right]^2}}+\log [\card{\setOfTransforms} n^2] \sup_i\sup_{\transform\in\setOfTransforms} \opnorm{\transform\Sigma_i\transform\trsp}\right)\;.
\end{align*}
We also note that 
$$
(\cleanImage_i^\vee-\transform \cleanImage_j^\vee)\trsp (N_i^\vee-\transform N_j^\vee)\sim {\cal N}(0,\gamma_{i,j,\transform}^2)\;,
$$
where 
$$
\gamma_{i,j,\transform}^2=(\cleanImage_i^\vee-\transform \cleanImage_j^\vee)\trsp (\Sigma_i-\transform\Sigma_j \transform\trsp)(\cleanImage_i^\vee-\transform \cleanImage_j^\vee)\;.
$$
We note that 
$$
\gamma_{i,j,\transform}^2\leq 2\left(\norm{\cleanImage_i^\vee}^2+\norm{\transform\cleanImage_j^\vee\transform\trsp}^2\right)(\opnorm{\Sigma_i}+\opnorm{\transform\Sigma_j\transform\trsp})\leq 8\sup_{i,\transform \in \setOfTransforms} \opnorm{\transform\Sigma_i\transform\trsp} \sup_{i,\transform \in \setOfTransforms} \norm{\transform\cleanImage_i^\vee\transform\trsp}^2\;.
$$
Recall also that it is well known that if $Z_1,\ldots, Z_N$ are ${\cal N}(0,\gamma_i^2)$ random variables, 
$$
\sup_{1\leq k \leq N} |Z_k|=\bO_P(\sqrt{\log N} \sup_k \gamma_k)\;.
$$
This result can be obtained by a simple union bound argument.
In our case, it means that 
$$
\sup_{\transform \in \setOfTransforms} \sup_{i\neq j}|(\cleanImage_i^\vee-\transform \cleanImage_j^\vee)\trsp (N_i^\vee-\transform N_j^\vee)|=\bO_P\left(
\sqrt{\log [\card{\setOfTransforms} n^2]} \sup_{i,\transform \in \setOfTransforms} \sqrt{\opnorm{\transform\Sigma_i\transform\trsp}} \sup_{i,\transform \in \setOfTransforms} \norm{\transform\cleanImage_i^\vee\transform\trsp}
\right)\;.
$$
\end{proof}

In light of the previous theorem, we have the following proposition.  
\begin{proposition}\label{prop:controlApproxdijNoisy}
Suppose that for all $1\leq i \leq n$ and $\transform \in \setOfTransforms$, $\opnorm{\transform\Sigma_i\transform\trsp}\leq \sigma^2_p$, $\sqrt{\trace{[\transform\Sigma_i\transform\trsp]^2}/p}\leq s_p^2$, and $\norm{\transform\cleanImage_i^\vee\transform\trsp}\leq K$, where $K$ is a constant independent of $p$. Then, 
\begin{align}
\sup_{\transform \in \setOfTransforms }\sup_{i\neq j} |\dijNoisy^2(\transform)-\dijClean^2(\transform)-\trace{\Sigma_i+\transform\Sigma_j\transform\trsp}|=\bO_P(u_{n,p})\;.\label{definition:unp}
\end{align}
where $u_{n,p}:=\sqrt{\log [\card{\setOfTransforms} n^2]} (\sqrt{p}s^2_p+K\sigma_p)+\log [\card{\setOfTransforms} n^2]\sigma_p^2$. \\
It follows that, if $\sqrt{\log [\card{\setOfTransforms} n^2]} \max(\sqrt{p}s^2_p,\sigma_p)\tendsto 0$, and $\setOfTransforms$ contains only orthogonal matrices, 
\begin{align*}
\sup_{\transform \in \setOfTransforms }\sup_{i\neq j} |\dijNoisy^2(\transform)-\dijClean^2(\transform)-\trace{\Sigma_i+\Sigma_j}|=\bO_P(u_{n,p})\;=\lo_P(1)\;.
\end{align*}

Furthermore, in this case, 
$$
\sup_{i\neq j}\left|d_{ij,\text{noisy}}^2-d_{ij,\text{clean}}^2-\trace{\Sigma_i+\Sigma_j}\right|=\lo_P(1)\;,
$$
where
\begin{align*}
d_{ij,\text{noisy}}^2:=\inf_{\transform\in\setOfTransforms} \norm{\noisyImage_i^\vee-\transform \noisyImage_j^\vee}^2\;,\quad
d_{ij,\text{clean}}^2:=\inf_{\transform\in\setOfTransforms} \norm{\cleanImage_i^\vee-\transform \cleanImage_j^\vee}^2\;.
\end{align*}
\end{proposition}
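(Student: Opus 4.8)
The plan is to read the proposition off Theorem~\ref{thm:controlApproxUnifIndicesAndTransfo}: first translate the present hypotheses into the two noise functionals appearing there, then use orthogonality of the transforms to remove the dependence on $\transform$ in the centering term, and finally pass from a control of $\dijNoisy^2(\transform)-\dijClean^2(\transform)$ that is \emph{uniform in} $\transform$ to a control of the two infima $d_{ij,\text{noisy}}^2$ and $d_{ij,\text{clean}}^2$. For the first step, I would bound the quantities $\mathsf{t_p}$ and $\mathsf{s_p}$ of the theorem: the hypothesis $\sqrt{\trace{[\transform\Sigma_i\transform\trsp]^2}/p}\leq s_p^2$ gives $\mathsf{t_p}=\sup_i\sup_{\transform\in\setOfTransforms}\sqrt{\trace{(\transform\Sigma_i\transform\trsp)^2}}\leq\sqrt{p}\,s_p^2$, the hypothesis $\opnorm{\transform\Sigma_i\transform\trsp}\leq\sigma_p^2$ gives $\mathsf{s_p}\leq\sigma_p$, and by assumption $\sup_{i,\transform\in\setOfTransforms}\norm{\transform\cleanImage_i^\vee\transform\trsp}\leq K$. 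Substituting these three bounds into the conclusion of the theorem produces exactly $\bO_P(u_{n,p})$ with $u_{n,p}$ as in \eqref{definition:unp}, which is the first display of the proposition.

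Next, when every $\transform\in\setOfTransforms$ is orthogonal one has $\transform\trsp\transform=\id_p$, so $\trace{\transform\Sigma_j\transform\trsp}=\trace{\Sigma_j}$ and the centering term $\trace{\Sigma_i+\transform\Sigma_j\transform\trsp}$ collapses to $\trace{\Sigma_i+\Sigma_j}$, which no longer depends on $\transform$; this gives the second display. Under the scaling $\sqrt{\log[\card{\setOfTransforms}n^2]}\max(\sqrt{p}s_p^2,\sigma_p)\tendsto0$, each of the three summands of $u_{n,p}$ tends to $0$ — the last one because $\log[\card{\setOfTransforms}n^2]\,\sigma_p^2=\big(\sqrt{\log[\card{\setOfTransforms}n^2]}\,\sigma_p\big)^2$ and the middle one because $K$ is a fixed constant — so $u_{n,p}\tendsto0$, and since $u_{n,p}$ is deterministic this means $\bO_P(u_{n,p})=\lo_P(1)$.

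For the statement about the infima, I would set $\epsilon_{n,p}:=\sup_{\transform\in\setOfTransforms}\sup_{i\neq j}\big|\dijNoisy^2(\transform)-\dijClean^2(\transform)-\trace{\Sigma_i+\Sigma_j}\big|$, which is $\lo_P(1)$ by the previous steps. Then for every $\transform\in\setOfTransforms$ and every $i\neq j$,
$$
\dijClean^2(\transform)+\trace{\Sigma_i+\Sigma_j}-\epsilon_{n,p}\ \leq\ \dijNoisy^2(\transform)\ \leq\ \dijClean^2(\transform)+\trace{\Sigma_i+\Sigma_j}+\epsilon_{n,p}\;.
$$
Because the additive terms $\trace{\Sigma_i+\Sigma_j}\pm\epsilon_{n,p}$ do not depend on $\transform$, taking $\inf_{\transform\in\setOfTransforms}$ across all three members preserves the inequalities and yields $\big|d_{ij,\text{noisy}}^2-d_{ij,\text{clean}}^2-\trace{\Sigma_i+\Sigma_j}\big|\leq\epsilon_{n,p}$ for every $i\neq j$; taking $\sup_{i\neq j}$ finishes the argument.

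The heavy lifting — Gaussian concentration through the Laurent--Massart-type estimate, together with the union bound over $\setOfTransforms\times\{i\neq j\}$ that produces the $\sqrt{\log[\card{\setOfTransforms}n^2]}$ factor — is already contained in Theorem~\ref{thm:controlApproxUnifIndicesAndTransfo}, so I do not anticipate a genuine obstacle here; the points that need care are only (i) keeping the remainder $\epsilon_{n,p}$ uniform over $\transform$ rather than valid merely for each fixed $\transform$, so that it survives being pulled through the two infima in the last step, and (ii) noting that the simplification $\trace{\transform\Sigma_j\transform\trsp}=\trace{\Sigma_j}$ genuinely requires $\transform$ to be orthogonal — this is precisely where the hypothesis ``$\setOfTransforms$ contains only orthogonal matrices'' is used.
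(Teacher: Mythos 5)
Your proposal is correct and follows essentially the same route as the paper: the first two displays are read off Theorem \ref{thm:controlApproxUnifIndicesAndTransfo} by bounding $\mathsf{t_p}\leq\sqrt{p}\,s_p^2$, $\mathsf{s_p}\leq\sigma_p$ and using orthogonality to collapse $\trace{\transform\Sigma_j\transform\trsp}$ to $\trace{\Sigma_j}$, and the passage to the infima is the paper's observation that $|\inf F-\inf G|\leq\sup|F-G|$, which your sandwich argument reproduces exactly (the uniformity in $\transform$ you flag as point (i) is indeed the only thing that makes this step work). No gaps.
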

In light of the previous proposition, the following set of assumptions is natural:
\paragraph{Assumption G1 : }$\forall i, \transform \in \setOfTransforms$, $\opnorm{\transform\Sigma_i\transform\trsp}\leq \sigma^2_p$, $\sqrt{\trace{[\transform\Sigma_i\transform\trsp]^2}/p}\leq s_p^2$, and $\norm{\transform\cleanImage_i^\vee\transform\trsp}\leq K$, where $K$ is a constant independent of $p$. Furthermore, $\sqrt{\log [\card{\setOfTransforms} n^2]} \max(\sqrt{p}s^2_p,\sigma_p)\tendsto 0$ and hence $u_{n,p}\tendsto 0$.

We refer the reader to Proposition \ref{prop:cardinalSetOfDiscretizedRotations} on page \pageref{prop:cardinalSetOfDiscretizedRotations} for a bound on $\card{\setOfTransforms}$ that is relevant to the class averaging algorithm in the cryo-EM problem. 

\begin{proof}[Proof of Proposition \ref{prop:controlApproxdijNoisy}]
The first two statements are immediate consequences of Theorem \ref{thm:controlApproxUnifIndicesAndTransfo}. For the second one, we use the fact that 
since $\transform \in \setOfTransforms$ is orthogonal, $\trace{\transform \Sigma_j\transform \trsp}=\trace{\Sigma_j}$.

Now, if $F$ and $G$ are two functions, we clearly have $|\inf F(x)-\inf G(x)|\leq \sup |G(x)-F(x)|$. Indeed, $\forall x$, $F(x)\leq G(x)+\sup |G(x)-F(x)|$. Hence, for all $x$, 
$$
\inf_x F(x)\leq F(x)\leq  G(x)+\sup |G(x)-F(x)|\;,
$$
and we conclude by taking $\inf$ in the right-hand side. The inequality is proved similarly in the other direction. 
The results of Theorem \ref{thm:controlApproxUnifIndicesAndTransfo} therefore show that 
$$
\sup_{i\neq j}\left|d_{ij,\text{noisy}}^2-d_{ij,\text{clean}}^2-\trace{\Sigma_i+\Sigma_j}\right|=\bO_P(\sqrt{\log [\card{\setOfTransforms} n^2]} (\sqrt{p}s^2_p+K\sigma_p)+\log [\card{\setOfTransforms} n^2]\sigma_p^2)
$$
and we get the announced conclusions under our assumptions. 
\end{proof}

We now present two examples to show that our assumptions are quite weak and prove that the algorithms we are studying can tolerate very large amount of noise.

\paragraph{Magnitude of noise: First example}  Assume that $N_i^\vee\sim p^{-(1/4+\eps)}{\cal N}(0,\id_p)$, where $\eps>0$. In this case, $\norm{N_i^\vee}\sim p^{1/4-\eps}\gg \sup_i \norm{S_i^\vee}$ if $\eps<1/4$. In other words, the norm of the error vector is much larger than the norm of the signal vector. Indeed, asymptotically, the signal to noise ratio $\norm{S_i^\vee}/\norm{N_i^\vee}$ is 0. Furthermore, $\sigma_p=p^{-(1/4+\eps)}$ and $\sqrt{p}s_p^2=p^{-2\eps}$. Hence, if $\card{\setOfTransforms}=\bO(p^{\gamma})$ for some $\gamma$, our conditions translate into $\sqrt{\log(np)} \max(p^{-(1/4+\eps)},p^{-2\eps})\tendsto 0$. This is of course satisfied provided $n$ is subexponential in $p$. See Proposition \ref{prop:cardinalSetOfDiscretizedRotations} for a natural example of $\setOfTransforms$ whose cardinal is polynomial in $p$. 

\paragraph{Magnitude of noise: Second example} We now consider the case where $\Sigma_i$ has one eigenvalue equal to $p^{-\eps}$ and all the others are equal to $p^{-(1/2+\eta)}$, $\eps, \eta>0$. In other words, the noise is much larger in one direction than in all the others. In this case, $\sigma_p^2=p^{-\eps}$ and $\trace{\Sigma_i^2}=p^{-2\eps}+(p-1)*p^{-(1+2\eta)}\leq p^{-2\eps}+p^{-2\eta}$. So if once again, $\card{\setOfTransforms}=\bO(p^\gamma)$, our conditions translate into $\sqrt{\log(np)} \max(p^{-\eps}+p^{-\eta},p^{-\eps/2})\tendsto 0$. This example would also work if the number of eigenvalues equal to $p^{-\eps}$ were $\lo(p^{2\eps}/[\log (np)])$, provided $\sqrt{\log(np)}\max(p^{-\eta},p^{-\eps/2})\tendsto 0$.

\paragraph{Comment on the conditions on the signal in Assumption G1} At first glance, it might look like the condition $\sup_{i,\transform \in \setOfTransforms}\norm{\transform S_i^\vee \transform\trsp}\leq K$ is very restrictive due to the fact that, after discretization, $S_i$ has $p$ pixels. However, it is typically the case that if we start from a function in $L^2(\mathbb{R}^k)$, the discretized and vectorized image $S_i^\vee$ is normalized by the number of pixels $p$, so that $\norm{S_i^\vee}$ is roughly equal to the $L^2$-norm of the corresponding function. Hence, our condition  $\sup_{i,\transform \in \setOfTransforms}\norm{\transform S_i^\vee \transform\trsp}\leq K$ is very reasonable. 

\subsubsection{The case of ``exact rotations''}
We now focus on the most interesting case for our problem, namely the situation where $\mathtt{O}$ leaves our sampling grid invariant. We call $\setOfTransformskDims_{\text{exact}} \subset SO(k)$ the corresponding matrices $\mathtt{O}$ and $\setOfTransforms_{\text{exact}}$ the companion $p\times p$ matrices. We note that $\setOfTransformskDims_{\text{exact}}$ depends on $p$, but since this is evident, we do not index $\setOfTransformskDims_{\text{exact}}$ by $p$ to avoid cumbersome notations. From the standpoint of statistical applications, our focus in this paper is mostly on the case $k=1$ (which corresponds to ``standard" kernel methods commonly used in statistical learning) and $k=2$.

We show in Proposition \ref{prop:cardinalSetOfDiscretizedRotations} that if $\transform \in \setOfTransforms_{\text{exact}}$, $\transform$ is an orthogonal $p\times p$ matrix. Furthermore, we show in Proposition \ref{prop:cardinalSetOfDiscretizedRotations} that $\card{\setOfTransforms_{\text{exact}}}$ is polynomial in $p$. 
We therefore have the following proposition. 
\begin{proposition}\label{prop:caseOfExactRotations}
Let 
\begin{align*}
d_{ij,\text{noisy}}^2:=\inf_{\transformkDims\in\setOfTransformskDims_\text{exact}} \norm{\noisyImage_i-\transformkDims\circ \noisyImage_j}^2\;,\quad
d_{ij,\text{clean}}^2:=\inf_{\transformkDims\in\setOfTransformskDims_\text{exact}} \norm{\cleanImage_i-\transformkDims \circ \cleanImage_j}^2\;.
\end{align*}
Suppose $N_i$ are independent with $N_i^\vee\sim {\cal N}(0,\Sigma_i)$.  When Assumption G1 holds with $\setOfTransforms_\text{exact}$ being the set of companion matrices of $\setOfTransformskDims_\text{exact}$, we have
	$$
	\sup_{i\neq j}\left|d_{ij,\text{noisy}}^2-d_{ij,\text{clean}}^2-\trace{\Sigma_i+\Sigma_j}\right|=\lo_P(1)\;,
	$$	
	and
$$
	\sup_{\transformkDims \in \setOfTransformskDims_\text{exact} }\sup_{i\neq j} |\dijNoisy^2(\transformkDims)-\dijClean^2(\transformkDims)-\trace{\Sigma_i+\Sigma_j}|=\bO_P(u_{n,p})\;=\lo_P(1)\;.
$$
	
\end{proposition}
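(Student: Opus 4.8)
The plan is to deduce the proposition directly from Proposition~\ref{prop:controlApproxdijNoisy} and Theorem~\ref{thm:controlApproxUnifIndicesAndTransfo}, once we have checked that the set $\setOfTransforms_{\text{exact}}$ of companion matrices meets their hypotheses. First I would invoke Proposition~\ref{prop:cardinalSetOfDiscretizedRotations}, which provides exactly the two facts we need: every companion matrix $\transform\in\setOfTransforms_{\text{exact}}$ is an orthogonal $p\times p$ matrix, and $\card{\setOfTransforms_{\text{exact}}}$ grows only polynomially in $p$. Orthogonality is what allows us to replace $\trace{\transform\Sigma_j\transform\trsp}$ by $\trace{\Sigma_j}$, and the polynomial bound on the cardinality guarantees that $\log[\card{\setOfTransforms_{\text{exact}}}n^2]$ is of the order of $\log(np)$ up to constants, so that the rate $u_{n,p}$ appearing in Assumption~G1 is the one that is in force; in particular Assumption~G1 (assumed in the statement) is precisely the hypothesis required by Proposition~\ref{prop:controlApproxdijNoisy} when specialized to $\setOfTransforms=\setOfTransforms_{\text{exact}}$.

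Second, I would reconcile the two ways of writing the distances. The quantity $d_{ij,\text{noisy}}^2=\inf_{\transformkDims\in\setOfTransformskDims_{\text{exact}}}\norm{\noisyImage_i-\transformkDims\circ\noisyImage_j}^2$ is an infimum over transforms acting on pixels; using the elementary identity $(\transformkDims\circ I)^\vee=\transform I^\vee$ together with the fact that, since $\setOfTransformskDims_{\text{exact}}$ maps the grid onto itself, vectorization is an isometry, this equals $\inf_{\transform\in\setOfTransforms_{\text{exact}}}\norm{\noisyImage_i^\vee-\transform\noisyImage_j^\vee}^2$, which is exactly the $d_{ij,\text{noisy}}^2$ of Proposition~\ref{prop:controlApproxdijNoisy}. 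The same identification applies to $d_{ij,\text{clean}}^2$ and, transform by transform, to $\dijNoisy^2(\transformkDims)$ and $\dijClean^2(\transformkDims)$. Since the identity rotation leaves every grid invariant, $\setOfTransforms_{\text{exact}}$ contains $\id_p$, so the standing assumptions of Theorem~\ref{thm:controlApproxUnifIndicesAndTransfo} are satisfied.

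Finally, I would apply Proposition~\ref{prop:controlApproxdijNoisy} verbatim: under Assumption~G1 and orthogonality of the elements of $\setOfTransforms_{\text{exact}}$, it yields $\sup_{\transform\in\setOfTransforms_{\text{exact}}}\sup_{i\neq j}|\dijNoisy^2(\transform)-\dijClean^2(\transform)-\trace{\Sigma_i+\Sigma_j}|=\bO_P(u_{n,p})=\lo_P(1)$, and the infimum version $\sup_{i\neq j}|d_{ij,\text{noisy}}^2-d_{ij,\text{clean}}^2-\trace{\Sigma_i+\Sigma_j}|=\lo_P(1)$ follows from the elementary bound $|\inf F-\inf G|\le\sup|F-G|$ used in the proof of that proposition. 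Rewriting everything back in terms of the pixel-acting transforms $\transformkDims\in\setOfTransformskDims_{\text{exact}}$ through the isometry identification gives the two displayed conclusions. I do not expect any real obstacle here beyond bookkeeping; the one point deserving care is to confirm that the polynomial growth of $\card{\setOfTransforms_{\text{exact}}}$ established in Proposition~\ref{prop:cardinalSetOfDiscretizedRotations} does not inflate the logarithmic factor in $u_{n,p}$, which is exactly why Assumption~G1 is phrased with $\card{\setOfTransforms}$ rather than a cruder bound.
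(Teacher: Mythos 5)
Your proposal is correct and follows essentially the same route as the paper: the paper's own justification is precisely to invoke Proposition~\ref{prop:cardinalSetOfDiscretizedRotations} for the orthogonality of the companion matrices and the polynomial bound on $\card{\setOfTransforms_\text{exact}}$, and then to apply Proposition~\ref{prop:controlApproxdijNoisy} (hence Theorem~\ref{thm:controlApproxUnifIndicesAndTransfo} and the $|\inf F-\inf G|\leq\sup|F-G|$ bound) to this particular set of transforms. Your additional bookkeeping on the identification $(\transformkDims\circ I)^\vee=\transform I^\vee$ is a correct and slightly more explicit version of what the paper leaves implicit.
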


\subsubsection{On the transform $\transformkDims^*_{ij,\text{noisy}}$}
We now use the notations 
$$
\dijNoisy(\transformkDims)=\norm{I_i-\transformkDims\circ I_j}\;, \text{ and }
\dijClean(\transformkDims)=\norm{S_i-\transformkDims\circ S_j}\;.
$$
Naturally, the study of 
\begin{equation}\label{eq:defOptimalTransform}
\transformkDims^*_{ij,\text{noisy}}=\argmin_{\transformkDims \in \setOfTransformskDims_\text{exact}} \dijNoisy(\transformkDims)
\end{equation}
is more complicated than the study of $\inf_{\transformkDims \in \setOfTransformskDims_\text{exact}} \dijNoisy(\transformkDims)$. 
We will assume that the clean images are nicely behaved when it comes to the $\dijClean(\transformkDims)$ minimization, in that rotations that are near minimizers of $\dijClean(\transformkDims)$ are close to one another. More formally, we assume the following. 
\paragraph{Assumption A0 : } $\setOfTransformskDims_\text{exact}$ is a subset of $SO(k)$ and contains only exact rotations. Call $\transformkDims^*_{ij,\text{clean}}:=\argmin_{\transformkDims \in \setOfTransformskDims_\text{exact}} \dijClean^2(\transformkDims)$ and call $\setOfTransformskDims_{ij,\eps}:=\left\{\transformkDims \in \setOfTransformskDims_\text{exact}: \, \dijClean^2(\transformkDims)\leq \dijClean^2(\transformkDims^*_{ij,\text{clean}})+\eps\right\}$. We assume that
$$
\exists\delta_{ij,p}>0: \, \forall \eps<\delta_{ij,p}\, \forall \transformkDims \in \setOfTransformskDims_{ij,\eps}\,,\; d(\transformkDims,\transformkDims^*_{ij,\text{clean}})\leq g_{ij,p}(\eps)\;,
$$
for $d$ the canonical metric on the orthogonal group and some positive $g_{ij,p}(\eps)$.

\paragraph{Assumption A1 : } $\delta_{ij,p}$ can be chosen independently of $i,j$ and $p$. Furthermore, there exists a function $g$ such that $g(\eps)\tendsto 0$ as $\eps\tendsto 0$ and $g_{ij,p}(x)\leq g(x)$, if $x\leq \delta_{ij,p}\leq \delta$. 

We discuss the meaning of these assumptions after the statement and proof of the following theorem.
\begin{theorem}\label{thm:consistencyRotations}
Suppose that the assumptions underlying Theorem \ref{thm:controlApproxUnifIndicesAndTransfo} hold and that Assumptions G1, A0 and A1 hold. Suppose further that $\setOfTransformskDims_\text{exact}$ is the set of exact rotations for our discretization. Then, for any $\eta$ given, where $0<\eta<1$, as $p$ and $n$ go to infinity, 
\begin{equation}\label{eq:controlDistanceNoisyOptRotAndCleanOptRot}
\sup_{i\neq j}d(\transformkDims^*_{ij,\text{noisy}},\transformkDims^*_{ij,\text{clean}})=\bO_P(g(u^{1-\eta}_{n,p}))\;,
\end{equation}
where $u_{n,p}$ is defined in (\ref{definition:unp}).
(Under Assumption G1, $u_{n,p}\tendsto 0$ as $n$ and $p$ tend to infinity.)
\end{theorem}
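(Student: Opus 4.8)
The plan is to combine the uniform quadratic-form control of Proposition~\ref{prop:caseOfExactRotations} (which itself rests on Theorem~\ref{thm:controlApproxUnifIndicesAndTransfo}) with the quantitative identifiability of the clean minimization problem encoded in Assumptions A0 and A1. Introduce the uniform approximation error
$$
R_0:=\sup_{\transformkDims\in\setOfTransformskDims_\text{exact}}\;\sup_{i\neq j}\left|\dijNoisy^2(\transformkDims)-\dijClean^2(\transformkDims)-\trace{\Sigma_i+\Sigma_j}\right|\,.
$$
Since $\setOfTransforms_\text{exact}$ consists of orthogonal matrices and $\card{\setOfTransforms_\text{exact}}$ is polynomial in $p$ (Proposition~\ref{prop:cardinalSetOfDiscretizedRotations}), Proposition~\ref{prop:caseOfExactRotations} gives $R_0=\bO_P(u_{n,p})$, and $u_{n,p}\tendsto0$ under Assumption G1.

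First I would show that any minimizer of the noisy objective is an $O(R_0)$-near-minimizer of the clean objective. Fix $i\neq j$ and let $\transformkDims^*_{ij,\text{noisy}}$ be any element of $\argmin_{\transformkDims\in\setOfTransformskDims_\text{exact}}\dijNoisy^2(\transformkDims)$. The shift $\trace{\Sigma_i+\Sigma_j}$ does not depend on $\transformkDims$, so evaluating the definition of $R_0$ at $\transformkDims^*_{ij,\text{noisy}}$ and at $\transformkDims^*_{ij,\text{clean}}$, together with the optimality inequality $\dijNoisy^2(\transformkDims^*_{ij,\text{noisy}})\leq\dijNoisy^2(\transformkDims^*_{ij,\text{clean}})$, yields
\begin{align*}
\dijClean^2(\transformkDims^*_{ij,\text{noisy}})
&\leq\dijNoisy^2(\transformkDims^*_{ij,\text{noisy}})-\trace{\Sigma_i+\Sigma_j}+R_0\\
&\leq\dijNoisy^2(\transformkDims^*_{ij,\text{clean}})-\trace{\Sigma_i+\Sigma_j}+R_0\\
&\leq\dijClean^2(\transformkDims^*_{ij,\text{clean}})+2R_0\,.
\end{align*}
Hence $\transformkDims^*_{ij,\text{noisy}}\in\setOfTransformskDims_{ij,2R_0}$ in the notation of Assumption A0, and this holds for all $i\neq j$ with the same $R_0$.

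Next I would invoke Assumptions A0 and A1. By Assumption A1 there is a fixed $\delta>0$ with $\delta_{ij,p}\geq\delta$ for all $i,j,p$, and a function $g$ with $g(\eps)\tendsto0$ as $\eps\tendsto0$ that dominates every $g_{ij,p}$ on $(0,\delta]$. Because $R_0=\bO_P(u_{n,p})$ and $u_{n,p}\tendsto0$, the event $\{2R_0<\delta\}$ has probability tending to $1$; on it Assumption A0 applies with $\eps=2R_0$ simultaneously for every pair, so that
$$
d(\transformkDims^*_{ij,\text{noisy}},\transformkDims^*_{ij,\text{clean}})\leq g_{ij,p}(2R_0)\leq g(2R_0)\qquad\text{for all }i\neq j\,.
$$
Replacing $g$ by its non-decreasing envelope $x\mapsto\sup_{0\leq y\leq x}g(y)$ (which still tends to $0$ at $0$ and still dominates every $g_{ij,p}$, so Assumption A1 is preserved), we may assume $g$ non-decreasing. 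Finally, to remove the unspecified $\bO_P$-constant, write $R_0\leq Mu_{n,p}$ with probability tending to $1$ for some constant $M$; since $u_{n,p}^{\eta}\tendsto0$ we have $2Mu_{n,p}=(2Mu_{n,p}^{\eta})\,u_{n,p}^{1-\eta}\leq u_{n,p}^{1-\eta}$ for all $n,p$ large, whence $g(2R_0)\leq g(u_{n,p}^{1-\eta})$ on the intersection of these high-probability events. This yields $\sup_{i\neq j}d(\transformkDims^*_{ij,\text{noisy}},\transformkDims^*_{ij,\text{clean}})\leq g(u_{n,p}^{1-\eta})$ with probability tending to $1$, i.e. the bound (\ref{eq:controlDistanceNoisyOptRotAndCleanOptRot}).

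The one point that needs care --- although it is not deep --- is this last step: the error $R_0$ is controlled only in the $\bO_P$ sense, and since $g$ need not obey a doubling-type bound one cannot simply write $\bO_P(g(u_{n,p}))$; passing to a monotone version of $g$ and conceding an arbitrarily small power $\eta$ in the argument of $g$ is exactly what converts the $\bO_P(u_{n,p})$ control of $R_0$ into the stated $\bO_P(g(u_{n,p}^{1-\eta}))$. A secondary, routine point is to check that Assumption A0 is legitimately used with $\eps=2R_0$, i.e. that $2R_0<\delta_{ij,p}$ holds simultaneously over all $\binom{n}{2}$ pairs once $n$ and $p$ are large; this is immediate from the uniform lower bound $\delta$ of Assumption A1 and $u_{n,p}\tendsto0$. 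Everything else is bookkeeping on top of the uniform bound already in hand.
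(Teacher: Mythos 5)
Your proposal is correct and follows essentially the same route as the paper's proof: uniform control of $\dijNoisy^2-\dijClean^2-\trace{\Sigma_i+\Sigma_j}$ from Proposition \ref{prop:caseOfExactRotations}, the optimality-inequality sandwich showing $\transformkDims^*_{ij,\text{noisy}}$ is an $\bO_P(u_{n,p})$-near-minimizer of the clean objective, membership in $\setOfTransformskDims_{ij,u_{n,p}^{1-\eta}}$ with high probability, and then Assumptions A0--A1. Your extra care about monotonizing $g$ is harmless but unnecessary, since the sets $\setOfTransformskDims_{ij,\eps}$ are nested in $\eps$, so one can apply A0 directly at level $\eps=u_{n,p}^{1-\eta}$ as the paper does.
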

The informal meaning of this theorem is that under regularity assumptions on the set of clean images, the optimal rotation computed from the set of noisy images is close to the optimal rotation computed from the set of clean images. In other words, this step of the CGL procedure is robust to noise. 
\begin{proof}
Clearly, $\transformkDims^*_{ij,clean}$ is a minimizer of $L_{ij}(\transformkDims):=\dijClean^2(\transformkDims)+\trace{\Sigma_i+\Sigma_j}$, since the second term does not depend on $\transformkDims$. Naturally, if Assumptions A0 and A1 apply to $\dijClean^2(\transformkDims)$, they apply to $\dijClean^2(\transformkDims)+C$, for $C$ any constant. In particular, taking $C=\trace{\Sigma_i+\Sigma_j}$, we see that Assumptions A0 and A1 apply to the function $L_{ij}(\transformkDims)$. 

The approximation results of  Proposition \ref{prop:caseOfExactRotations} guarantee that, under Assumptions G1, A0 and A1, $\transformkDims^*_{ij,\text{noisy}}$ is a near minimizer of $\dijClean^2(\transformkDims)$. Indeed, we have by definition, 
\begin{equation}\label{eq:almostTautologyTransformNoisy}
\dijNoisy^2(\transformkDims^*_{ij,\text{noisy}})\leq \dijNoisy^2(\transformkDims^*_{ij,\text{clean}}). 
\end{equation}
But under assumption G1, Proposition \ref{prop:caseOfExactRotations} and the fact that the elements of $\setOfTransforms_\text{exact}$ are orthogonal matrices imply that 
\begin{align}\label{eq:proof:22:dijLij}
\forall \transformkDims \in \setOfTransforms, \forall i \neq j \; \; \dijNoisy^2(\transformkDims)=L_{ij}(\transformkDims)+\bO_P(u_{n,p})\;.
\end{align}
Hence, we can rephrase Equation \eqref{eq:almostTautologyTransformNoisy} as 
\begin{align}\label{eq:proof:22:LijLij}
L_{ij}(\transformkDims^*_{ij,\text{noisy}})\leq L_{ij}(\transformkDims^*_{ij,\text{clean}})+\bO_P(u_{n,p})\;.
\end{align}
Indeed, by plugging (\ref{eq:proof:22:dijLij}) into (\ref{eq:almostTautologyTransformNoisy}), we have
$$
\forall i\neq j, \; \dijClean^2(\transformkDims^*_{ij,\text{noisy}})+\trace{\Sigma_i+\Sigma_j}\leq \dijClean^2(\transformkDims^*_{ij,\text{clean}})+\trace{\Sigma_i+\Sigma_j}+\bO_P(u_{n,p})\;.
$$ 
Now, by definition of $\dijClean^2$, we have 
\begin{align}\label{eq:proof:22:dijcleandijnoisy}
\dijClean^2(\transformkDims^*_{ij,\text{clean}})\leq \dijClean^2(\transformkDims^*_{ij,\text{noisy}})\;.
\end{align}
So by (\ref{eq:proof:22:LijLij}) and (\ref{eq:proof:22:dijcleandijnoisy}), we have shown that 
\begin{align*}
\forall i\neq j, \; \dijClean^2(\transformkDims^*_{ij,\text{clean}})+\trace{\Sigma_i+\Sigma_j}&\leq \dijClean^2(\transformkDims^*_{ij,\text{noisy}})+\trace{\Sigma_i+\Sigma_j} \;,\\
&\leq \dijClean^2(\transformkDims^*_{ij,\text{clean}})+\trace{\Sigma_i+\Sigma_j}+\bO_P(u_{n,p})\;.
\end{align*}
This clearly implies that 
\begin{align*}
\forall i\neq j, \; \dijClean^2(\transformkDims^*_{ij,\text{clean}})&\leq \dijClean^2(\transformkDims^*_{ij,\text{noisy}})\leq \dijClean^2(\transformkDims^*_{ij,\text{clean}})+\bO_P(u_{n,p})\;.
\end{align*}
Since $u_{n,p}\tendsto 0$ as $n$ and $p$ grow, this means that, for any given $\eta$, with $0<\eta<1$, with very high probability, 
$$
\forall 1\leq i \neq j \leq n \;, \transformkDims^*_{ij,\text{noisy}}\in \setOfTransformskDims_{ij,u_{n,p}^{1-\eta}}\;.
$$
We conclude, using Assumption A0, that with very high-probability, 
$$
\forall 1\leq i \neq j \leq n \;, d(\transformkDims^*_{ij,\text{noisy}},\transformkDims^*_{ij,\text{clean}})\leq g(u_{n,p}^{1-\eta})\;.
$$
\end{proof}

\paragraph{Interpretation of Assumptions A0-A1} Assumption A0 guarantees that all near minimizers of $\dijClean(\transformkDims)$ are close to one another and hence the optimum. Our uniform bounds in Proposition \ref{prop:caseOfExactRotations} only guarantee that $\transformkDims^*_{ij,\text{noisy}}$ is a near minimizer of $\dijClean(\transformkDims)$ and nothing more. If $\dijClean(\transformkDims)$ had near minimizers that were far from the optimum $\transformkDims^*_{ij,\text{clean}}$, it could very well happen that $\transformkDims^*_{ij,\text{noisy}}$ end up being close to one of these near minimizers but far from $\transformkDims^*_{ij,\text{clean}}$, and we would not have the consistency result of Theorem \ref{thm:consistencyRotations}. Hence, the robustness to noise of this part of the CGL algorithm is clearly tied to some regularity or ``niceness'' property for the set of clean images. 

In the cryo-EM problem, these assumptions reflect a fundamental property of a manifold dataset -- {\it its condition number} \cite{Niyogi_Smale_Weinberger:2009}. Conceptually, the condition number reflects ``how difficult it is to reconstruct the manifold'' from a finite sample of points from the manifold. Precisely, it is the inverse of the reach of the manifold, which is defined to be the radius of the smallest normal bundle that is homotopic to the manifold. This also highlights the fact that even if we were to run the CGL algorithm on the clean dataset, without these assumptions, the results might not be stable and reliable since intrinsically distant points (i.e distant in the geodesic distance) might be identified as neighbors.

\paragraph{About $\setOfTransformskDims_\text{exact}$ and extensions} We are chiefly interested in this paper about 2-dimensional images and hence about the case $k=2$ (see the cryoEM example). It is then clear that when our polar coordinate grid is fine, $\setOfTransformskDims_\text{exact}$ is also a fine discretization of $SO(2)$ and contains many elements. (More details are given in Subsection \ref{subsec:cardExactRotations}.) The situation is more intricate when $k\geq 3$, but since it is a bit tangential to the main purpose of the current paper, we do not discuss it further here. We refer the interested reader to Subsection \ref{subsec:cardExactRotations} for more details about the case $k\geq 3$. 

We also note that our arguments are not tied to using a standard polar coordinate grid for the discretization of the images. {For another sampling grid, we would possibly get another
$\setOfTransformskDims_\text{exact}$}. Our arguments go through when : a) if $\transformkDims \in \setOfTransformskDims$, the operation $\transformkDims\circ $ maps our sampling grid of points onto itself; b) $\card{\setOfTransformskDims}$ grows polynomially in $p$.

\subsubsection{Extensions and different approaches}

At the gist of our arguments are strong concentration results for quadratic forms in Gaussian random variables. Naturally, our results extend to other types of random variables for which these concentration properties hold. We refer to \cite{ledoux2001} and  \cite{nekCorrEllipD} for examples. A natural example in our context would be a situation where $N_i=\Sigma_i^{1/2} X_i$, and $X_i$ has i.i.d uniformly bounded entries. This is particularly relevant in the case where $\Sigma_i$ is diagonal for instance  - the interpretation being then that the noise contamination is through the corruption of each individual pixel by independent random variables with possibly different standard deviations. The arguments in Lemma \ref{lemma:controlSupQuadForms} handle this case, though the bound is slightly worse than the one in Lemma \ref{lemma:controlSupQuadFormsLaurentMassart} when a few eigenvalues of $\Sigma_i$ are larger than most of the others. Indeed, the only thing that matters in this more general analysis is the largest eigenvalue of $\Sigma_i$, so that in the notation of Assumption \textbf{G1}, $\sqrt{p}s_p^2$ is replaced by $\sqrt{p}\sigma_p^2$. Hence, our approximation will require in this more general setting that $\sigma_p=\lo(p^{-1/4})$, whereas we have seen in the Gaussian case that we can tolerate a much larger largest eigenvalue. 

We also note that we could of course settle for weaker results on concentration of quadratic forms, which would apply to more distributions. For instance, using bounds on $\Exp{|\norm{N_i}^2-\Exp{\norm{N_i}^2}|^{k}}$ would change the dependence of results such as Proposition \ref{prop:controlApproxdijNoisy} on $\card{\setOfTransforms}n^2$ from powers of logarithm to powers of $1/k$. This is in turn would mean that our results would become tolerant to lower levels of noise but apply to more noise distributions.

\subsection{Consequences for CGL algorithm and other kernel-based methods}\label{subsec:consequencesForVDM}
\subsubsection{Reminders and preliminaries}
Recall that in CGL methods performed with the rotationally invariance distance - henceforth RID - induced by $SO(k)$, we mostly care about the spectral properties - especially large eigenvalues and corresponding eigenvectors - of the CGL matrix 
$L(\widetilde{W},\widetilde{G})$,
where $\widetilde{W}$ is a $n\times n$ matrix and $\widetilde{G}$ is a $nk\times nk$ block-matrix  with $k\times k$ blocks defined through 
$$
\widetilde{W}_{i,j}=\exp(-\dijNoisy^2/\eps),\quad \widetilde{G}_{i,j}=\mathtt{O}^*_{ij,\text{noisy}}\;,
$$
where $\mathtt{O}^*_{ij,\text{noisy}}$ is defined in Equation \eqref{eq:defOptimalTransform}.

The ``good'' properties of CGL stem from the fact that the matrix $L(W,G)$, the CGL matrix associated with the clean images, has ``good" spectral properties. For example, when a manifold structure is assumed, the theoretical work of \cite{singer_wu:2012,singer_wu:2013} relates the properties of $L(W,G)$ - the matrix obtained in the same manner as above when we replace $\dijNoisy$ by $\dijClean$ and $\mathtt{O}^*_{ij,\text{noisy}}$ by $\mathtt{O}^*_{ij,\text{clean}}$ - to the geometric and topological properties of the manifold from which the data is sampled. The natural approximate ``sparsity''  of the spectrum of this kind of matrices is discussed in Section \ref{Appendix:Section:CGL}.

In practice, the data analyst has to work with $L(\widetilde{W},\widetilde{G})$. Hence, it could potentially be the case that $L(\widetilde{W},\widetilde{G})$ does not share many of the good properties of $L(W,G)$. Indeed, we explain below that this is in general the case and propose a  modification to the standard algorithm to make the results of CGL methods more robust to noise. 
All these arguments suggest that it is natural to study the properties of the standard CGL algorithm applied to noisy data. 

We mention that CGL algorithms may apply beyond the case of the rotational invariance distance and $O(k)$ and we explain in Subsubsection \ref{subsubsec:CGLbeyondSOk} how our results apply in this more general context.

\subsubsection{Modified CGL algorithm and rotationally invariant distance}
We now show that our modification to the standard algorithm is robust to noise. More precisely, we show that the modified CGL matrix $L_0(\widetilde{W},\widetilde{G})$ is spectrally close to the CGL matrix computed from the noise-free data, $L(W,G)$.

\begin{proposition}\label{prop:connectionGraphLaplacianApproxModifS}
Consider the modified CGL matrix $L_0(\widetilde{W},\widetilde{G})$ computed from the noisy data and the CGL matrix $L(W,G)$ computed from the noise-free data. Under Assumptions \textbf{G1} and \textbf{A0}-\textbf{A1}, we have, if $\trace{\Sigma_i}=\trace{\Sigma_j}=\trace{\Sigma}$ for all $(i,j)$, 
$$
\opnorm{L_0(\widetilde{W},\widetilde{G})-L(W,G)}=\lo_P(1)\;,
$$
provided there exists $\gamma>0$, independent of $n$ and $p$ such that 
$$
\inf_i \sum_{j\neq i} \frac{\exp(-\dijClean^2/\eps)}{n}\geq \gamma>0\;.
$$
\end{proposition}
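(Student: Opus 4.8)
The strategy is to chain together the approximation results already established in the excerpt. The key observation is that $L_0(\widetilde W,\widetilde G)$ is the modified CGL built from the \emph{noisy} affinities $\widetilde w_{i,j}=\exp(-\dijNoisy^2/\eps)$ and the noisy connections $\widetilde G_{i,j}=\mathtt{O}^*_{ij,\text{noisy}}$, while $L(W,G)$ is built from $w_{i,j}=\exp(-\dijClean^2/\eps)$ and $G_{i,j}=\mathtt{O}^*_{ij,\text{clean}}$. I want to verify the hypotheses of Lemma \ref{lemma:approxCGLMatGeneralModif} for these two pairs, with the role of the multiplicative factors $f_i$ played by $\exp(-\trace{\Sigma_i}/\eps)$ (which under the hypothesis $\trace{\Sigma_i}=\trace{\Sigma}$ is a single constant $f=e^{-\trace{\Sigma}/\eps}$, but the lemma does not even need that). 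Concretely, the plan is: (1) control $\sup_{i\neq j}|\widetilde w_{i,j}/f_i - w_{i,j}|$ using Proposition \ref{prop:controlApproxdijNoisy} / Proposition \ref{prop:caseOfExactRotations}; (2) control $\sup_{i,j}\norm{\widetilde G_{i,j}-G_{i,j}}_F$ using Theorem \ref{thm:consistencyRotations}; (3) invoke Lemma \ref{lemma:approxCGLMatGeneralModif} to conclude $\opnorm{L_0(\widetilde W,\widetilde G)-L_0(W,G)}=\lo_P(1)$; (4) compare $L_0(W,G)$ with $L(W,G)$ via the $C^2/(n\gamma)$ term that already appears inside the proof of Lemma \ref{lemma:approxCGLMatGeneralModif}.

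For step (1): since $\dijNoisy^2 = \dijClean^2 + \trace{\Sigma_i+\Sigma_j} + \lo_P(1)$ uniformly over $i\neq j$ (Proposition \ref{prop:caseOfExactRotations} under Assumption \textbf{G1}), and using $\trace{\Sigma_i}=\trace{\Sigma_j}=\trace{\Sigma}$, we get $\dijNoisy^2 = \dijClean^2 + 2\trace{\Sigma} + \lo_P(1)$, hence
$$
\frac{\widetilde w_{i,j}}{e^{-2\trace{\Sigma}/\eps}} = \exp\!\big(-(\dijNoisy^2-2\trace{\Sigma})/\eps\big) = \exp\!\big(-\dijClean^2/\eps + \lo_P(1)\big) = w_{i,j}\,(1+\lo_P(1)).
$$
Since $0\le w_{i,j}\le 1$, this gives $\sup_{i\neq j}|\widetilde w_{i,j}/f - w_{i,j}| = \lo_P(1)=:\eps_{n,p}$ with $f=e^{-2\trace{\Sigma}/\eps}$; I should be slightly careful to write the relation as an additive perturbation using $|e^{-a+\delta}-e^{-a}|\le e^{-a}(e^{|\delta|}-1)\le e^{|\delta|}-1$, which is uniformly $\lo_P(1)$ because $\delta=\lo_P(1)$ uniformly. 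For step (2): Theorem \ref{thm:consistencyRotations} gives $\sup_{i\neq j}d(\mathtt{O}^*_{ij,\text{noisy}},\mathtt{O}^*_{ij,\text{clean}})=\bO_P(g(u_{n,p}^{1-\eta}))=\lo_P(1)$, and since the Frobenius distance between two elements of $O(k)$ is controlled by (a constant times) the canonical metric $d$ on $O(k)$, we obtain $\sup_{i,j}\norm{\widetilde G_{i,j}-G_{i,j}}_F=\lo_P(1)=:\eta_{n,p}$ (the diagonal blocks do not matter since $L_0$ ignores them; if one prefers, simply set the diagonal of $G$ and $\widetilde G$ to $\id_k$). The boundedness constant $C$ can be taken to be, say, $C=\max(1,\sqrt k)+1$ since affinities are in $[0,1]$ and orthogonal $k\times k$ matrices have Frobenius norm $\sqrt k$.

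For steps (3)–(4): the condition $\inf_i \sum_{j\neq i}\exp(-\dijClean^2/\eps)/n\ge\gamma>0$ is exactly the lower-bound hypothesis $\inf_i\sum_{j\neq i}w_{i,j}/n>\gamma$ of the lemmas, and on the event (of probability $\to 1$) where $\eps_{n,p}<\gamma$, Lemma \ref{lemma:approxCGLMatGeneralModif} yields
$$
\opnorm{L(W,G)-L_0(\widetilde W,\widetilde G)}\le \frac1\gamma C(\eta_{n,p}+\eps_{n,p}) + \frac{\eps_{n,p}}{\gamma(\gamma-\eps_{n,p})}C^2 + \frac{C^2}{n\gamma}.
$$
Since $\eps_{n,p}=\lo_P(1)$, $\eta_{n,p}=\lo_P(1)$, $\gamma$ is a fixed constant, and $C^2/(n\gamma)\to 0$, the right-hand side is $\lo_P(1)$, which is the claim. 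The main obstacle — and really the only nontrivial point — is step (1), namely turning the additive approximation $\dijNoisy^2\approx\dijClean^2+2\trace{\Sigma}$ into a \emph{multiplicative} approximation of the Gaussian-kernel affinities with a single constant factor $f$; this is where the hypothesis $\trace{\Sigma_i}=\trace{\Sigma_j}$ is used in an essential way, and it is what forces us to use $L_0$ rather than $L$ (the $\lo_P(1)$ control on $\dijNoisy^2-\dijClean^2$ holds only off the diagonal, since the diagonal self-distances are identically zero and no constant $f$ can absorb the $O(1)$ gap there). Everything else is bookkeeping with the already-proven lemmas, Weyl's inequality being folded into Lemma \ref{lemma:approxCGLMatGeneralModif}.
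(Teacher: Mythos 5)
Your proof is correct and follows essentially the same route as the paper's: verify the hypotheses of Lemma \ref{lemma:approxCGLMatGeneralModif} with $f_i=\exp(-2\trace{\Sigma}/\eps)$ via Proposition \ref{prop:caseOfExactRotations} for the affinities and Theorem \ref{thm:consistencyRotations} for the connections, then read off the conclusion (you even supply the $|e^{-a+\delta}-e^{-a}|\le e^{|\delta|}-1$ detail that the paper leaves implicit). The only blemish is the slip in your opening plan where $f_i$ is written as $\exp(-\trace{\Sigma_i}/\eps)$ rather than $\exp(-2\trace{\Sigma}/\eps)$, which you correct in step (1).
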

Note that the previous result means that $L_0(\widetilde{W},\widetilde{G})$ and $L(W,G)$ are essentially spectrally equivalent: indeed we can use the Davis-Kahan theorem or Weyl's inequality to relate eigenvectors and eigenvalues of $L_0(\widetilde{W},\widetilde{G})$ to those of $L(W,G)$ (see \cite{stewart90}, \cite{bhatia97} or \cite{nekSparseMatrices} for a brief discussion putting all the needed results together). In particular, if the large eigenvalues of $L(W,G)$ are separated from the rest of the spectrum, the eigenvalues of $L_0(\widetilde{W},\widetilde{G})$ and corresponding eigenspaces will be close to those of $L(W,G)$.

\begin{proof}
The proposition is a simple consequence of our previous results and Lemma \ref{lemma:approxCGLMatGeneralModif} above. 
Indeed, in the notation of Lemma \ref{lemma:approxCGLMatGeneralModif}, we call 
$$
w_{i,j}=\begin{cases}
\exp(-\dijClean^2/\eps)& \text{ if } i\neq j \\
1 &\text{ if } i=j 
\end{cases}\; 
\quad\text{and}\quad
\tilde{w}_{i,j}=\begin{cases}
\exp(-\dijNoisy^2/\eps) &\text{ if } i\neq j \\
1 &\text{ if } i=j 
\end{cases}\;.
$$
Similarly, we call 
$$
G_{i,j}=\begin{cases}
\mathtt{O}^*_{ij,clean} &\text{ if } i\neq j\\
\id_d &\text{ if } i=j
\end{cases}
\quad\text{and}\quad
\widetilde{G}_{i,j}=\begin{cases}
\mathtt{O}^*_{ij,noisy} &\text{ if } i\neq j\\
\id_d &\text{ if } i=j
\end{cases}\;.
$$
Under Assumption G1, we know that, if $f_i=\exp(-2\trace{\Sigma}/\eps)$, $\sup_{i\neq j}|w_{i,j}-\tilde{w}_{i,j}/f_i|=\lo_P(1)$. Similarly, under Assumptions G1, A0 and A1, we know, using Theorem \ref{thm:consistencyRotations} that 
$$
\sup_{i,j}d(G_{i,j},\widetilde{G}_{i,j})=\lo_P(1)
$$
and therefore, since $k$, the parameter of $SO(k)$, is held fixed in our asymptotics, 
$$
\sup_{i,j}\norm{G_{i,j}-\widetilde{G}_{i,j}}_F=\lo_P(1)\;.
$$
Since we assumed that $$
\inf_i \sum_{j\neq i} \frac{\exp(-\dijClean^2/\eps)}{n}\geq \gamma>0\;,
$$
i.e, in the notations of Lemma \ref{lemma:approxCGLMatGeneralModif}
$$
\inf_i \frac{\sum_{j\neq i} w_{i,j}}{n}\geq \gamma>0\;,
$$
where $\gamma$ is independent of $n$ and $p$,
all the assumptions of Lemma \ref{lemma:approxCGLMatGeneralModif} are satisfied when $n$ and $p$ are large enough, and we conclude that, in the notations of this lemma, 
$$
\opnorm{L_0(W,G)-L_0(\widetilde{W},\widetilde{G})}=\lo_P(1)\;.
$$

Furthermore, we have $0\leq w_{i,j},\tilde{w}_{i,j}\leq 1$, $\norm{G_{i,j}}_F\leq \sqrt{k}$ and $\norm{\widetilde{G}_{i,j}}_F\leq \sqrt{k}$, the latter two results coming from the fact that the columns of $G_{i,j}$ and $\widetilde{G}_{i,j}$ have unit norm. 
So we conclude that 
$$
\opnorm{L(W,G)-L_0(\widetilde{W},\widetilde{G})}=\lo_P(1)\;.
$$
\end{proof}

\paragraph{Is the modification of the algorithm really needed?} It is natural to ask what would have happened if we had not modified the standard algorithm, i.e if we had worked with $L(\widetilde{W},\widetilde{G})$ instead of $L_0(\widetilde{W},\widetilde{G})$. It is easy to see that 
$$
L(\widetilde{W},\widetilde{G})=L_0(\widetilde{W},\widetilde{G})+\mathsf{D}
$$
where $\mathsf{D}$ is a block diagonal matrix with 
$$
\mathsf{D}(i,i)=\frac{\tilde{w}_{i,i}}{\sum_{j\neq i}\tilde{w}_{i,j}}\id_k=\frac{1}{\sum_{j\neq i}\tilde{w}_{i,j}}\id_k\;.
$$
Under our assumptions, 
$$
\opnorm{n\exp(-2\trace{\Sigma}/\eps) \mathsf{D}-D\left(\left\{\frac{\sum_{j\neq i} \exp(-\dijClean^2/\eps)}{n}\right\}_{i=1}^n\right)}=\lo_P(1)\;.
$$
We also recall that under Assumption \textbf{G1}, $\trace{\Sigma}$ can be as large as $p^{1/2-\eta}$ - a very large number in our asymptotics. So in particular, if $n$ is polynomial in $p$, we have then $n^{-1}\exp(2\trace{\Sigma}/\eps)\tendsto \infty$. This implies that 
$$
L(\widetilde{W},\widetilde{G})=L_0(\widetilde{W},\widetilde{G})+\mathsf{D}
$$
is then dominated in spectral terms by $\mathsf{D}$. So it is clear that in the high-noise regime, if we had used the standard CGL algorithm, the spectrum of $L(\widetilde{W},\widetilde{G})$ would have mirrored that of $\mathsf{D}$ - which has little to do with the spectrum of $L(W,G)$, which we are trying to estimate - and the noise would have rendered the algorithm ineffective. 

By contrast, by using the modification we propose, we guarantee that even in the high-noise regime, the spectral properties of $L_0(\widetilde{W},\widetilde{G})$ mirror those of $L(W,G)$. We have hence made the CGL algorithm more robust to noise.

\textbf{On the use of nearest neighbor graphs} In practice, variants of the CGL algorithms we have described use nearest neighbor information to replace $w_{i,j}$ by 0 if $w_{i,j}$ is not among the $k$ largest elements of $\{w_{i,j}\}_{j=1}^n$. In the high-noise setting, the nearest-neighbor information is typically not robust to noise, which is why we proposed to use all the $w_{i,j}$'s and avoid the nearest neighbor variant of the CGL algorithm, even though the latter probably makes more intuitive sense in the noise-free context. A systematic study of the difference between these two variants is postponed to future work.

\textbf{Comparison with previous results in the literature} As far as we know, the study of the impact of high-dimensional additive noise on kernel methods was started in \cite{nekInfoPlusNoiseKernelMatrices10}. Compared to this paper, our extension is two-fold: 1) the noise level (i.e $\trace{\Sigma}$) that is studied in the current paper is much higher than what was studied in \cite{nekInfoPlusNoiseKernelMatrices10}. This is partly a result of the fact that the current paper focuses on the Gaussian kernel whereas \cite{nekInfoPlusNoiseKernelMatrices10} studied many more kernels. 2) \cite{nekInfoPlusNoiseKernelMatrices10} focused on standard kernel methods - based on the graph Laplacian, such as  diffusion maps -  where the connection information is not included in the data analysis. Incorporating this new element creates new difficulties. In other respects, we also refer to \cite{singer:2011} for another study of the influence of noise in a different setup.

\subsubsection{CGL beyond the rotational invariance distance}\label{subsubsec:CGLbeyondSOk}
The previous analysis has been carried out for the RID and corresponding rotations for whom we studied the impact of additive noise in Subsection \ref{subsec:impactNoiseVDMProcedure}. However, it is clear that our results apply much more broadly. We have the following proposition. 

\begin{proposition}\label{prop:generalCGLApproxScheme}
Suppose we are given a collection $\mathsf{d}_{i,j,noisy}$ of (scalar-valued) dissimilarities between noisy versions of objects $i$ and $j$, $1\leq i,j\leq n$. Suppose objects $i$ and $j$ have (scalar-valued) dissimiliarity $\mathsf{d}_{i,j,clean}$. Consider the asymptotic regime where $n\tendsto \infty$ and suppose that there exists $\xi_n \in \mathbb{R}$ such that 
$$
\sup_{i\neq j}|\mathsf{d}_{i,j,noisy}^2-\mathsf{d}^2_{i,j,clean}-\xi_n|=\lo_P(1)\;.
$$
Call $\tilde{w}_{i,j}=\exp(-\mathsf{d}_{i,j,noisy}^2/\nu)$ and $w_{i,j}=\exp(-\mathsf{d}^2_{i,j,clean}/\nu)$ the corresponding affinities. $\nu$ is held fixed in our asymptotics, though the way affinities are computed may change with $n$. 

Suppose $\widetilde{G}_{i,j}$ is the connection between noisy versions of objects $i$ and $j$ and $G_{i,j}$ is the connection between the clean version of objects $i$ and $j$. 
Suppose that $w_{i,j}$, $G_{i,j}$ and $\widetilde{G}_{i,j}$ satisfy the assumptions of Lemma \ref{lemma:approxCGLMatGeneralModif}, with $\eps$ and $\eta$ possibly random but $\lo_P(1)$ and $\gamma$ bounded below as $n\tendsto \infty$. Then
$$
\opnorm{L(W,G)-L_0(\widetilde{W},\widetilde{G})}=\lo_P(1)\;.
$$
\end{proposition}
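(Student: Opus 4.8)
The plan is to reduce this to Lemma \ref{lemma:approxCGLMatGeneralModif}, following exactly the route used in the proof of Proposition \ref{prop:connectionGraphLaplacianApproxModifS}; the only real work is to convert the additive control of the squared dissimilarities into a multiplicative control of the off-diagonal Gaussian affinities.

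First I would exhibit the scaling factors. Take $f_i := \exp(-\xi_n/\nu)$ for every $i$; this is a legitimate choice since $\xi_n$ is a fixed real number, $f_i>0$, and Lemma \ref{lemma:approxCGLMatGeneralModif} imposes no constraint whatsoever on the $f_i$'s. Writing $r_{i,j} := \mathsf{d}_{i,j,noisy}^2 - \mathsf{d}^2_{i,j,clean} - \xi_n$, the hypothesis is that $\rho_n := \sup_{i\neq j}|r_{i,j}| = \lo_P(1)$. For $i\neq j$ one has $\tilde w_{i,j} = \exp\bigl(-(\mathsf{d}^2_{i,j,clean} + \xi_n + r_{i,j})/\nu\bigr) = w_{i,j}\,f_i\,\exp(-r_{i,j}/\nu)$, hence $\tilde w_{i,j}/f_i = w_{i,j}\exp(-r_{i,j}/\nu)$ and
$$
\left|\frac{\tilde w_{i,j}}{f_i} - w_{i,j}\right| = w_{i,j}\,\bigl|\exp(-r_{i,j}/\nu) - 1\bigr| \le \bigl|\exp(-r_{i,j}/\nu) - 1\bigr|\;,
$$
using $0\le w_{i,j}\le 1$ (as $\nu>0$ and $\mathsf{d}^2_{i,j,clean}\ge 0$). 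On the event $B_n := \{\rho_n\le\nu\}$, which has probability tending to $1$, the elementary bound $|e^x-1|\le e|x|$ for $|x|\le 1$ yields $\sup_{i\neq j}|\tilde w_{i,j}/f_i - w_{i,j}|\le (e/\nu)\rho_n =: \eps$, and $\eps = \lo_P(1)$. This is precisely the off-diagonal multiplicative weight hypothesis of Lemma \ref{lemma:approxCGLMatGeneralModif}.

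Next I would note that the remaining hypotheses of Lemma \ref{lemma:approxCGLMatGeneralModif} are granted by assumption: boundedness of $w_{i,j}$ (by $1$), of $\norm{G_{i,j}}_F$ and of $\norm{\widetilde{G}_{i,j}}_F$ (by a constant $C$); the connection control $\sup_{i,j}\norm{\widetilde{G}_{i,j}-G_{i,j}}_F\le\eta$ with $\eta=\lo_P(1)$; and $\inf_i\sum_{j\neq i}w_{i,j}/n\ge\gamma$ with $\gamma$ bounded below, independently of $n$. Since $\eps$ and $\eta$ are $\lo_P(1)$ and $\gamma$ is bounded away from $0$, the condition $\gamma>\eps$ holds on an event of probability tending to $1$; intersecting it with $B_n$ and applying Lemma \ref{lemma:approxCGLMatGeneralModif} there gives
$$
\opnorm{L(W,G) - L_0(\widetilde{W},\widetilde{G})} \le \frac{C}{\gamma}(\eta+\eps) + \frac{\eps}{\gamma(\gamma-\eps)}C^2 + \frac{C^2}{n\gamma}\;.
$$
Each term on the right is $\lo_P(1)$ --- the first two because $\eta,\eps=\lo_P(1)$ while $\gamma$ is bounded below, the last because $n\to\infty$ --- and since the complement of the event on which this bound holds has vanishing probability, we conclude $\opnorm{L(W,G) - L_0(\widetilde{W},\widetilde{G})}=\lo_P(1)$.

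I do not expect a genuine obstacle here: the statement is a repackaging of Lemma \ref{lemma:approxCGLMatGeneralModif} meant to cover dissimilarities other than the RID. The only points requiring care are (i) the exponentiation step, where an additive $\lo_P(1)$ control of squared dissimilarities must be turned into a multiplicative $\lo_P(1)$ control of affinities, which forces one to work on the high-probability event $\{\rho_n\le\nu\}$, and (ii) the bookkeeping needed because $\eps$ and $\eta$ are random whereas Lemma \ref{lemma:approxCGLMatGeneralModif} is stated for deterministic parameters, so the lemma must be invoked conditionally on a high-probability event before passing to the $\lo_P$ conclusion.
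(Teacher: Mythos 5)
Your proposal is correct and follows essentially the same route as the paper's proof: both reduce to Lemma \ref{lemma:approxCGLMatGeneralModif} via the choice $f_i=\exp(-\xi_n/\nu)$, with the paper leaving the exponentiation step (converting the additive $\lo_P(1)$ control of squared dissimilarities into a multiplicative $\lo_P(1)$ control of the off-diagonal affinities) implicit. Your version merely spells out that step and the high-probability-event bookkeeping, both of which are handled correctly.
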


\begin{proof}
This proposition is just a consequence of Lemma \ref{lemma:approxCGLMatGeneralModif}. Indeed, the affinities are all bounded by 1. Furthermore, we can use $f_i=\exp(-\xi_n/\nu)$ and all the approximation results needed in Lemma \ref{lemma:approxCGLMatGeneralModif} are true, so the result follows.  
\end{proof}

\subsubsection{A situation without robustness to noise}

So far, our work has been quite general and has shown that when the noise is Gaussian (or Gaussian-like) and its covariance $\Sigma_i$ is such that $\trace{\Sigma_i}=\trace{\Sigma_j}$ for all $i,j$, CGL algorithms can be made robust to noise. 

It has been recognized \cite{DiaconisFreedmanProjPursuit84,nekCorrEllipD,nekInfoPlusNoiseKernelMatrices10,NEKRobustPaperPNAS2013Published} that to study the robustness of various statistical procedures in high-dimension, it is essential to move beyond the Gaussian-like situation and study for instance elliptical/scale mixture of Gaussian models. This largely due to the peculiar geometry of high-dimensional Gaussian and Gaussian-like vectors (see above references and \cite{HallMarronNeemanJRSSb05}).

If we now write down a model for the noise where $N_i=\lambda_i Z_i$, where $Z_i$ are i.i.d ${\cal N}(0,\Sigma)$, $\lambda_i$'s are i.i.d with $\Exp{\lambda_i^2}=1$ and $\lambda_i \in \mathbb{R}$ is independent of $Z_i$, it is easy to modify our analysis (assuming for instance that $\lambda_i^2$ are bounded, though this condition could easily be relaxed) and to realize that our main approximation result in Proposition \ref{prop:controlApproxdijNoisy} is replaced by 
$$
\sup_{i\neq j}\left|d_{ij,\text{noisy}}^2-d_{ij,\text{clean}}^2-[\lambda_i^2+\lambda_j^2]\trace{\Sigma}\right|=\lo_P(1)\;.
$$
In this situation, Theorem \ref{thm:consistencyRotations} is still valid. However, Proposition \ref{prop:connectionGraphLaplacianApproxModifS} is not valid anymore. The matrix $L_0(\widetilde{W},\widetilde{G})$ can be approximated by a matrix that depends both on the signal and the distribution of the $\lambda_i^2$'s. And there is no guarantee in general that this matrix will have approximately the same spectral properties as $L(W,G)$ or $L_0(W,G)$, the  CGL matrix generated from the noise-free signals. 
This suggests that even our modification of the original CGL algorithm will not be robust to this ``elliptical''-noise contamination.

\section{Numerical work}
Although the robustness properties of CGL methods were not well studied in the past, these methods have been successfully applied to different problems; for example, \cite{singer_zhao_shkolnisky_hadani:2011,singer_wu:2012,Zhao_Singer:2013,Marchesini_Tu_Wu:2014,Cucuringu_Singer_Cowburn:2012,Alexeev_Bandeira_Fickus_Mixon:2013}. In this section, we show simulated examples to illustrate the practical performance of our theoretical findings about CGL methods. We refer interested readers to the aforementioned papers for details and results of its applications.

To demonstrate the main finding of this paper - that CGL methods are robust to high-levels of noise in the spectral sense - we take the noise to be a random Gaussian vector $Z\sim \mathcal{N}(0,cI_p/p^{\alpha})$, where $\alpha\leq 1$ and $c>0$. Note that the amount of noise, or the trace of the covariance matrix of $Z$, is $cp^{1-\alpha}$ and will blow up when $p\to \infty$ and $\alpha<1$.

\subsection{$1$-dim manifold}\label{Section:numerical:1dim}
Our first example is a dataset sampled from a low dimensional manifold, which is embedded in a high dimensional space. This dataset can be viewed as a collection of high dimensional points which is (locally) parametrized by only few parameters\footnote{By definition, although locally the manifold resembles Euclidean space near a point, globally it might not. Thus, in general we can only parametrize the manifold locally. This feature captures the possible nonlinear structure in the data.}, but in a nonlinear way. 

As a concrete example, we take the twisted bell-shaped simple and closed curve, denoted as $\manifold$, embedded in the first $3$ axes of $\RR^p$, where $p\gg 2$, via $\iota:[0,2\pi)\to \RR^p$:
\[
\iota:\,t\mapsto [\,\cos(t),\,(1-0.8e^{-8\cos^2t})\cos(\pi(\cos(t)+1)/4),\,(1-0.8e^{-8\cos^2t})\sin(\pi(\cos(t)+1)/4),\,0,\ldots,0\,]\in\RR^p\,.
\] 
$\manifold$ is a 1-dim smooth manifold without boundary; that is, no matter how big $p$ is, locally the points on $\manifold$ can be parametrized by only $1$ parameter. See Figure \ref{fig:numerical1:clean} (A) for an illustration. We mention that one interesting dataset of this kind is the 2-D tomography from noisy projections taken at unknown random directions \cite{singer_wu:2013a}.

For our numerical work, we independently sample $n$ points uniformly at random from $[0,2\pi)$. Due to the non-linear nature of $\iota$, it is equivalent to non-uniformly sampling $n$ points from $\manifold$ independently. Denote the clean data as $\mathcal{Y}=\{y_i\}_{i=1}^n\subset \manifold$. The data $\mathcal{X}=\{x_i\}_{i=1}^n$ we analyze is the clean data contaminated by noise, i.e $x_i=y_i+Z_i$, with $Z_i$ i.i.d with the same distribution as $Z$. We measure the {\it signal-to-noise ratio} of the dataset by the quantity $\text{snrdb}:=20\log\frac{\sqrt{\EE X^TX}}{\sqrt{\EE Z^TZ}}$. We take $n=p=1000$ and $\alpha=1/4$. Note that $\alpha=1/4$ is the critical value in our analysis beyond which our results do not apply. It corresponds to a high-noise level; for example, the snrdb will be $-9.25$ and $-18.73$ respectively when $c=0.25,\,0.4$.

Then, we build up the connection graph by setting $\graphV:=\mathcal{X}$ and $\graphE:=\{(i,j);\,i,j\in \graphV\}$. Note that in practice, it is common to use a nearest-neighbor scheme to build up the graph, denoted as $\graphG^{\text{NN}}$, for the sake of computational efficiency. However, since the sets of nearest neighbors are not stable under the action of the noise, we also consider here the complete graph scheme, denoted as $\graphG$.  Next we assign the weight function as $w:(i,j)\mapsto e^{-\|x_i-x_j\|^2_{\RR^p}/m}$, where $m$ is the $25\%$ quantile of all Euclidean distances between pairs of $(x_i,x_j)\in\graphE$, and the connection function is defined to be a trivial one, that is, $\relationship(i,j)=1$ for all $(x_i,x_j)\in\graphE$. 

With the connection graph, we build up the CGL matrix (in this $1$-dim manifold with the trivial connection, it is equivalent to the graph Laplacian (GL)) from $\graphG^{\text{NN}}$ and $\graphG$ for comparison, denoted as $L^{\text{NN}}(\widetilde{W},\widetilde{G})$ and $L(\widetilde{W},\widetilde{G})$ respectively (see (\ref{definition:LWG})).
We have seen in the analysis described earlier in the paper that, when $\alpha<1$, we have to remove the diagonal terms of the CGL matrix in order to preserve spectral properties. So, we also consider the matrix $L_0(\widetilde{W},\widetilde{G})$ for the comparison. 

We then evaluate the eigenvalues and eigenvectors of the above three different CGL's. To simplify the notation, we use the same notations to denote the eigenvectors $u_1,u_2,u_3\ldots\in\RR^n$ associated with the eigenvalues $1=\lambda_1>\lambda_2\geq \lambda_3\geq\ldots\geq 0$. We now show two sets of results to demonstrate the robustness of the CGL methods studied in this paper.

\underline{Dimension Reduction and Data Visualization}: To achieve this, we may embed the sampled points into $\RR^m$ by the {\it truncated diffusion maps (tDM) with time $t>0$ and precision $\delta>0$}:
\[
\Phi_{t,m}:\,x_i\mapsto (\lambda_2^t u_2(i),\, \lambda_3^t u_3(i),\,\ldots, \lambda_{m+1}^t u_{m+1}(i))\in \RR^m,
\]
where $\lambda_{m+1}>\delta$ and $\lambda_{m+2}\leq \delta$; that is, we map the $i$-th data point to $\RR^m$ using the first $m$ non-trivial eigenvectors of the CGL. We choose $\delta=0.2$ in this simulation. The embedding results of $\mathcal{Y}$, $\Phi_{1,3}$, based on the above different CGL's are shown in Figure \ref{fig:numerical1:clean}, and the results from $\mathcal{X}$ with $c=0.4$ are shown in Figure \ref{fig:numerical1:smallNoise}. Ideally, we would expect to recover the ``parametrization'' of the dataset by the idea that the eigenvectors of the CGL represent a set of new coordinates for the data points, so the high dimensional dataset can be visualized in this new set of coordinates or its dimension can be reduced. In this specific example, we would expect to find a simple and closed curve out of the noisy dataset which represents the dataset in $\RR^3$. Clearly when the dataset is clean, we succeed in the task no matter which CGL we use. However, if the dataset is noisy, at high-noise levels, the embedding might not be that meaningful if we use $L^{\text{NN}}(\widetilde{W},\widetilde{G})$ or $L(\widetilde{W},\widetilde{G})$. Indeed, as shown in Figure \ref{fig:numerical1:smallNoise},  with $L^{\text{NN}}(\widetilde{W},\widetilde{G})$ the structure of the dataset is barely recovered; with $L(\widetilde{W},\widetilde{G})$, even though we can get the simple closed curve\footnote{The main idea behind tDM is embedding the dataset to a lower dimensional Euclidean space so that the structure underlying the data can be extracted. Please see Section \ref{subsection:Appendix:VDMDM} for details.} back, there are several outliers which might deteriorate the interpretation. In this noisy case, we can only succeed in the task if we choose $L_0(\widetilde{W},\widetilde{G})$, as is discussed in this paper.

\begin{figure}
\begin{center}
\includegraphics[width=0.9\columnwidth]{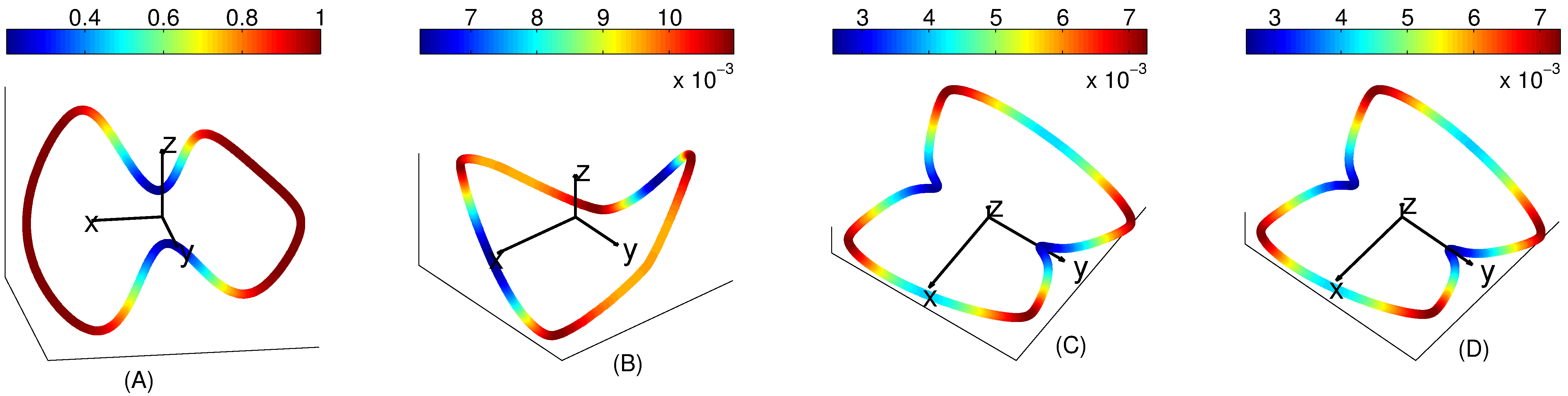}
\end{center}
\caption{\small 
Clean samples from the twisted bell-shaped manifold. (A): the clean samples. Here we only plot the first $3$ axes of the high dimensional data $\mathcal{Y}$. The color of each point is a surrogate of the norm of each embedded point -- blue means a relative small norm and dark red means a relative large norm; the scale above the figure refers to $\{\norm{x_i}_2\}_{i=1}^n$, i.e the norm of the data vectors in $\mathbb{R}^p$.(B): the results of the truncated diffusion maps (tDM), $\Phi_{1,3}$, when the connection graph is $\graphG^{\text{NN}}$ and the diagonal entries are not removed, where the number of nearest neighbors is chosen to be $100$; (C): the result of tDM, $\Phi_{1,3}$, when the connection graph is $\graphG$ and the diagonal entries are not removed; (D): the result of tDM, $\Phi_{1,3}$, when the connection graph is $\graphG$ and the diagonal entries are removed. Note that without surprise, the ``parametrization'' of the bell shaped manifold is recovered in (B), (C) and (D). For (B), (C), and (D), the scales above the figures refer to the norm of $\{\Phi_{1,3}(x_i)\}_{i=1}^n$; those vectors are of course 3-dimensional, which explains the difference in magnitude of our scales.
}\label{fig:numerical1:clean}
\end{figure}

\begin{figure}
\begin{center}
\includegraphics[width=0.9\columnwidth]{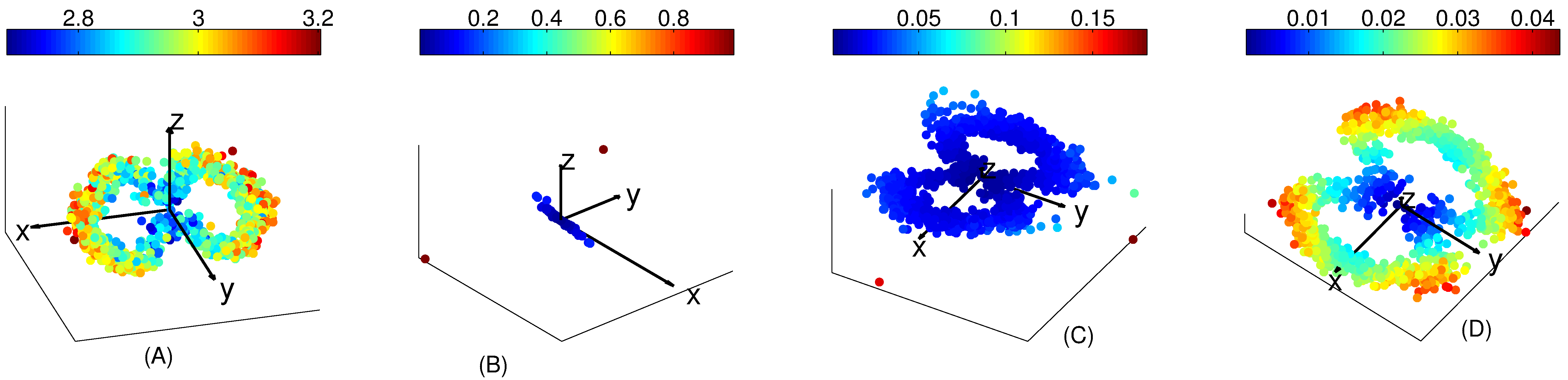}
\end{center}
\caption{\small Noisy samples from the twisted bell-shaped manifold with $\alpha=1/4$ and $c=0.4$. (A): the noisy samples. Note that we only plot the first $3$ axes of the data $\mathcal{X}$, so the ``big'' noise seems small, since 997 out of 1000 coordinates are not plotted. To emphasize the relationship among data points, the color of each point is a surrogate of the norm of each embedded point -- blue means a relative small norm and dark red means a relative large norm; the scale above the figure refers to $\{\norm{x_i}\}_{i=1}^n$, i.e the norm of our 1000-dimensional vectors. (B): the results of the truncated diffusion maps (tDM), $\Phi_{1,3}$, when the connection graph is $\graphG^{\text{NN}}$ and the diagonal entries are not removed, where the number of nearest neighbors is chosen to be $100$. We can barely see the circle structure in the middle, and there are several big outliers; (C): the result of tDM, $\Phi_{1,3}$, when the connection graph is $\graphG$ and the diagonal entries are not removed. Note that when compared with (B), the embedding is better in the sense that the ``parametrization'', the simple and close curve, is better recovered. But we can still observe several outliers; (D): the result of tDM, $\Phi_{1,3}$, when the connection graph is $\graphG$ and the diagonal entries are removed. Note that compared with (C), the embedding is yet better in the sense that the number of  outliers is reduced and the parametrization of the manifold is recovered. Note that for (B), (C),(D), the scale above the figures refer to $\{\norm{\Phi_{1,3}(x_i)}\}_{i=1}^n$, which are 3-dimensional vectors. The different scales indicate the presence of outliers. Compare also with the scales in Figure \ref{fig:numerical1:clean}, (B), (C), (D).}\label{fig:numerical1:smallNoise}
\end{figure}

\underline{Nearest Neighbors Estimation}
Estimating nearest neighbors of a given data point from a noisy dataset is not only important but also challenging in practice (for example, it is essential in the class averaging algorithm for the cryo-EM problem). This problem is directly related to  local geodesic distance estimation when the dataset is modeled by the manifold. Their theoretical properties make diffusion maps and vector diffusion maps particularly well-suited for these tasks. To determine the neighbors, we need the notion of distance. In addition to the naive $L^2$ distance between points, we consider the {\it diffusion distance} between two points $x_i,x_j\in\mathcal{X}$ by
\[
d_{\text{DD}}(x_i,x_j):=\|\Phi_{t,m}(x_i)-\Phi_{t,m}(x_j)\|_{\RR^m}.
\]
Then, we determine the nearest neighbors of each data point based on these distances, where we choose $t=1$ and $\delta=0.2$ for the diffusion distance. More precisely, we first determine $10$ nearest neighbors of $x_i$, denoted as $x_{i_j}$, $j=1,\ldots,10$, from the noisy dataset $\mathcal{X}$, for all $i$. Then, since we know the ground truth, we may check the true relationship between $y_{i}$ and $y_{i_{j}}$, $j=1,\ldots,10$, i.e $d_{\text{DD}}(y_i,y_{i_j})$ for various CGL methods, or $\norm{y_i-y_{i_j}}$ if we use $L^2$ distance. Clearly, if the method preserves nearest neighbor information, at least approximately, the ranks of the $y_{i_j}$'s measured in terms of distances to $y_i$ should be small. To quantify the estimation accuracy, we collect the ranks of all estimated nearest neighbors, and plot the cumulative distribution results in Figure \ref{figure:numerical1:nnEst}. In other words, if we call $R_{i_j}$ the rank of $y_{i_j}$ in terms of distance to $y_i$, we plot the cdf of $\{\{R_{i_j}\}_{j=1}^{10}\}_{i=1}^n$ for the various distances we use. (There are many other methods one could use to do these comparisons, such as using Kendall's $\tau$ and variants (see \cite{comparingTopKLists2003}). The one we use here has the benefit of simplicity.) When the dataset is clean, all methods perform the same, as is predicted in Theorem \ref{thm:geod:DM}. It is clear from the results that when the noise is large, the result based on the $L^2$ distance is much worse than the others. The performance based on the diffusion distance from $L^{\text{NN}}(\widetilde{W},\widetilde{G})$ is better when the noise level is not big, but still a non-negligible portion of error exists; the results based on $L(\widetilde{W},\widetilde{G})$ and $L_0(\widetilde{W},\widetilde{G})$ are much better, while the result based on $L_0(\widetilde{W},\widetilde{G})$ is the best.

In conclusion, in addition to showing the robustness of CGL to noise, we have demonstrated the spectrally close relationship between $L_0(\widetilde{W},\widetilde{G})$ and $L(W,G)$, which is proved in Proposition \ref{prop:connectionGraphLaplacianApproxModifS}.

\begin{figure}
\begin{center}
\includegraphics[width=0.24\columnwidth]{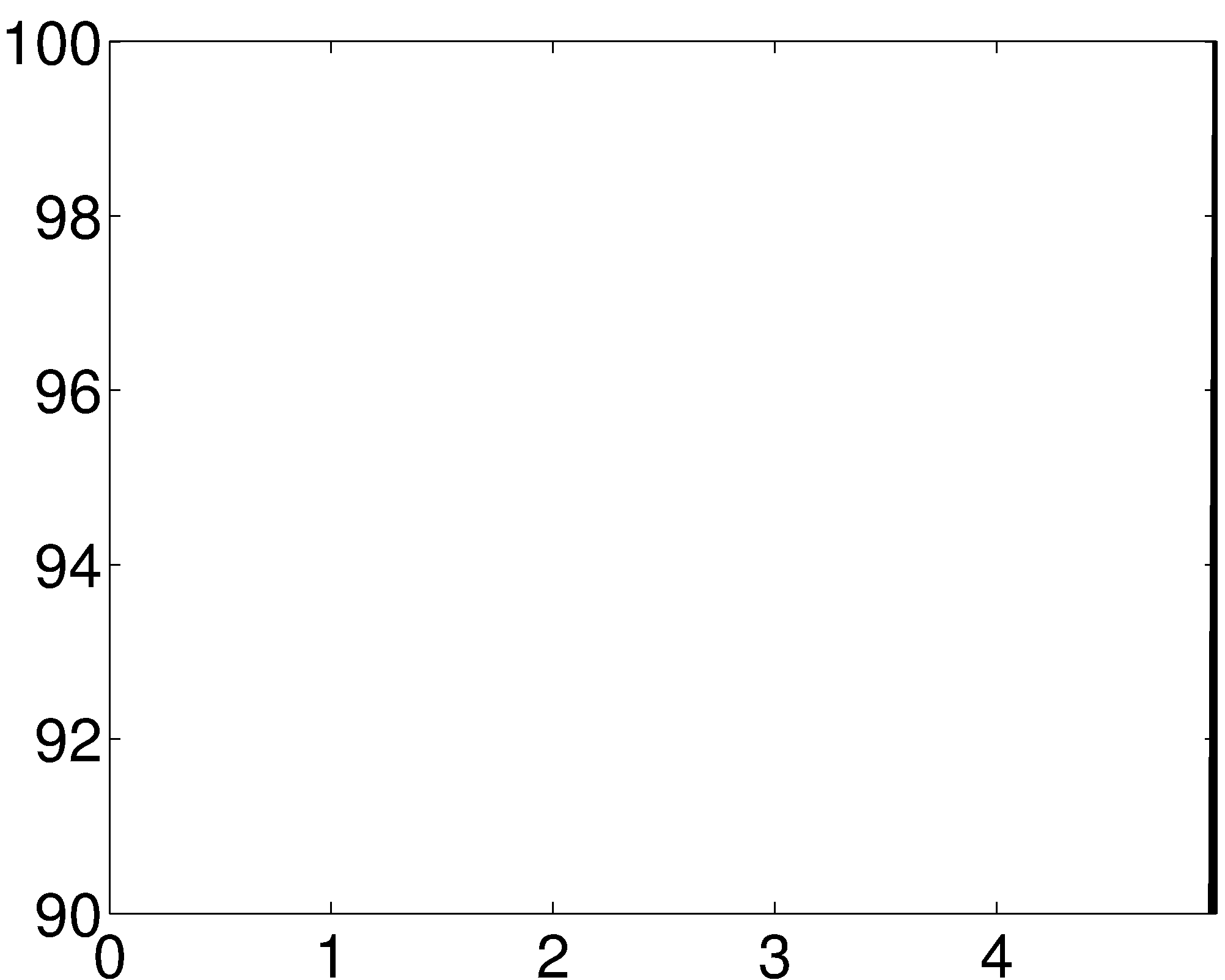}
\includegraphics[width=0.24\columnwidth]{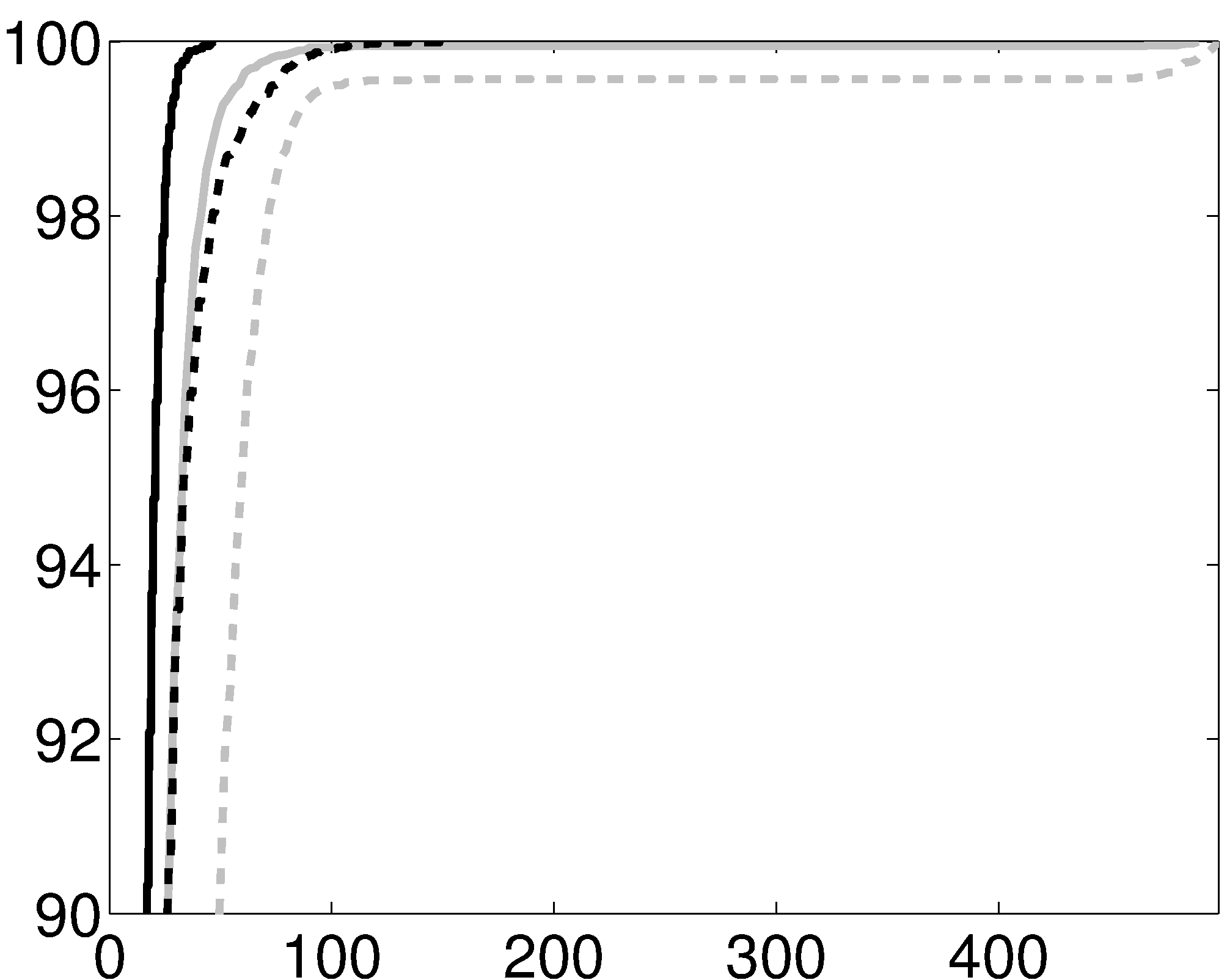}
\includegraphics[width=0.24\columnwidth]{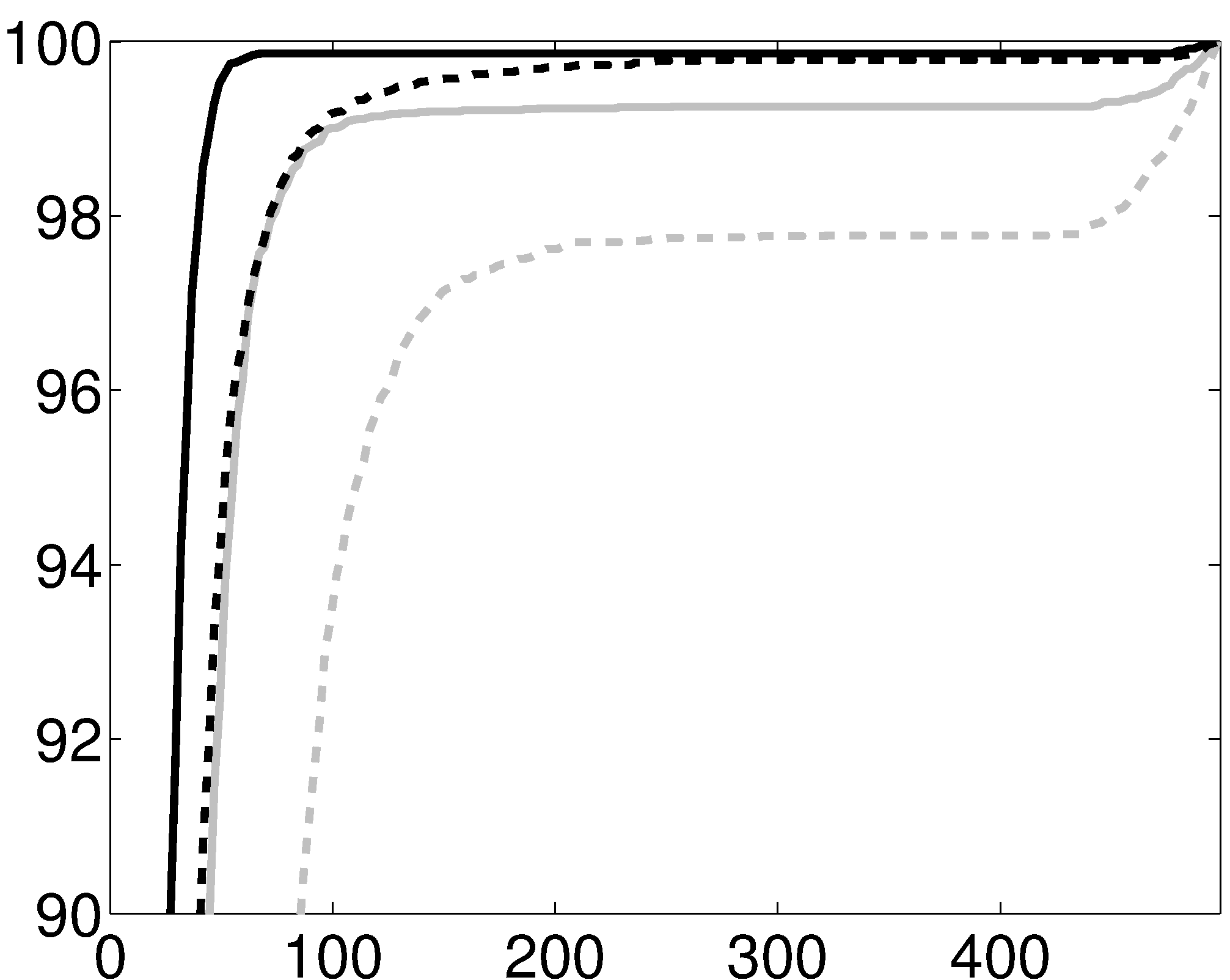}
\includegraphics[width=0.24\columnwidth]{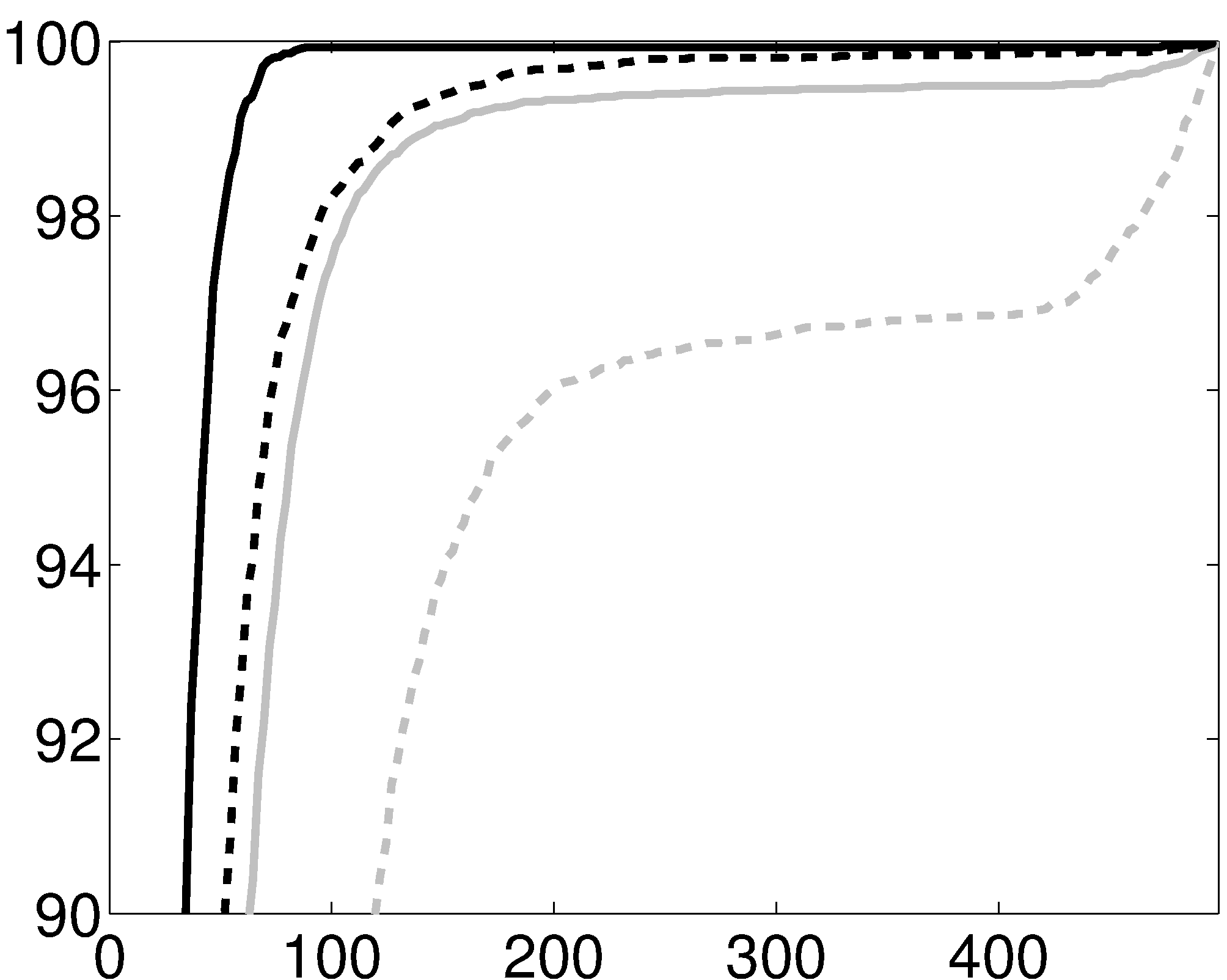}
\end{center}
\caption{\small The result of nearest neighbors estimation. In all subfigures, the x-axis is the true rank of an estimated nearest neighbor and the y-axis is its cumulative distribution. To emphasize the difference, we only show the area ranging from $90\%$ to $100\%$ in the y-axis. The gray dashed (gray, black dashed and black respectively) curve is the cumulative distribution of the true ranks of the estimated nearest neighbors estimated from the ordinary Euclidean distance (diffusion distance based on $L^{\text{NN}}(\widetilde{W},\widetilde{G})$, $L(\widetilde{W},\widetilde{G})$ and $L_0(\widetilde{W},\widetilde{G})$ respectively). From left to right: clean samples from the bell shaped manifold, noisy samples with $\alpha=1/4$ and $c=0.25,0.4,0.5$ respectively. It is clear that when the noise is large, the result based on the $L^2$ distance is much worse than the others. The result based on $L^{\text{NN}}(\widetilde{W},\widetilde{G})$ is slightly better, but not that ideal, $L(\widetilde{W},\widetilde{G})$ is even better and $L_0(\widetilde{W},\widetilde{G})$ is the best.
}\label{figure:numerical1:nnEst}
\end{figure}

\subsection{$2$-dim images}\label{Section:numerical:2dim}

In Subsection \ref{Section:numerical:1dim}, we investigated numerically the influence of noise on CGL methods when the connection function is trivial. In this subsection, we discuss an example where the connection function plays an essential role in the analysis. We consider a dataset which contains randomly rotated versions of a set of objects, and the task is to align these objects in addition to classifying them. We encounter this kind of datasets and problems in, for example, image processing \cite{singer_zhao_shkolnisky_hadani:2011,singer_wu:2012,Zhao_Singer:2013}, shape analysis \cite{Huang_Su_Guibas:2013}, phase retrieval problems \cite{Marchesini_Tu_Wu:2014,Alexeev_Bandeira_Fickus_Mixon:2013}, etc. In \cite{singer_zhao_shkolnisky_hadani:2011,singer_wu:2012,Zhao_Singer:2013,Marchesini_Tu_Wu:2014,Alexeev_Bandeira_Fickus_Mixon:2013} and others, the CGL methods have been applied to solve the problem.

To focus specifically on demonstrating the influence of noise on this problem, we work with 2-dimensional images observed in polar coordinates. If an image is defined in Cartesian coordinates, then in general a numerical rotation will introduce numerical artifacts and errors since resampling or interpolation procedures are then involved to compare two rotated images. These numerical issues are alleviated if we work with polar coordinates. To further minimize these numerical artifacts, we use surrogate images as our dataset -- by a {\it surrogate image}, we mean a function defined on the circle $S^1$, which is discretized into $p$ equally spaced points. In other words, we consider images defined in polar coordinates, where we only have one sample on the radial axis.

Now we discretize the $2\times 2$ rotational group, $SO(2)$, which is the same as the circle $S^1,$ into $p$ equally spaced points, that is,  ${\mathcal T}^{(2)}:=\{e^{i2\pi k/p}\}_{k=1}^p$ - the sample $e^{i2\pi k/p}\in {\mathcal T}^{(2)}$ simply rotates vectors in $\mathbb{R}^2$ by an angle $2\pi k/p$. Note that since the surrogate images are defined on $p$ equally spaced points on $S^1$, the rotations in ${\mathcal T}^{(2)}$ act exactly on the images without introducing any numerical error. 
We choose $n_K$ different surrogate images, denoted as $\{f_i\}_{i=1}^{n_K}\subset \RR^{p}$. Then we randomly and independently rotate each of them by $n_R$ angles; that is, for all $k=1,\ldots,n_K$ and $l=1,\ldots,n_R$, we have $S_i:=R_{k,l}\circ f_k$, where $R_{k,l}\in{\mathcal T}^{(2)}$, $R_{k,l}\circ f_k$ means rotating $f_k$ by $R_{k,l}$ and $i=(k-1)n_R+l$. We assume that $\argmin_{R\in \mathcal{T}^{(2)}}\|f_i-R\circ f_j\|>0$, for all $i,j=1,\ldots,n_K$; that is, the image $f_i$ is not a rotated version of another one $f_j$. In the end we get $n=n_Kn_R$ randomly rotated images $\{S_{i}\}_{i=1}^n\subset\RR^{p}$. Denote by $\sigma$ the standard deviation of all pixels of all images in $\{f_i\}_{i=1}^{n_K}$. The data $\mathcal{X}=\{I_i\}_{i=1}^n$ we analyze is the clean data contaminated by the noise which is i.i.d. sampled from $Z$, that is, we have $I_i=S_i+Z_i$. 

We now build up the connection graph by setting $\graphV:=\{I_i\}_{i=1}^n$ and $\graphE:=\{(i,j);\,I_i,I_j\in \graphV\}$; that is, we take the complete graph scheme. Next we assign the weight function as $w:(i,j)\mapsto e^{-d^2_{\text{RID}}(I_i,I_j)/m}$, where $m$ is the $25\%$ quantile of all non-zero RID distances defined on $\graphE$, and the connection function as $\relationship:(i,j)\mapsto \argmin_{R\in \mathcal{T}^{(2)}}\|I_i-R\circ I_j\|$. For comparison purposes, we also take the nearest neighbor scheme to construct the connection graph, denoted by $(\graphG^{\text{NN}},w^{\text{NN}},\relationship^{\text{NN}})$, where we choose $100$ nearest neighbors - as defined by the RID distance -  to construct edges. When the images are noise-free, due to the connection function, we can recover $R_{k,l}$'s up to a rotation from the top eigenvector $v_1$ of different CGL's built up from different connection graphs, $(\graphG,w,\relationship)$ or $(\graphG^{\text{NN}},w^{\text{NN}},\relationship^{\text{NN}})$, with or without removing the diagonal entries. To simplify the notation, we will use the same notation $v_1$ to denote the top eigenvector of the different CGL's. Precisely, the estimated rotation is built up from $v_1$, denoted as $v\in\CC^{n}$, by setting $v(i)=\frac{v_1(i)}{|v_1(i)|}$ when $|v_1(i)|>0$ and $v(i)=1$ when $|v_1(i)|=0$. (In a slight departure from the descriptions given earlier in the paper, the $r_{i,j}$'s are not $2\times 2$ matrices here, but complex numbers describing the corresponding rotations. Hence, $v_1$ is in $\CC^n$. If we had used $2\times 2$ matrices, $v_1$ would have been in $\RR^{2n}$ and we could have computed the vector $v$ by using pairs of entries of $v_1$.)

To evaluate the performance of the estimated rotation when noise exists, we construct a complex vector $u\in\CC^{n}$ whose $i$-th entry - where $i=(k-1)n_R+l$,  $k=1,\ldots,n_K$ and $l=1,\ldots,n_R$ -  is the complex form of $R_{k,l}$. We then evaluate the difference between the estimated rotation of the $l$-th object and the ground truth by observing the angle of $u(i)^*v(i)$. In other words, this quantity shows the discrepancy between the true rotation and the estimated rotation.
To visualize this result, we plot the vector $z\in \RR^{n}$ where $z(i)$ is the angle of the complex number $u(i)^*v(i)$. In Figure \ref{fig:numerical2}, the resulting $z$'s with $p=1000$, $n_K=5$, $n_R=200$, $\alpha=1/4$ and $c=6\sigma$ are illustrated. Note that since there are $5$ different images, we see a piecewise function with $5$ different values when the images are clean. When noise exists, we can see clearly the benefit of removing the diagonal entries (see Figure \ref{fig:numerical2}, (H)). 

We mention that depending on the problem, the affinity function and the connection function are constructed in different ways (see, for example \cite{singer_zhao_shkolnisky_hadani:2011,singer_wu:2012,Zhao_Singer:2013,Huang_Su_Guibas:2013,Marchesini_Tu_Wu:2014,Alexeev_Bandeira_Fickus_Mixon:2013}). 
Also, the CGL is only one of several techniques we could use to analyze datasets described by the connection graph. We might consider other techniques, such as, semidefinite programming relaxation \cite{Wang_Singer:2013}, nonlinear independent component analysis \cite{Talmon_Cohen_Gannot_Coifman:2013}, and other methods, to obtain the information we are interested in, reorganize the data, etc...

\begin{figure}
\begin{center}
\includegraphics[width=0.45\columnwidth]{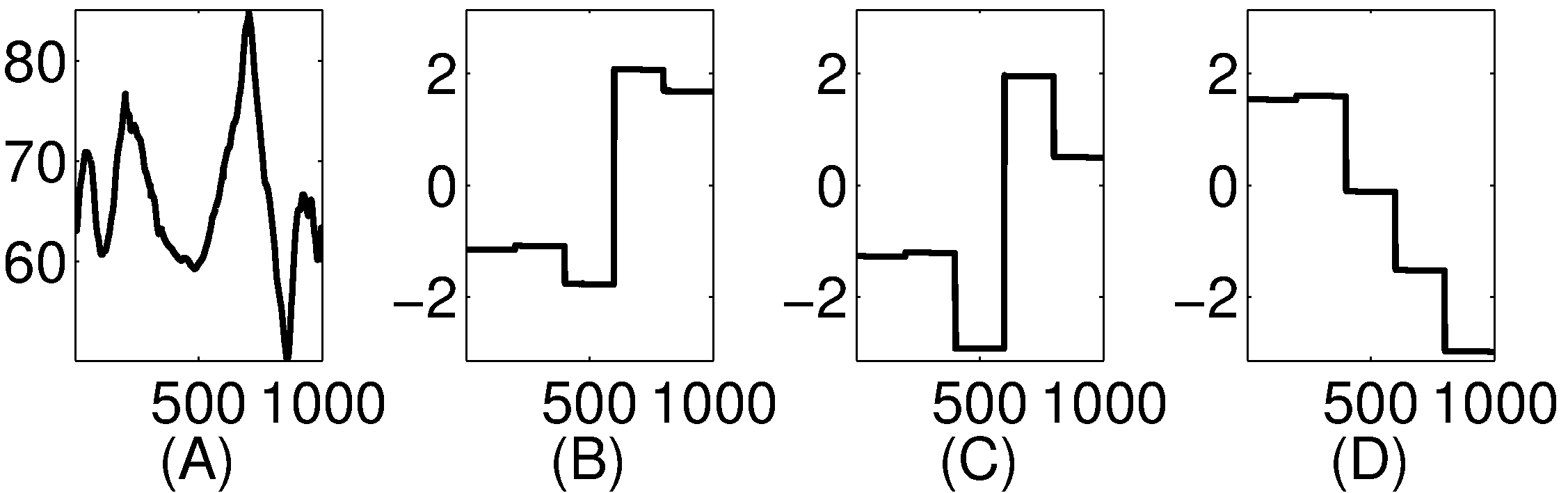}
\includegraphics[width=0.45\columnwidth]{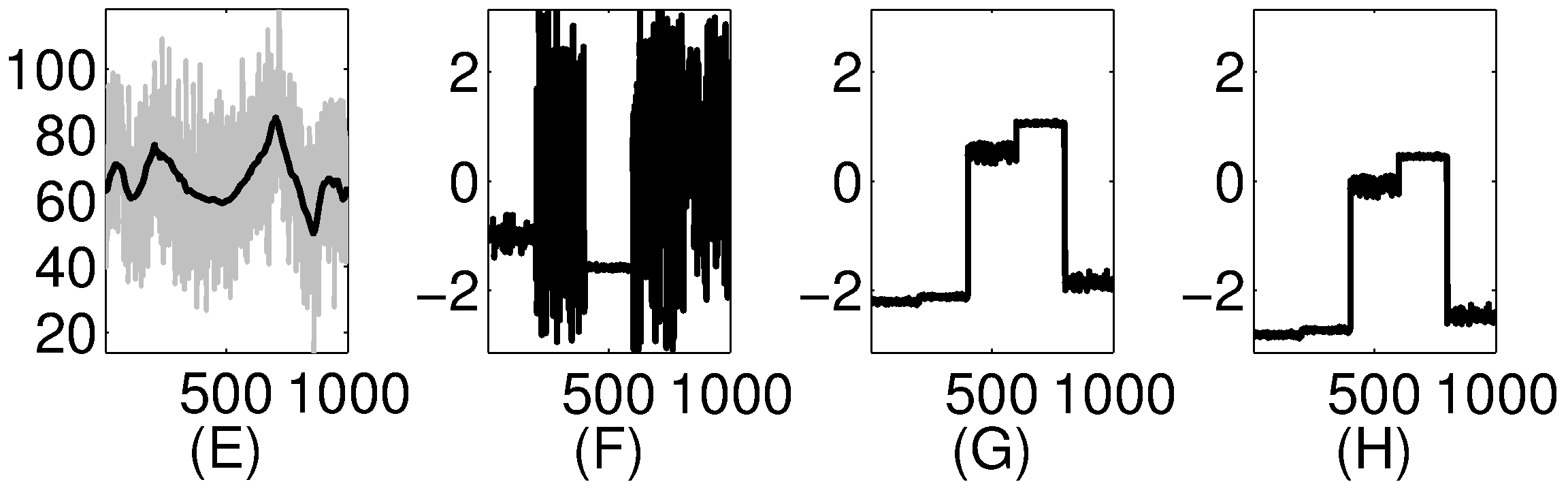}
\end{center}
\caption{\small (A): a clean surrogate image. (B)-(D): alignment vectors $z$ computed from clean images; (E): a noisy surrogate image. (F)-(H): alignment vectors $z$ computed from noisy images with $c=6\sigma$. (A) and (E): the black curve is a clean surrogate image, and the gray curve is its noisy version; (B) and (F): the result from the CGL built up from $\graphG^{\text{NN}}$; (C) and (G): the result from the CGL built from $\graphG$ and the diagonal entries are not removed; (D) and (H): the result from the CGL built from $\graphG$ with the diagonal entries removed. It is clear that when the images are clean, all different CGL's give equivalent results. But in the presence of noise, the CGL built up from $\graphG^{\text{NN}}$ is obviously worse.}\label{fig:numerical2}
\end{figure}

\bibliographystyle{plain}
\bibliography{research,noisyManifold} 

\newpage

	\appendix
	\begin{center}
	\textbf{\textsc{APPENDIX}}\\
	to ``Connection graph Laplacian methods can be made robust to noise''
	\end{center}
	\renewcommand{\thesection}{\Alph{section}}
	\renewcommand{\theequation}{\Alph{section}-\arabic{equation}}
	\renewcommand{\thelemma}{\Alph{section}-\arabic{lemma}}
	\renewcommand{\thecorollary}{\Alph{section}-\arabic{corollary}}
	\renewcommand{\thesubsection}{\Alph{section}-\arabic{subsection}}
	\renewcommand{\thesubsubsection}{\Alph{section}-\arabic{subsection}.\arabic{subsubsection}}
	\setcounter{equation}{0}  
	\setcounter{lemma}{0}
	\setcounter{corollary}{0}
	\setcounter{section}{0}
	
\section{Technical results}
\subsection{On quadratic forms}\label{subsec:ConcQuadForms}
\begin{lemma}\label{lemma:controlSupQuadForms}
Suppose $Z_1,\ldots,Z_n$ are random vectors in $\mathbb{R}^p$, with $Z_i=\Sigma_i^{1/2} X_i$, where $X_i$ has mean 0 and covariance $\id_p$. We further assume that for every convex 1-Lipschitz function $f$, if $m_{f(X_i)}$ is a median of $f(X_i)$, $P(|f(X_i)-m_{f(X_i)}|>t)\leq 2 \exp(-c_it^2)$. $Z_i$'s are possibly dependent. Let $\{Q_i\}_{i=1}^n$ be $p\times p$ positive definite matrices. Call $\opnorm{Q_i}$ the largest eigenvalue of $Q_i$. Then we have 
$$
\sup_{1\leq i \leq n}\left|\sqrt{Z_i\trsp Q_i Z_i}-\Exp{\sqrt{Z_i\trsp Q_i Z_i}}\right|=\bO_P(\sup_i \sqrt{\opnorm{Q_i\Sigma_i/c_i}} \sqrt{\log n})\;.
$$ 
This implies that, when $\sup_i \sqrt{\opnorm{Q_i\Sigma_i/c_i}} \sqrt{\log n}\tendsto 0$\;,
$$
\sup_{1\leq i \leq n}\left|Z_i\trsp Q_i Z_i-\trace{\Sigma_i Q_i}\right|=\bO_P(\sup_i \sqrt{\opnorm{Q_i\Sigma_i/c_i}}\sqrt{\log n} \left[\sup_i \sqrt{\trace{\Sigma_i Q_i}}\vee 1\right])\;.
$$
\end{lemma}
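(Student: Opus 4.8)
The plan is to recognize $\sqrt{Z_i\trsp Q_i Z_i}$ as a convex Lipschitz function of the isotropic vector $X_i$, so that the assumed concentration inequality applies essentially verbatim. Since $Q_i$ and $\Sigma_i$ are symmetric positive (semi-)definite, one writes
$$
Z_i\trsp Q_i Z_i = X_i\trsp \Sigma_i^{1/2} Q_i \Sigma_i^{1/2} X_i = \norm{Q_i^{1/2}\Sigma_i^{1/2} X_i}_2^2\;,
$$
so $\sqrt{Z_i\trsp Q_i Z_i}=\norm{Q_i^{1/2}\Sigma_i^{1/2}X_i}_2$ is the Euclidean norm of a linear image of $X_i$: it is convex in $X_i$, with Lipschitz constant $L_i:=\opnorm{Q_i^{1/2}\Sigma_i^{1/2}}$. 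I would then record that $L_i^2=\opnorm{\Sigma_i^{1/2}Q_i\Sigma_i^{1/2}}=\lambda_{\max}(Q_i\Sigma_i)\le \opnorm{Q_i\Sigma_i}$, which is exactly what is needed to get the bound stated in the lemma (rather than the cruder $\opnorm{Q_i}\opnorm{\Sigma_i}$).

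Applying the hypothesis to the convex $1$-Lipschitz function $X\mapsto \norm{Q_i^{1/2}\Sigma_i^{1/2}X}_2/L_i$ yields, for a median $m_i$ of $\sqrt{Z_i\trsp Q_i Z_i}$,
$$
\bP\!\left(\left|\sqrt{Z_i\trsp Q_i Z_i}-m_i\right|>t\right)\le 2\exp\!\left(-\,c_i t^2/\opnorm{Q_i\Sigma_i}\right)\;.
$$
Integrating this tail gives the two elementary consequences I will use: the median--mean gap is $\bO(\sqrt{\opnorm{Q_i\Sigma_i/c_i}})$, and the variance of $\sqrt{Z_i\trsp Q_i Z_i}$ is $\bO(\opnorm{Q_i\Sigma_i/c_i})$. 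A union bound over $i=1,\dots,n$ — no independence of the $Z_i$'s is needed, only the marginal tail bounds — with $t$ of order $\sqrt{\sup_i \opnorm{Q_i\Sigma_i/c_i}}\,\sqrt{\log n}$ makes the failure probability $\lo(1)$, and recentering around the mean contributes only the median--mean gap, which is of strictly smaller order; this proves the first displayed bound.

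For the second bound I would put $a_i:=\sqrt{Z_i\trsp Q_i Z_i}$, $\mu_i:=\Exp{a_i}$, and $r_n:=\sqrt{\sup_i\opnorm{Q_i\Sigma_i/c_i}}\,\sqrt{\log n}$, and expand
$$
a_i^2-\trace{\Sigma_iQ_i}=(a_i-\mu_i)^2+2\mu_i(a_i-\mu_i)+\big(\mu_i^2-\trace{\Sigma_iQ_i}\big)\;.
$$
Here $\mu_i\le\sqrt{\Exp{a_i^2}}=\sqrt{\trace{\Sigma_iQ_i}}$ by Jensen (using $\Exp{X_iX_i\trsp}=\id_p$), the term $\trace{\Sigma_iQ_i}-\mu_i^2$ is exactly the variance of $a_i$, hence $\bO(\opnorm{Q_i\Sigma_i/c_i})$, and the first part gives $\sup_i|a_i-\mu_i|=\bO_P(r_n)$. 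Under the assumption $r_n\tendsto 0$ one has $\sup_i\opnorm{Q_i\Sigma_i/c_i}\le r_n^2\le r_n$, so combining the three terms and using $\sup_i\mu_i\le\sup_i\sqrt{\trace{\Sigma_iQ_i}}$ gives $\sup_i\big|a_i^2-\trace{\Sigma_iQ_i}\big|=\bO_P(r_n^2)+\bO_P(r_n\sup_i\sqrt{\trace{\Sigma_iQ_i}})+\bO(r_n^2)=\bO_P(r_n[\sup_i\sqrt{\trace{\Sigma_iQ_i}}\vee 1])$, which is the claim. The only genuinely non-routine point is the identity $\opnorm{Q_i^{1/2}\Sigma_i^{1/2}}^2=\lambda_{\max}(Q_i\Sigma_i)$ pinning the Lipschitz constant to $\sqrt{\opnorm{Q_i\Sigma_i}}$; once that is in hand the argument is a standard union bound plus tail integration, and the possible dependence among the $Z_i$'s plays no role.
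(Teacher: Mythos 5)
Your proposal is correct and follows essentially the same route as the paper: identify $\sqrt{Z_i\trsp Q_iZ_i}$ as a convex $\sqrt{\opnorm{Q_i\Sigma_i}}$-Lipschitz function of $X_i$, apply the concentration hypothesis plus a union bound for the first display, and then expand $a_i^2-\trace{\Sigma_iQ_i}$ using the variance bound $\trace{\Sigma_iQ_i}-\mu_i^2=\mathrm{Var}(a_i)=\bO(\opnorm{Q_i\Sigma_i/c_i})$ for the second. The only cosmetic differences are that you derive the median-to-mean recentering and the variance bound by integrating the tail, where the paper states the mean-centered inequality directly and cites Proposition 1.9 of Ledoux for the variance.
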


As explained in \cite{ledoux2001}, the condition we require on $X_i$ is satisfied by many distributions. We refer also to \cite{nekCorrEllipD} for many examples. Here are two examples. The Gaussian distribution in dimension $p$ satisfies the previous assumptions with $c_i=1/2$, independently of the dimension. When $X_i$'s have independent coordinates supported on intervals of width at most $B_i$, $c_i$ is proportional to $1/B_i$. 

\begin{proof}
The map $X_i\rightarrow \sqrt{X_i\trsp\Sigma_i^{1/2}Q_i\Sigma_i^{1/2} X_i}$ is convex and $\sqrt{\opnorm{Q_i\Sigma_i}}=\sqrt{\opnorm{\Sigma_i^{1/2}Q_i\Sigma_i^{1/2}}}$-Lipschitz as a function of $X_i$. Indeed, it is a norm, which gives convexity. The Lipschitz-property comes from the triangle inequality. Hence, under our assumptions, since $Z_i=\Sigma_i^{1/2}X_i$, we have 
$$
P\left(\left|\sqrt{Z_i\trsp Q_i Z_i}-\Exp{\sqrt{Z_i\trsp Q_i Z_i}}\right|>t\right)\leq 2\exp(-c_i t^2/(\opnorm{Q_i\Sigma_i}))\;.
$$	

By a simple union bound, we get 
$$
P\left(\sup_{1\leq i \leq n}\left|\sqrt{Z_i\trsp Q_i Z_i}-\Exp{\sqrt{Z_i\trsp Q_i Z_i}}\right|>t\right)\leq 2\sum_{i=1}^n \exp(-c_i t^2/[\opnorm{Q_i\Sigma_i}])\leq 
2 n \exp(- t^2/(\sup_i \opnorm{Q_i\Sigma_i/c_i}))
$$
Taking $t_K=K\sqrt{\log(n)\opnorm{Q_i\Sigma_i/c_i}}$, for $K$ a constant,  gives the first result. 
The second result follows from remarking that $|a^2-b^2|=|a-b||a+b|\leq |a-b|^2+2|b||a-b|$. When $\sup_i \sqrt{\opnorm{Q_i\Sigma_i/c_i}} \sqrt{\log n}\tendsto 0$, this gives immediately  
$$
\sup_{1\leq i \leq n}\left|Z_i\trsp Q_i Z_i-\left[\Exp{\sqrt{Z_i\trsp Q_i Z_i}}\right]^2\right|=\bO_P(\sup_i \sqrt{\opnorm{Q_i\Sigma_i/c_i}}\sqrt{\log n} \sup_i \left[\sqrt{\trace{\Sigma_i Q_i}}\vee 1\right])\;, 
$$
after we notice that $\trace{\Sigma_iQ_i}=\Exp{Z_i\trsp Q_i Z_i}\geq \left[\Exp{\sqrt{Z_i\trsp Q_i Z_i}}\right]^2$.
Finally, using the variance bound in Proposition 1.9 of \cite{ledoux2001}, we see that, 
$$
\Exp{Z_i\trsp Q_i Z_i}-\left[\Exp{\sqrt{Z_i\trsp Q_i Z_i}}\right]^2\leq 2\opnorm{\Sigma_i Q_i/c_i}\;.
$$
Under our assumption that $\sup_i \sqrt{\opnorm{Q_i\Sigma_i/c_i}} \sqrt{\log n}\tendsto 0$, we have $\sup_i \sqrt{\opnorm{Q_i\Sigma_i/c_i}}\tendsto 0$ and therefore 
$$
\sup_i \opnorm{\Sigma_i Q_i/c_i} = \lo(\sup_i\sqrt{\opnorm{\Sigma_i Q_i/c_i}})\;.
$$
This gives the second bound. 
\end{proof}

In the case of the Gaussian distribution, the previous bounds can be improved, using an observation found in \cite{LaurentMassart2000}.

\begin{lemma}\label{lemma:controlSupQuadFormsLaurentMassart}
Suppose $Z_1,\ldots,Z_n$ are random vectors in $\mathbb{R}^p$, with $Z_i\sim{\cal N}(0,\Sigma_i)$. $Z_i$'s are possibly dependent. Let $\{Q_i\}_{i=1}^n$ be $p\times p$ positive definite matrices.  
Then we have, if $S_i=\Sigma_i^{1/2}Q_i\Sigma_i^{1/2}$
$$
\sup_{1\leq i \leq n}\left|Z_i\trsp Q_i Z_i-\trace{S_i}\right|=\bO_P\left(\sup_{1\leq i \leq n}\sqrt{\log(n)}\sqrt{\trace{S_i^2}}+\opnorm{S_i}\log(n)\right)\;.
$$
\end{lemma}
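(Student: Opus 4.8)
The plan is to diagonalize each quadratic form into a weighted sum of independent centered $\chi^2_1$ variables, apply the sharp one-variable tail estimates of Laurent and Massart \cite{LaurentMassart2000}, and then take a union bound over the $n$ indices; no independence among the $Z_i$'s is needed for this last step, which is why the hypothesis allows the $Z_i$'s to be dependent.

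First I would write $Z_i=\Sigma_i^{1/2}X_i$ with $X_i\sim{\cal N}(0,\id_p)$, so that $Z_i\trsp Q_i Z_i=X_i\trsp S_i X_i$ with $S_i=\Sigma_i^{1/2}Q_i\Sigma_i^{1/2}$ symmetric and positive semidefinite (this is where positive definiteness of $Q_i$ is used, through $S_i=(\Sigma_i^{1/2}Q_i^{1/2})(\Sigma_i^{1/2}Q_i^{1/2})\trsp$). Writing $S_i=U_i\,\diag(\lambda_{i,1},\dots,\lambda_{i,p})\,U_i\trsp$ with $\lambda_{i,k}\ge 0$ and setting $Y_i=U_i\trsp X_i\sim{\cal N}(0,\id_p)$, one obtains the identity
\[
Z_i\trsp Q_i Z_i-\trace{S_i}=\sum_{k=1}^p\lambda_{i,k}\bigl(Y_{i,k}^2-1\bigr),
\]
a weighted sum of i.i.d.\ centered $\chi^2_1$ random variables with nonnegative weights, for which $\sum_k\lambda_{i,k}^2=\trace{S_i^2}$ and $\max_k\lambda_{i,k}=\opnorm{S_i}$.

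Next I would invoke the Laurent--Massart bounds: for nonnegative weights $a\in\RR_+^p$ and $W=\sum_k a_k(\chi^2_{1,k}-1)$ one has $P\bigl(W\ge 2\norm{a}_2\sqrt{x}+2\norm{a}_\infty x\bigr)\le e^{-x}$ and $P\bigl(W\le -2\norm{a}_2\sqrt{x}\bigr)\le e^{-x}$ for all $x>0$. Applying this with $a=(\lambda_{i,1},\dots,\lambda_{i,p})$ gives, for each fixed $i$,
\[
P\Bigl(\bigl|Z_i\trsp Q_i Z_i-\trace{S_i}\bigr|\ge 2\sqrt{\trace{S_i^2}}\,\sqrt{x}+2\opnorm{S_i}\,x\Bigr)\le 2e^{-x}.
\]

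Finally, a union bound over $i=1,\dots,n$ with $x$ a fixed multiple of $\log n$ strictly larger than $\log n$ (say $x=2\log n$) shows that, with probability at least $1-2/n$,
\[
\sup_{1\le i\le n}\bigl|Z_i\trsp Q_i Z_i-\trace{S_i}\bigr|\le 2\sqrt{2\log n}\,\sup_i\sqrt{\trace{S_i^2}}+4\log n\,\sup_i\opnorm{S_i},
\]
which is exactly the asserted $\bO_P$ statement. There is essentially no obstacle here: once the diagonalization reduces everything to a weighted $\chi^2$ sum, the proof is a direct application of a known concentration inequality followed by a union bound. The only point worth flagging is that the improvement over Lemma \ref{lemma:controlSupQuadForms} is built into the Laurent--Massart estimate, whose Gaussian ``variance'' term is $\norm{a}_2=\sqrt{\trace{S_i^2}}$ rather than the cruder $\sqrt{p}\,\norm{a}_\infty$; this is what lets us tolerate a handful of large eigenvalues of $\Sigma_i$ without paying a factor $\sqrt{p}$.
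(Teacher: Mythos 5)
Your proof is correct and follows essentially the same route as the paper: reduce the quadratic form to a weighted sum of centered $\chi^2_1$ variables (the paper phrases your explicit diagonalization as ``rotational invariance of the Gaussian distribution''), apply Lemma 1 of Laurent--Massart, and conclude with a union bound at $x$ proportional to $\log n$. The only difference is presentational — you spell out the diagonalization and the role of positive semidefiniteness of $S_i$, which the paper leaves implicit.
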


\begin{proof}
By rotational invariance of the Gaussian distribution, we have
$$
W_i\triangleq Z_i\trsp Q_i Z_i-\trace{S_i}\equalInLaw \sum_{k=1}^p \lambda_k(S_i) (X_k^2-1)\;,
$$
where $X_k$'s are i.i.d ${\cal N}(0,1)$. 
Using Lemma 1, p, 1325 in \cite{LaurentMassart2000}, we see that 
$$
P\left(\frac{|W_i|}{2}>\sqrt{\trace{S_i^2}}\sqrt{x}+\opnorm{S_i} x\right)\leq \exp(-x)\;.
$$
Taking $x=K\log(n)$ in the previous inequality and a simple union bound gives the announced result. 
\end{proof}

\subsection{Proof of Lemma \ref{lemma:approxBoundsLaplacian}} \label{app:sec:ApproxResLaplMatrix}

\begin{proof}
We have $L(W,G)=D^{-1}S=(D/n)^{-1}(S/n)$. If we call $d_{i,i}=\sum_{j\neq i} w_{i,j}$ and $\tilde{d}_{i,i}=\sum_{j\neq i} \tilde{w}_{i,j}$, we see that 
$$
|d_{i,i}/n-\tilde{d}_{i,i}/n|\leq \sup_{j\neq i} |w_{i,j}-\tilde{w}_{i,j}|\;.
$$
Hence, 
$$
\sup_{1\leq i \leq n} |d_{i,i}/n-\tilde{d}_{i,i}/n|\leq \sup_{1\leq i \leq n} \sup_{j\neq i} |w_{i,j}-\tilde{w}_{i,j}|\leq \sup_{i,j}  |w_{i,j}-\tilde{w}_{i,j}|\leq \eps\;.
$$
We conclude that 
$$
\opnorm{D/n-\widetilde{D}/n}\leq \eps\;.
$$
Under our assumptions, it is clear that $\opnorm{(D/n)^{-1}}\leq 1/\gamma$. The previous display also implies that $\opnorm{(\widetilde{D}/n)^{-1}}\leq 1/(\gamma-\eps)$. 

Furthermore, since 
$$
(D/n)^{-1}-(\widetilde{D}/n)^{-1}=(D/n)^{-1}[D/n-\widetilde{D}/n](\widetilde{D}/n)^{-1}\;,
$$
we see that 
$$
\opnorm{(D/n)^{-1}-(\widetilde{D}/n)^{-1}}\leq \frac{\eps}{\gamma (\gamma-\eps)}\;.
$$

Also, 
$$
\norm{S/n-\widetilde{S}/n}_F^2\leq \sup_{i,j}\norm{S_{i,j}-\widetilde{S}_{i,j}}_F^2\;.
$$
Naturally, since $S_{i,j}=w_{i,j}G_{i,j}$ and $\widetilde{S}_{i,j}=\widetilde{w}_{i,j}\widetilde{G}_{i,j}$,
$$
\norm{S_{i,j}-\widetilde{S}_{i,j}}_F^2\leq |w_{i,j}|^2 \norm{G_{i,j}-\widetilde{G}_{i,j}}_F^2+|w_{i,j}-\widetilde{w}_{i,j}|^2 \norm{\widetilde{G}_{i,j}}_F^2\leq C^2 (\eta^2+\eps^2)\;.
$$
Hence, 
$$
\opnorm{S/n-\widetilde{S}/n}\leq \norm{S/n-\widetilde{S}/n}_F\leq C (\eta+\eps).
$$
We also note that $\norm{\widetilde{S}/n}_F\leq C^2$. So we can conclude that 
$$
\opnorm{D^{-1}S-\widetilde{D}^{-1}\widetilde{S}}\leq \opnorm{D^{-1}(S-\widetilde{S})+(D^{-1}-\widetilde{D}^{-1})\widetilde{S}}\leq \frac{1}{\gamma} C(\eta+\eps)+\frac{\eps}{\gamma (\gamma-\eps)} C^2\;.
$$
	
\end{proof}

\subsection{$\card{\setOfTransforms}$: an example when $\setOfTransformskDims_\text{exact}\subset SO(k)$}\label{subsec:cardExactRotations}

Naturally, when working with discretized images/objects with $p$ pixels/voxels, we need to also discretize $SO(k)$. In light of results like Proposition \ref{prop:controlApproxdijNoisy}, one natural question we have to deal with concerns the cardinality of the discretized set of transformations, $\setOfTransformskDims$, and the corresponding set for companion matrices, $\setOfTransforms$. The following proposition answers this question.

The images/objects are discretized in polar coordinates. In other words, each point on our grid can be identified by its location on a ray emanating from the origin and reaching a point $p$ on the sphere of radius $r_0$ centered at the origin. The discretization of each ray does not have to be uniform. But this discretization is the same for all rays. 

In the case of $SO(2)$, this simply means that we discretize the circle of radius $r_0$, and our points lay on the corresponding radii. In this situation, it is natural to represent each point on our grid through $(r,\theta)$. To give a concrete example, we assume that $\theta \in \{2\pi\frac{k}{M}\}_{k=0}^{M-1}$ and $r\in \{\mathsf{r}_1,\ldots,\mathsf{r}_\alpha\}$ with $\mathsf{r}_\alpha=r_0$. The discretization of $SO(2)$ corresponds simply to rotations by an angle $\theta_k$, where $\theta_k=2\pi\frac{k}{M}$. These rotations clearly map our grid onto itself.

We assume that our images or objects, after having been uniformly discretized in polar coordinates, fit in a $k$-dimensional cube. Then we assume that the rotation group is properly discretized so that each rotation is exact in the sense that it commutes with discretization. In other words, the rotation does not change the pixel values  - pixels are simply swapped  and pixel values are not averaged or aggregated in other ways. Note that when the image or object is discretized in  Cartesian coordinates, the discretization and rotation will not commute and a distortion is inevitable. We postpone the study of such a phenomenon to future work.  

\begin{proposition} \label{prop:cardinalSetOfDiscretizedRotations}
When $\setOfTransformskDims_\text{exact}$ is the discretized version of $SO(k)$ we just discussed, we have 
$$
\card{\setOfTransformskDims_\text{exact}}=\card{{\cal T}_\text{exact}}=\gO(p^{k-1})\;.
$$
Furthermore, the elements of ${\cal T}_\text{exact}$ are permutation matrices. In particular, they are orthogonal matrices. 
\end{proposition}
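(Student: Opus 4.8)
The plan is to exploit the product structure of the polar‑coordinate grid to reduce the enumeration of exact rotations to the enumeration of rotations that permute a finite set of directions on the sphere $S^{k-1}$, and then to bound the number of such rotations by a rigidity (dimension‑counting) argument for $SO(k)$. First I would fix notation: write the sampling grid as $\mathfrak{X}=\{\mathsf{r}_a\, d:\ 1\le a\le\alpha,\ d\in\mathcal{D}\}$ (together with the origin if it is a sample point), where $\mathcal{D}\subset S^{k-1}$ is the finite set of $M:=\card{\mathcal{D}}$ sampled directions, and $\mathsf{r}_1<\cdots<\mathsf{r}_\alpha=r_0$ are the radial samples, which are common to all rays by hypothesis. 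Since $d\mapsto r_0 d$ is an injection of $\mathcal{D}$ into $\mathfrak{X}$, we have $M\le p$. Because the discretized images genuinely occupy $\RR^k$, the directions in $\mathcal{D}$ span $\RR^k$; I fix once and for all $k-1$ linearly independent directions $d_1,\dots,d_{k-1}\in\mathcal{D}$.

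Next I would record the elementary observation that every $\transformkDims\in\setOfTransformskDims_{\text{exact}}\subset SO(k)$ preserves Euclidean norms, hence maps each radial shell $\{\mathsf{r}_a d:d\in\mathcal{D}\}$ onto itself; choosing a shell with $\mathsf{r}_a>0$ then forces $\transformkDims\mathcal{D}=\mathcal{D}$, i.e. $\transformkDims$ restricts to a permutation of the finite set $\mathcal{D}$, and consequently induces a permutation of all of $\mathfrak{X}$.

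The heart of the proof is the rigidity claim: the assignment $\transformkDims\mapsto(\transformkDims d_1,\dots,\transformkDims d_{k-1})$ is injective on $SO(k)$, a fortiori on $\setOfTransformskDims_{\text{exact}}$. Indeed, if $\transformkDims,\transformkDims'\in SO(k)$ agree on $d_1,\dots,d_{k-1}$, then $(\transformkDims')^{-1}\transformkDims$ fixes the hyperplane $H=\mathrm{span}(d_1,\dots,d_{k-1})$ pointwise; any element of $O(k)$ fixing a hyperplane pointwise acts on the line $H^{\perp}$ as $\pm\,\mathrm{id}$, and the value $-1$ (a reflection, of determinant $-1$) is excluded by the determinant‑one constraint, so $(\transformkDims')^{-1}\transformkDims=\mathrm{id}$. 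Since $\transformkDims d_i\in\mathcal{D}$ for each $i$ by the previous step, we obtain
$$
\card{\setOfTransformskDims_{\text{exact}}}\ \le\ M^{\,k-1}\ \le\ p^{\,k-1}\ =\ \gO(p^{k-1})\;.
$$

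Finally I would dispose of the two remaining assertions. For $\card{\setOfTransformskDims_{\text{exact}}}=\card{\setOfTransforms_{\text{exact}}}$: two distinct rotations act as distinct permutations of the spanning set $\mathfrak{X}$ (two linear maps agreeing on a spanning set coincide), and each $\transformkDims$ has a unique companion matrix $\transform$, so $\transformkDims\mapsto\transform$ is a bijection onto $\setOfTransforms_{\text{exact}}$. For the last claim, since $\transformkDims$ merely relabels the pixels of a discretized image according to the permutation it induces on $\mathfrak{X}$, its companion matrix $\transform$ has exactly one entry equal to $1$ in each row and each column and zeros elsewhere, i.e. $\transform$ is a permutation matrix; permutation matrices are orthogonal. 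I expect the only genuinely delicate point to be the rigidity claim, chiefly the determinant bookkeeping that kills the sign ambiguity together with the (mild) requirement that $\mathcal{D}$ contain $k-1$ linearly independent directions; everything else is routine bookkeeping. Note that for $k=2$ the statement is essentially immediate, since $\setOfTransformskDims_{\text{exact}}$ then consists precisely of the $M$ rotations by angles $2\pi j/M$, so the real content lies in the range $k\ge 3$.
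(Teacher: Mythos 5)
Your proof is correct and follows essentially the same route as the paper's: bound $\card{\setOfTransformskDims_\text{exact}}$ by noting that a special-orthogonal map is determined by its action on $k-1$ spanning directions, each of which must land among the $M\le p$ sampled directions on the sphere, and observe that the companion matrices merely permute grid points. You simply make explicit two steps the paper leaves implicit (the determinant argument killing the sign on $H^\perp$, and the norm-preservation argument showing each exact rotation permutes the direction set $\mathcal{D}$), which is fine.
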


\textbf{Comment :} Proposition \ref{prop:cardinalSetOfDiscretizedRotations} shows that in checking Assumption \textbf{G1}, we can assume that $\card{\setOfTransforms}$ is polynomial in $p$. This implies that Assumption G1 will be satisfied when $\max(\sigma_p,\sqrt{p}s_p^2) =\lo([\log(np)]^{-1/2})$. Hence the conditions we will have to check on $\sigma_p$ and $\sqrt{p}s_p^2$ will be quite unrestrictive and we will see that this implies that CGL algorithms are robust to considerable amount of additive noise. 

\begin{proof} 
Our polar-coordinate discretization amounts to discretizing a sphere of radius $r_0$ in $\mathbb{R}^k$ with $M$ points and discretizing each ray linking a point on that sphere to the origin along $\alpha$ points. We naturally have the relationship $M\alpha=p$. 

Now elements of $\setOfTransformskDims_\text{exact}$ are orthogonal matrices with determinant 1, hence they can be characterized by their action on $k-1$ vectors in $\mathbb{R}^k$ which span a subspace of dimension $k-1$. 

Let us pick $k-1$ elements among our $M$ points on the sphere of radius $r_0$. We require that these $k-1$ elements span a subspace of dimension $k-1$ in $\mathbb{R}^k$. We call the corresponding vectors $v_1,\ldots,v_{k-1}$. 

Suppose now that $\transformkDims \in \setOfTransformskDims_\text{exact}$. Then, for each $i$, $\transformkDims v_i$ has to be one of the elements of our discretized sphere. Therefore, 
$$
\card{\setOfTransformskDims_\text{exact}}\leq M^{k-1}=\left(\frac{p}{\alpha}\right)^{k-1}\leq p^{k-1}\;.
$$

Now let $\transformkDims \in \setOfTransformskDims_\text{exact}$ and let $O$ be the companion matrix of $\transformkDims$. Note that if $I_i$ is our image and $I_i^\vee \in \mathbb{R}^p$ is its discretized version,  $OI_i^\vee$ swaps the position of the entries of the vector $I_i^\vee$, since $\transformkDims$ maps our grid onto itself. Hence $O$ is a permutation matrix and it is therefore orthogonal.   
\end{proof}

\paragraph{Another approach} We note that another approach can be employed to generate a sampling grid $\mathfrak{X}$ and an associated set of exact transformations for $k\geq 3$. Take $m$ points in $R^k$  denoted as $X=\{x_i\}_{i=1}^m$. Take a finite subgroup, $T$ of $SO(k)$. 
Now consider the sampling grid $\mathfrak{X}\triangleq \{Rx_i;\, R\in T, x_i\in X\}$. Since $T$ is a subgroup, we know that the sampling grid $\mathfrak{X}$ is of finite size; that is, $|\mathfrak{X}|\leq m|T|$. It is also clear that $T$ is an exact set of transforms for $\mathfrak{X}$, by simply using the fact that $T$ is a group. 

Note that the standard polar coordinate grid in $\mathbb{R}^2$ described above can be viewed as an instance of the method we just discussed, with $T$ consisting of powers of the rotation by the angle $\frac{2\pi}{M}$. 

Note however that the classification of finite subgroups of $SO(k)$, for $k\geq 3$, imposes strong constraints on the sampling grids obtained by such a construction.

\newtheorem{nekDef}{Definition}
\section{Background on CGL methods}\label{Appendix:Section:CGL}

In this section, we discuss the {\it noise-free} connection graph Laplacian (CGL) $\vdmC$ defined in (\ref{definition:LWG}), understand its asymptotical behavior under the assumption that the point clouds we collect are distributed on a manifold, and show that the CGL matrix built up under this assumption enjoy a sparsity property which allows the robustness result shown in this paper. In addition, we will discuss the fact that the CGL matrix can be viewed as a generalization of the graph Laplacian (GL) \cite{singer_wu:2013}. We will see that although GL and CGL share several similar properties but are fundamentally different. 

We would assume the background knowledge of differential geometry in the following discussion. For a reader who is not familiar with the subject, we refer him to \cite{doCarmo:1992,gilkey:1974,Bishop_Crittenden:2001,berard2,Berline_Getzler_Vergne:2004} for the topics we will encounter. 

We start from some notations. Denote $\manifold$ to be a $d$-dimensional compact, connected and smooth Riemannian manifold embedded in $\RR^p$ via $\embedding$, where $d\leq p$. Denote the tangent bundle as $T\manifold$. The tangent plane at $y\in\manifold$ is denoted as $T_y \manifold$. Introduce the metric $g$ on $\manifold$ induced from the canonical metric of the ambient space $\RR^p$. Denote $d(y,y')$ to be the geodesic distance between $y,y'\in\manifold$.
Denote by $\nabla$ the covariant derivative of the vector field, $\Delta_g$ the Laplace-Beltrami operator, $\nabla^2$ the connection Laplacian of the tangent bundle associated with the Levi-Civita connection, and by $\Ric$ the Ricci curvature of $(\manifold,g)$. 
We denote the spectrum of $\nabla^2$ (resp. $\Delta_g$) by $\{-\lambda_l\}_{l=0}^\infty$ (resp. $\{-\gamma_l\}_{l=0}^\infty$), where $0=\lambda_0\leq\lambda_1\leq \ldots$ (resp. $0=\gamma_0<\gamma_1\leq \ldots$), and the corresponding eigenspaces by $F_l:=\{\VectorFieldOnM\in L^2(T\manifold):~\nabla^2\VectorFieldOnM=-\lambda_l \VectorFieldOnM\}$ (resp. $E_l:=\{\phi\in L^2(\manifold):~\Delta_g\phi=-\gamma_l \phi\}$), $l=0,1,\ldots$. In general, while $\gamma_0=0$ always exists, $\lambda_0$ may not: a simple example is found considering $S^2$ with the standard metric.
It is well known \cite{gilkey:1974} that $\dim(F_l)<\infty$, the eigen-vector-fields are smooth and form a basis for $L^2(T\manifold)$ (resp. $\dim(E_l)<\infty$, the eigenfunctions are smooth and form a basis for $L^2(\manifold)$), that is,
$L^2(T\manifold)=\overline{\oplus_{l\in\NN\cup\{0\}} F_l}$ (resp. $L^2(\manifold)=\overline{\oplus_{l\in\NN\cup\{0\}} E_l}$), the completion of $\oplus_{l\in\NN\cup\{0\}} F_l$ with relative to the measure induced by $g$. To simplify the statement, we assume that $\lambda_l$ (resp. $\gamma_l$) for each $l$ are simple and $\VectorFieldOnM_l$ (resp. $\phi_l$) is a normalized basis of $F_l$ (resp. $E_l$). Denote $\mathcal{B}(F_k)$ (resp. $\mathcal{B}(E_k)$) the set of bases of $F_k$ (resp. $E_k$), which is identical to the orthogonal group $O(\dim(F_k))$ (resp. $O(\dim(E_k))$). Denote the set of the corresponding orthonormal bases of $L^2(T\manifold)$ by
$\mathcal{B}(T\manifold,g)=\Pi^\infty_{k=1}\mathcal{B}(F_k)$ (resp. $\mathcal{B}(\manifold,g)=\Pi^\infty_{k=1}\mathcal{B}(E_k)$).

Given the collected data $\dataset=\{x_i\}_{i=1}^n\subset \RR^p$, where $x_i$ are {\it signal random vector} i.i.d. sampled from a random vector $\mathtt{X}$. 
We assume a manifold structure inside the signal random vector; that is, we view $\mathtt{X}:\Omega \rightarrow \RR^p$ as a measurable function with respect to the probability space $(\Omega,\mathcal{F},P)$, and assume that its range is $\embedding(\manifold)$. Note that we cannot define the probability density function (p.d.f.) of $\mathtt{X}$ on the ambient space $\RR^p$ when $d<p$ since $\iota(\manifold)$ is degenerate, but we can still discuss how we sample points from $\iota(\manifold)$, which leads to {\it the p.d.f. of $\mathtt{X}$ on $\manifold$}. Indeed, we employ the following definition which is based on the induced measure \cite[Section 4]{cheng_wu:2012}.
Denote $\tilde{\mathcal{B}}$ to be the Borel sigma algebra on $\embedding(\manifold)$, and $\tilde{P}_{\mathtt{X}}$ the probability measure of $\mathtt{X}$, defined on $ \tilde{\mathcal{B}}$, induced from $P$. Assume that $\tilde{P}_{\mathtt{X}}$ is absolutely continuous with respect to the volume density on $\embedding(\manifold)$ associated with $g$, that is, $\ud \tilde{P}_{\mathtt{X}}(x)=f(\embedding^{-1}({x}))\embedding_*\ud V(x)$, where $f:\manifold\to \RR$ and $x\in\embedding(\manifold)$. 
We interpret $f$ as {\it the p.d.f. of $\mathtt{X}$ on $\manifold$}.
To alleviate the notation, in the following we abuse the notation and will not distinguish between $\embedding(\manifold)$ and $\manifold$. 

Pick up the kernel function $K(x)=e^{-x^2}$. Note that the kernel function can be more general, for example, $K\in C^2(\RR)$, non-zero, non-negative and monotonic decreasing, but we focus ourselves on this kernel to make the explanation clear.
Denote $ \mu^{(k)}_{l}:=\int_{\mathbb{R}^d}\|x\|^l K^{(k)}(\|x\|)\ud x$, where $k=0,1,2$, $l\in \NN\cup\{0\}$, and $K^{(k)}$ means the $k$-th order derivative of $K$. We assume $\mu^{(0)}_0=1$ and $\sqrt{h}$ is small enough so that $\sqrt{h}$ is smaller than the reach \cite{Niyogi_Smale_Weinberger:2009} and the injectivity radius \cite{doCarmo:1992} of the manifold $\manifold$, $\text{inj}(\manifold)$.

\subsection{Connection Graph and Affinity Graph}\label{Appendix:B_2}

An affinity graph, denoted as $(\graphG,w)$, is actually a special case of the connection graph in the sense that the connection function is not defined on the affinity graph. 
If we take a constant function $\relationship_0:\graphE\to 1$, the affinity graph becomes a connection graph $(\graphG,w,\relationship_0)$.  

We mention that in practice, if we decide to construct the connection graph from a given dataset, there are several different ways depending on the application and goal. For example, in addition to the examples discussed in the main context, in the geometric approach to the signal processing \cite{coifman_singer:2008,Talmon_Cohen_Gannot_Coifman:2013}, the affinity is defined to be the Mahalanobis distance reflecting the intrinsic property of the underlying state space; in the chair synchronization problem \cite{Huang_Su_Guibas:2013}, the affinity between two chair meshes is defined based on their Hausdorff distance. The quality of the chosen affinity might influence the analysis result directly.

\subsection{Connection Graph Laplacian and its Applications}
Now we discuss the CGL. Consider the symmetric matrix $\vdmC_s := \vdmD^{-1/2}\vdmS\vdmD^{-1/2}$, which is similar to $\vdmC$. Since $\vdmC_s$ is symmetric, it has a complete set of eigenvectors $v_{n,i}$, $i=1,\ldots,nd$ and its associated eigenvalues $\mu_{n,i}$, where the eigenvalues are bounded by $1$ \cite{singer_wu:2012}. We would order the eigenvalues in the decreasing order. Note that the eigenvectors of $\vdmC_s$ is related to those of $\vdmC$ via $\vdmD^{-1/2}$. 

First, note that $\vdmC$ is an operator acting on $\boldsymbol{v}\in \RR^{nd}$ by
\begin{align}\label{Cmatrix}
(\vdmC\boldsymbol{v})[i] = \frac{\sum_{j: (i,j)\in E}w(i,j)r(i,j)\boldsymbol{v}[j]}{\sum_{k: (i,k)\in E} w(i,k)},
\end{align}
where $\boldsymbol{v}$ can be viewed as a vector-valued function defined on $\graphV$ so that $\boldsymbol{v}[j]:=(\boldsymbol{v}((j-1)d+1),\ldots,\boldsymbol{v}(jd))\in\RR^d$. 
We could interpret this formula as a generalized random walk on the affinity graph. Indeed, if we view the vector-valued function $\boldsymbol{v}$ as the status of a particle defined on the vertices, when we move from one vertex to the other one, the status is modified according to the relationship between vertices {\it encoded} in $\relationship$. We mention that depending on the connection function, the structure $\vdmC$ might be very different, which leads to different analysis results and conclusion. We will give a precise example regarding this statement later. 
Now we discuss an important property of the CGL -- the synchronization, which has been studied in \cite{Chung_Zhao_Kempton:2013,Bandeira_Singer_Spielman:2013} and applied to the following problems, for example,
\begin{enumerate}
\item a new imaging technique aiming to obtain the atomic scale resolution images of a macro-scale object called ``ptychographic imaging problem'' \cite{Marchesini_Tu_Wu:2014};\vspace{-.3cm}
\item a frame design called ``polarization'' for the phase retrieval problem \cite{Alexeev_Bandeira_Fickus_Mixon:2013};\vspace{-.3cm}
\item a spectral relaxation approach to solve the least squares solution of the rotational synchronization problem \cite{Wang_Singer:2013};\vspace{-.3cm}
\item graph realization problem by synchronization over the euclidean group \cite{Cucuringu_Lipman_Singer:2012,Cucuringu_Singer_Cowburn:2012}.
\end{enumerate} 
Here we give an intuition about this synchronization notion. Suppose there exists a vector-valued status $\boldsymbol{v}\in\RR^{nd}$ of norm $1$ which is ``synchronized'' according to the encoded relationship $\relationship$ in the sense that $\boldsymbol{v}[j]=\relationship(j,i)\boldsymbol{v}[i]$ for all $(i,j)\in \graphE$, then $\vdmC\boldsymbol{v}[i]$ will be the same as $\boldsymbol{v}[i]$, and hence the functional associated with the eigenvalue problem
$$
\max_{\boldsymbol{v}\in\RR^{nd};\,\|\boldsymbol{v}\|=1}\boldsymbol{v}^T\vdmC\boldsymbol{v}
$$ 
is maximized with the eigenvalue $1$, and its eigenvector is the synchronized vector $\boldsymbol{v}$. Thus, the top eigenvector of $\vdmC$, when viewed as the vector-valued status on the vertex, is the ``synchronized'' status with respect to the connection function. We mention that the existence of the synchronized vector-valued function is equivalent to the notion of ``consistency'' studied in \cite{Chung_Zhao_Kempton:2013}.

When the connection function is constant; that is, the connection matrix $G_0:=\boldsymbol{1}\boldsymbol{1}^T$, where $\boldsymbol{1}$ is a $n\times 1$ vector with all entries $1$, the GL is defined as $\dmL:=\dmI-L(W,G_0)$. Notice a natural interpretation of $L(W,G_0)$ -- since the sum of each row of $L(W,G_0)$ is $1$, $L(W,G_0)$ is the {\it transition matrix} associated with a random walk on $\graphG$. To avoid confusion, the eigenvectors and eigenvalues of $L(W,G_0)_s$ are denoted as $u_{n,i}$ and $\nu_{n,i}$, where $i=1,\ldots,n$, and $0\leq\nu_{n,i}\leq 1$ are ordered in the decreasing order.
As a special case of CGL, the GL has several applications which deserves discussion, for example 
\begin{enumerate}
\item in the spectral clustering algorithm, we only need to find the first $k$ trivial eigenvectors \cite{VonLuxburg_Belkin_Bousquet:2008,Lee_OveisGharan_Trevisan:2012}; \vspace{-.3cm}
\item to evaluate the Cheeger ratio, we need the second eigenvalue \cite{Fan:1996};\vspace{-.3cm}
\item to visualize the high dimensional data, we need the first $3$ non-trivial eigenvectors;\vspace{-.3cm}
\item in the cryo-EM problem, if we want to reconstruct the rotational position of each projection image, we need the first $9$ non-trivial eigenvectors \cite{Giannakis_Schwander_Ourmazd:2012}; \vspace{-.3cm}
\item in the 2d random tomography problem \cite{singer_wu:2013a}, only the first $2$ non-trivial eigenvectors are needed; \vspace{-.3cm}
\item in the orientability detection problem \cite{singer_wu:2012}, we need the first eigenvector. 
\end{enumerate}
More algorithms depending on the eigenstructure of the GL can be found, to mention but a few, in \cite{Coifman_Maggioni:2006,Szlam_Maggioni_Coifman:2008,Jones_Maggioni_Schul:2008,Sun_Ovsjanikov_Guibas:2009,Ovsjanikov_Sun_Guibas:2008,Rustamov:2007,Reuter:2010,Memoli:2011}. We comment that if the p.d.f $f$ is not uniform, then a specific normalization stated in \cite{coifman_lafon:2006} allows us to study the dynamics of the underlying dynamical system \cite{nadler_lafon_coifman:2005,nadler_lafon_coifman:2006}.

\subsection{Asymptotical behavior of CGL}
To better understand the CGL, we focus on the frame bundle and its associated tangent bundle \cite{singer_wu:2012} here to simplify the exploration. For CGL associated with a more general principal bundle structure, we refer the reader to \cite{singer_wu:2013}.

\begin{assumption}\label{Assumption:A}
\begin{enumerate}
\item[(D1)] $\manifold$ is a smooth and compact $d$-dim manifold. When the boundary $\partial\manifold$ is not empty, it is assumed to be smooth, and we denote $\manifold_\delta:=\{x\in\manifold:\,d(x,\partial\manifold)\leq \sqrt{\delta}\}$; 
\item[(D2)] The p.d.f. $\pdf\in C^3(\manifold)$ is uniformly bounded from below and above, that is, $0<p_m\leq \pdf(x)\leq p_M<\infty$. However, to simplify the exploration, we assume here that $\pdf$ is uniform, that is, $\pdf$ is a constant function defined on $\manifold$. When $\pdf$ is non-uniform, its theoretical results can be found in \cite{coifman_lafon:2006,singer_wu:2013}.
\end{enumerate}
\end{assumption}
Under Assumption \ref{Assumption:A}, we collect the data $\mathcal{X}$ independently and identically sampled from $\manifold$ and build up the following graph. First define a graph $\graphG_\manifold:=(\graphV,\graphE)$ by taking $\graphV=\mathcal{X}$, and $\graphE=\{(x_i,x_j);\,x_i\in\mathcal{X}\}$. Then define the affinity function $w$ on $\graphE$ by 
$$
w:\,(i,j)\mapsto K_h\left(x_i,x_j\right):=K\left( \|x_i-x_j \|^2_{\mathbb{R}^p}/h \right),
$$
where $h>0$ is the chosen bandwidth. Note that we choose to use the Euclidean distance, instead of the geodesic distance, to build up $w$ since in practice we have only an access to the Euclidean distance (or other metric, depending on the application). Asymptotically this discrepancy will disappear.
\begin{assumption}\label{Assumption:B}
\begin{enumerate}
\item[(D3)] For each point $x_i\in\mathcal{X}$, we also have a sample on the frame bundle $b(i)\in O(\manifold)$ so that the $b(i)$ is the basis of the tangent space $T_{x_i}\manifold$. In particular, we are given a group-valued function $b:\mathcal{X}\to O(d)$. 
\end{enumerate}
\end{assumption}

With Assumption \ref{Assumption:B}, we define the {\it connection function} on $\graphE$ as 
$$
\relationship:\,(i,j)\mapsto b(i)^{T}P_{x_i,x_j}b(j)\in O(d)
$$ 
where $P_{x_i,x_j}$ presents the parallel transport of the vector field from $x_j$ to $x_i$. 
As a result, we have a connection graph $(\graphG_\manifold, w,\relationship)$.
With $(\graphG_\manifold, w,\relationship)$, we build up the the CGL by $\id-L(W,G)$, where $W$ and $G$ are the weight matrix and connection matrix associated with $w$ and $\relationship$. Under this framework, the GL is when we work with the trivial line bundle associated with $\manifold$.

The geometrical meaning of the connection function deserves some discussions. First, note that although all tangent planes $T_{x_i}\manifold$ are isomorphic to $\RR^d$ \cite{doCarmo:1992}, but they are different in the sense that we cannot ``compare'' $T_{x_i}\manifold$ and $T_{x_j}\manifold$ directly. Precisely, it makes sense the say $u-v$ when $u,v\in\RR^d$, but we can not evaluate $u_i-u_j$ when $u_i\in T_{x_i}\manifold$ and $u_j\in T_{x_j}\manifold$. To carry out the comparison between different tangent planes, we need a bit more work. Indeed, $b(i)\in O(d)$ is a basis of the tangent plane of $T_{x_i}\manifold$, which practical meaning is mapping $\RR^d$ isomorphically to $T_{x_i}\manifold$. In other words, given a vector field $Y$, $b(i)^TY(x_i)$ evaluates its coordinate at $x_i$. The parallel transport $P_{x_i,x_j}$ is a geometrical generalization of the notion ``translation'' in the Euclidean space -- it is an isometric map mapping $T_{x_j}\manifold$ to $T_{x_i}\manifold$. As a result, $r(i,j)\in O(d)$ is an isometric map from $\RR^d$ to $\RR^d$, and geometrically it maps the coordinate of a vector field at $x_j$, that is, $\boldsymbol{v}[j]$, to the vector field at $x_i$, that is, $b(j)\boldsymbol{v}[j]$, then parallelly transports $b(j)\boldsymbol{v}[j]$ to $x_i$, and then evaluate the coordinate of $P_{x_i,x_j}b(j)\boldsymbol{v}[j]$ with related to the basis $b(i)$. 
We emphasize that the connection function in the connection graph associated with the frame bundle encodes not only the geometry but also the topology of the manifold. In practice, this constraint may lead to a better understanding of the underlying data structure. For example, in the cryo-EM problem, this viewpoint leads to a better angular classification result.

We now state the pointwise convergence and the spectral convergence of the $\vdmC$. These theorems apply to the GL, while we replace the vector fields by the functions with the same regularity and the connection Laplacian operator by the Laplace-Beltrami operator and $b(i)=1$ (see \cite{singer_wu:2013} for details).
\begin{thm}[CGL Pointwise Convergence \cite{singer_wu:2012,singer_wu:2013}] \label{thm:pointwise_conv_VDM}
Suppose Assumption \ref{Assumption:A} and Assumption \ref{Assumption:B} hold and $\VectorFieldOnM\in C^4(T\manifold)$. For all $x_i\notin \manifold_{\sqrt{h}}$ with high probability (w.h.p.)
\begin{align}
 b(i)\big((\vdmI-\vdmC)\VectorFieldOnMvec\big)[i]=h\frac{\mu_2}{2d}\nabla^2\VectorFieldOnM(x_i)+O(h^{2})+O\left(\frac{1}{n^{1/2}h^{d/4-1/2}}\right)\nonumber
\end{align} 
where $\VectorFieldOnMvec\in\RR^{nd}$ and $\VectorFieldOnMvec[i]=b(i)^{-1}\VectorFieldOnM(x_i)$. For all $x_i\in\manifold_{\sqrt{h}}$, we have w.h.p.
\begin{align}
b(i)\big((\vdmI-\vdmC)\VectorFieldOnMvec\big)[i]=O(\sqrt{h})P_{x_i,x_0}\nabla_{\partial_d}\VectorFieldOnM(x_0)+O(h)+O\left(\frac{1}{n^{1/2}h^{d/4-1/2}}\right),\nonumber
\end{align}
where $x_0=\argmin_{y\in\partial\manifold}d(x_i,y)$ and $\nabla_{\partial_d}$ is the derivative in the normal direction. 
\end{thm}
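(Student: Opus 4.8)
This statement is attributed to \cite{singer_wu:2012,singer_wu:2013}, and I would reproduce the argument there, which is a bias--variance analysis of the random operator $\vdmI-\vdmC$ applied to the fixed vector field $\VectorFieldOnM$. For a fixed vertex $x_i$, write $T_h\VectorFieldOnM(x_i):=\frac1n\sum_{j\neq i}K_h(x_i,x_j)\,r(i,j)\,\VectorFieldOnMvec[j]$ and $d_h(x_i):=\frac1n\sum_{j\neq i}K_h(x_i,x_j)$, so that $(\vdmC\VectorFieldOnMvec)[i]=d_h(x_i)^{-1}T_h\VectorFieldOnM(x_i)$ and hence $b(i)\big((\vdmI-\vdmC)\VectorFieldOnMvec\big)[i]=\VectorFieldOnM(x_i)-d_h(x_i)^{-1}b(i)\,T_h\VectorFieldOnM(x_i)$. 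The first step is to replace $T_h(x_i)$ and $d_h(x_i)$ by their conditional expectations given $x_i$, i.e.\ by kernel integrals against the p.d.f.\ $f$ on $\manifold$; the discrepancy is the variance term, handled at the end.

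For the bias term one evaluates these kernel integrals in a geodesic normal chart centered at $x_i$; this is legitimate because $\sqrt{h}<\text{inj}(\manifold)$ and because the rapid decay of $K$ renders the contribution of points with $d(x_i,x_j)$ larger than a fixed multiple of $\sqrt{h}\,\log(1/h)$ negligible. Inside the chart one Taylor expands in the normal coordinate $u$: the kernel, using $\|x_i-x_j\|^2_{\RR^p}=d(x_i,x_j)^2+O(d(x_i,x_j)^4)$; the Riemannian volume density, with its curvature ($\Ric$) correction; and the transported coordinate vector $b(i)^T P_{x_i,x_j} b(j)\,\VectorFieldOnMvec[j]$, which equals $\VectorFieldOnM(x_i)$ plus a term linear in $u$ (a first covariant derivative) plus a quadratic term assembled from the second covariant derivatives of $\VectorFieldOnM$ and the curvature. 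Multiplying the three expansions and integrating over $u\in\RR^d$, the odd-order terms vanish by symmetry of $K$; the $u$-independent term reproduces exactly the leading term of $d_h(x_i)\,\VectorFieldOnM(x_i)$, so after dividing by the expansion of $d_h(x_i)$ and subtracting from $\VectorFieldOnM(x_i)$ it cancels, while the second-order terms — the kernel moment $\mu_2$ now appearing — collect into the connection-Laplacian term $h\frac{\mu_2}{2d}\nabla^2\VectorFieldOnM(x_i)$, with a remainder $O(h^2)$. This is the interior estimate.

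For $x_i\in\manifold_{\sqrt{h}}$ the geodesic ball of radius $\sim\sqrt{h}$ around $x_i$ is clipped by $\partial\manifold$, so the integration region is no longer symmetric and the linear-in-$u$ term no longer integrates to zero. Passing to boundary normal coordinates at the nearest boundary point $x_0$ and computing the resulting half-space integral shows that the surviving contribution is of order $\sqrt{h}$ and is proportional to the normal derivative $\nabla_{\partial_d}\VectorFieldOnM(x_0)$ parallel-transported to $x_i$, which is the stated boundary form. For the variance term, the point to exploit is that the $u$-independent piece of $b(i)\,T_h\VectorFieldOnM(x_i)$ is exactly $\VectorFieldOnM(x_i)\,d_h(x_i)$, and so cancels against $\VectorFieldOnM(x_i)$ in $(\vdmI-\vdmC)\VectorFieldOnMvec[i]$; the fluctuations therefore enter only through the terms of $T_h$ that are at least linear in $u$, each of which is of size $O(\sqrt{h})$ on the effective support of $K$. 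A Bernstein (or Hoeffding) bound for these averages of bounded summands then gives, conditionally on $x_i$, a deviation of order $\sqrt{h}\cdot\sqrt{h^{d/2}/n}$; dividing by $d_h(x_i)\sim h^{d/2}$ gives the error $O(1/(n^{1/2}h^{d/4-1/2}))$, and a union bound over the $n$ vertices makes this, together with the interior and boundary expansions, hold simultaneously for all $i$ with high probability.

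The main obstacle is the geometric bookkeeping in the bias step: one has to expand $b(i)^T P_{x_i,x_j} b(j)$ to second order and verify that, once combined with the $\Ric$ term from the volume density and the $O(d^4)$ chordal-versus-geodesic correction in the kernel, the curvature contributions recombine precisely into the intrinsic connection Laplacian $\nabla^2\VectorFieldOnM$ with the scalar $\mu_2/(2d)$ — and, in the boundary case, to organize the truncated normal-coordinate integral cleanly enough to isolate the $O(\sqrt{h})$ normal-derivative term while controlling the remainder.
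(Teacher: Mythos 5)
The paper does not prove this theorem itself --- it is stated as background and attributed entirely to \cite{singer_wu:2012,singer_wu:2013}. Your sketch is a faithful reconstruction of the argument in those references: the same bias--variance split, the same normal-coordinate Taylor expansion of the kernel, volume density, and parallel transport (with odd moments vanishing and the second moments producing $h\frac{\mu_2}{2d}\nabla^2\VectorFieldOnM$), the same half-space computation yielding the $O(\sqrt{h})$ normal-derivative term near $\partial\manifold$, and the same variance accounting giving $O(1/(n^{1/2}h^{d/4-1/2}))$ after dividing by $d_h(x_i)\sim h^{d/2}$ and taking a union bound over vertices.
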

To state the spectral convergence result, define an operator $T_{\text{C},h}:C(T\manifold)\to C(T\manifold)$:
\[
T_{\text{C},h}\VectorFieldOnM(y):=\frac{\sum_{j: (i,j)\in E} K_h(y,x_j)P_{y,x_j}\VectorFieldOnM(x_j)}{\sum_{j: (i,j)\in E} K_h(y,x_j)},
\]
where $\VectorFieldOnM\in C(T\manifold)$. To simplify the discussion, we assume that the eigenvalues of the heat kernel of the connection Laplacian $e^{t\nabla^2}$ are simple. When there exists an eigenvalue with multiplicity greater than $2$, the theorem can be proved using the projection operators onto the eigenspaces. We mention that in the special case GL, the point convergence theorem was first established in \cite{belkin_niyogi:2005} under the uniform sampling and boundary-free assumption, and then extended to a more general setup in \cite{hein_audibert_luxburg:2005,coifman_lafon:2006,singer:2006,singer_wu:2013}, and the spectral convergence was established in \cite{belkin_niyogi:2007,VonLuxburg_Belkin_Bousquet:2008}.
\begin{thm}[CGL Spectral Convergence \cite{singer_wu:2013}]\label{thm:spectralconvergence_heatkernel}
Suppose Assumption \ref{Assumption:A} and Assumption \ref{Assumption:B} hold and fix $t>0$. Denote $\mu_{\text{C},t,h,i}$ to be the $i$-th eigenvalue of $T^{t/h}_{\text{C},h}$
with the associated eigenvector $\VectorFieldOnM_{\text{C},t,h,i}$. Also denote $\mu_{t,i}>0$ to be the $i$-th eigenvalue of $e^{t\nabla^2}$ with the associated eigen-vector field $\VectorFieldOnM_{t,i}$. We assume that both $\mu_{\text{C},t,h,i}$ and $\mu_{t,i}$ decrease as $i$ increases, respecting the multiplicity. Fix $i\in\NN$. Then there exists a sequence $h_n\to 0$ such that 
$$
\lim_{n\to \infty}\mu_{\text{C},t,h_n,i}=\mu_{t,i},\,\,\mbox{ and }\,\,\lim_{n\to \infty}\|\VectorFieldOnM_{\text{C},t,h_n,i}-\VectorFieldOnM_{t,i}\|_{L^2(T\manifold)}=0
$$ 
in probability.
\end{thm}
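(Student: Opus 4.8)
Following \cite{singer_wu:2013}, and closely paralleling the treatment of the ordinary graph Laplacian in \cite{VonLuxburg_Belkin_Bousquet:2008}, the plan is to decompose the analysis into a \emph{bias} part (pass from the empirical averaging operator $T_{\text{C},h}$ to a population integral operator, then let $h\to 0$) and a \emph{variance} part (control the fluctuation of $T_{\text{C},h}$ around the population operator at a fixed bandwidth), and then to combine the two limits by letting $h=h_n$ shrink slowly enough with $n$. Since $T_{\text{C},h}$ acts on $\RR^{nd}$ while $e^{t\nabla^2}$ acts on $L^2(T\manifold)$, all operators are compared on $L^2(T\manifold)$ through the restriction map $\VectorFieldOnM\mapsto(b(i)^{-1}\VectorFieldOnM(x_i))_{i=1}^n$, and spectral convergence is extracted using the theory of collectively compact operators, which only requires strong convergence together with a uniform compactness property.

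\textbf{Bias.} Introduce the population averaging operator
$$
\bar{T}_{\text{C},h}\VectorFieldOnM(y)=\frac{\int_\manifold K_h(y,x)\,P_{y,x}\VectorFieldOnM(x)\,\pdf(x)\,\ud V(x)}{\int_\manifold K_h(y,x)\,\pdf(x)\,\ud V(x)},\qquad \VectorFieldOnM\in C(T\manifold).
$$
The pointwise expansion underlying Theorem~\ref{thm:pointwise_conv_VDM}, applied to $\bar{T}_{\text{C},h}$ (so with the $O(n^{-1/2}h^{-d/4+1/2})$ sampling term absent) and strengthened to hold uniformly in $y$ on $\manifold\setminus\manifold_{\sqrt h}$ using compactness of $\manifold$ and $C^4$-smoothness of $\VectorFieldOnM$ and $\pdf$, gives
$$
\frac{1}{h}\big(\id-\bar{T}_{\text{C},h}\big)\VectorFieldOnM=-\frac{\mu_2}{2d}\,\nabla^2\VectorFieldOnM+O(h)\quad\text{in }C^0\text{ on the interior,}
$$
while on the $O(\sqrt h)$-thin collar $\manifold_{\sqrt h}$ the operator differs from the identity only by an $O(\sqrt h)$ normal-derivative term. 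Since $\vol(\manifold_{\sqrt h})=O(\sqrt h)\to 0$, the collar is negligible in the $L^2(T\manifold)$ norm and, in the limit, enforces Neumann boundary conditions. Writing $\bar{T}_{\text{C},h}^{\,t/h}=\big(\id-h\cdot\frac{1}{h}(\id-\bar{T}_{\text{C},h})\big)^{t/h}$ and applying a Chernoff--Trotter--Kato semigroup-approximation argument to the contraction semigroup generated by $\frac{\mu_2}{2d}\nabla^2$, one obtains $\bar{T}_{\text{C},h}^{\,t/h}\to e^{t\nabla^2}$ (up to a fixed rescaling of time absorbed in the normalizations of $h$ and $K$) strongly on $L^2(T\manifold)$; the uniform boundedness of the $\bar{T}_{\text{C},h}^{\,t/h}$ together with the trace-class nature of $e^{t\nabla^2}$ for $t>0$ makes this convergence good enough for the perturbation theory below.

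\textbf{Variance, and conclusion.} At fixed $h$, the numerator and denominator defining $T_{\text{C},h}$ are empirical means of uniformly bounded quantities (the parallel transports are isometries and $K$ is bounded), so a concentration-of-measure / uniform law-of-large-numbers argument (see \cite{ledoux2001}) gives $\|T_{\text{C},h}-\bar{T}_{\text{C},h}\|=O_P(n^{-1/2}h^{-d/4+1/2})$ in the relevant operator norm---the same rate that appears as the stochastic term in Theorem~\ref{thm:pointwise_conv_VDM}. Using $\|A^k-B^k\|\le k\max(\|A\|,\|B\|)^{k-1}\|A-B\|$ and the bound $\le 1$ on the relevant spectral radii, passing to the power $k=t/h$ costs a factor $t/h$, so $\|T_{\text{C},h}^{\,t/h}-\bar{T}_{\text{C},h}^{\,t/h}\|=O_P(h^{-1}n^{-1/2}h^{-d/4+1/2})$. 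Choosing $h=h_n\to 0$ slowly enough that this quantity and the $O(h)$ bias both tend to $0$, we obtain $\|T_{\text{C},h_n}^{\,t/h_n}-e^{t\nabla^2}\|\to 0$ in probability (more precisely, collectively compact convergence). Perturbation theory for compact self-adjoint operators then yields $\mu_{\text{C},t,h_n,i}\to\mu_{t,i}$ for each fixed $i$, counting multiplicities, and the Davis--Kahan theorem together with the assumed simplicity of $\mu_{t,i}$ upgrades this to $\|\VectorFieldOnM_{\text{C},t,h_n,i}-\VectorFieldOnM_{t,i}\|_{L^2(T\manifold)}\to 0$ in probability, which is the claim.

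\textbf{Main obstacle.} The delicate point is the variance step composed with exponentiation: the empirical-to-population operator error must be not merely small but genuinely $o(h)$, since it is multiplied by the diverging power $t/h$; this is precisely why the bandwidth cannot be fixed and only the existence of a suitable sequence $h_n\to 0$ can be asserted. Intertwined with this are the need to compare honestly a finite-dimensional empirical operator with an infinite-dimensional integral operator (handled by the collectively compact operator framework) and the need to control the bias uniformly up to, and render negligible, the boundary collar $\manifold_{\sqrt h}$---which is what fixes the boundary conditions of the limiting generator $\nabla^2$.
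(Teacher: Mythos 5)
First, note that the paper itself does not prove Theorem \ref{thm:spectralconvergence_heatkernel}: it is quoted from \cite{singer_wu:2013} as background material in the appendix, so there is no in-paper argument to compare yours against. Judged on its own terms, your outline follows the strategy that is standard for results of this type (and that the cited reference and \cite{VonLuxburg_Belkin_Bousquet:2008} actually employ): a bias step passing from the population averaging operator to the heat semigroup $e^{t\nabla^2}$ via a Chernoff-type product formula, a variance step controlling the empirical fluctuation, and spectral perturbation theory at the end. The skeleton is right, and you correctly read the statement as an existence claim about a slowly decaying sequence $h_n$.

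The genuine gap is in the variance step. You assert $\|T_{\text{C},h}-\bar{T}_{\text{C},h}\|=O_P(n^{-1/2}h^{-d/4+1/2})$ ``in the relevant operator norm,'' but the rate you are importing from Theorem \ref{thm:pointwise_conv_VDM} is a pointwise (fixed vector field, fixed base point) fluctuation bound; upgrading it to a bound uniform over the unit ball of $L^2(T\manifold)$ is precisely the hard part, and a naive union or covering argument over a class of vector fields does not close it. This is why \cite{VonLuxburg_Belkin_Bousquet:2008} abandons norm convergence altogether in favor of collectively compact convergence, which needs only strong convergence plus a relative-compactness property --- but collectively compact convergence is not preserved under taking the diverging power $t/h$ via the telescoping inequality $\|A^{k}-B^{k}\|\le k\max(\|A\|,\|B\|)^{k-1}\|A-B\|$, which genuinely requires a norm bound on $A-B$. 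Your write-up slides between the two modes of convergence (``$\|T_{\text{C},h_n}^{t/h_n}-e^{t\nabla^2}\|\to 0$ in probability (more precisely, collectively compact convergence)''), and it is exactly at this junction --- which you yourself flag as the main obstacle --- that the argument is missing rather than merely compressed. Two smaller points: $T_{\text{C},h}$ is not self-adjoint on $L^2(T\manifold)$ (it is only conjugate to a symmetric operator via the degree normalization), so the perturbation theory must be applied after the usual symmetrization; and the eigenvector conclusion needs the target eigenvalue to be isolated, which here rests on the assumed simplicity together with the discreteness of the spectrum of $e^{t\nabla^2}$, and should be said explicitly.
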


With these Theorems, we are able to discuss why different connection functions lead to different analysis results. Consider $S^2$ embedded in $\RR^3$ with the standard metric. If we define the connection function according to the Levi-Civita connection, then the top eigenvalue of $e^{t\nabla^2}$ is strictly less than $1$ due to the hairy-ball theorem \cite{gilkey:1974}. In other words, asymptotically we are not able to find a synchronized vector-valued status on it. On the other hand, if we take the trivial connection function, that is, $\relationship(i,j)=I_2$, then asymptotically we obtain $\Delta_g$ acting on two independent functions. Since the dimension of the null space of $\Delta_g$ is the number of the connected components of the manifold, the top eigenvalue of the CGL with the trivial connection function is $1$;  that is, a synchronized vector-valued status exists. See Figure \ref{S.B:fig:2} for the result.

The main reason leading to this difference is rooted in the connection theory, and we refer the interested reader to \cite{Bishop_Crittenden:2001}. 

\begin{figure}
\begin{center}
\includegraphics[width=0.6\columnwidth]{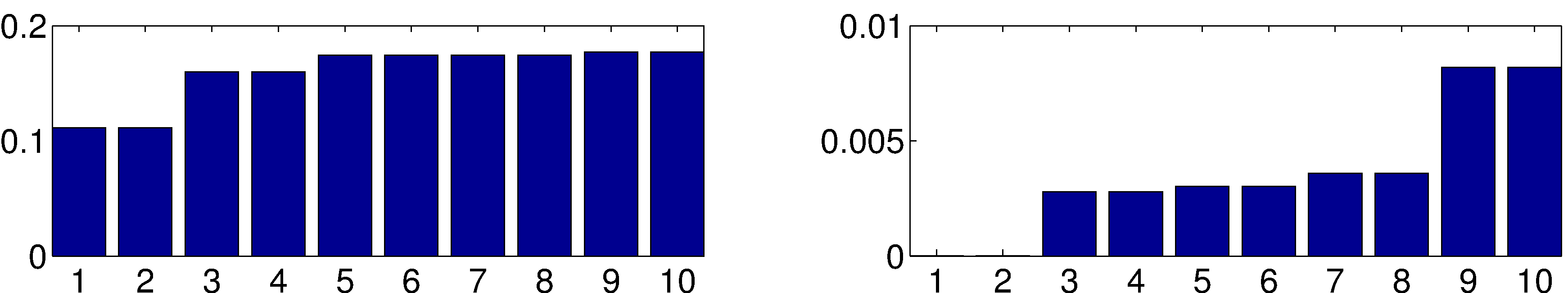}
\end{center}
\caption{\small The first $10$ eigenvalues of the $\vdmI-\vdmC$ with related to a non-trivial connection function determined from the Levi-Civita connection of the frame bundle of $S^2$ (left) and a trivial connection function (right). $1000$ points are sampled uniformly from $S^2$. Note that the eigenvalues on the right figure are the same as those of GL while the multiplicities of all the eigenvalues are $2$.}\label{S.B:fig:2}
\end{figure}

\subsection{The ``Sparsity'' of the CGL}

We define the following ``sparsity'' condition.
\begin{nekDef}
Fix $\gamma>0$. For a $n\times n$ matrix $Q$, we sort its eigenvalues $\nu_{Q,\ell}$, $\ell=1,\ldots,n$, so that $|\nu_{Q,1}|\geq |\nu_{Q,2}|\geq \ldots\geq |\nu_{Q,n}|$. Then $Q$ satisfies the {\it $\gamma$-sparsity property} if
$$
|\nu_{Q,\ell}|\leq e^{-C_Q\ell^{\gamma}}
$$ 
for all $\ell$, where $C_Q>0$ depends on $Q$.
\end{nekDef}
We now claim that the CGL under the manifold assumption satisfies the $2/d$-sparsity property. Note that this theorem also holds for the GL.
\begin{thm}
Asymptotically when $n\to\infty$, for $\ell\in\NN$, $\mu_{n,\ell}\leq e^{-C_{\text{L}}\ell^{2/d}}$,  
where the constants $C_{\text{L}}>0$ depends $d$, the lower bound of the Ricci curvature $k$ and the diameter $D$.
\end{thm}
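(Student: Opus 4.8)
The plan is to transfer the decay of the spectrum of the limiting continuum operator to the discrete CGL matrix, and to obtain that continuum decay by a Weyl-type eigenvalue estimate reduced from the connection Laplacian to the scalar Laplace--Beltrami operator.

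\emph{Step 1: passing to the continuum.} It is convenient to work with the self-adjoint matrix $\vdmC_s=\vdmD^{-1/2}\vdmS\vdmD^{-1/2}$, which is similar to $\vdmC=L(W,G)$ and so has the same eigenvalues $\mu_{n,\ell}$. By Theorems~\ref{thm:pointwise_conv_VDM} and \ref{thm:spectralconvergence_heatkernel}, $\vdmC$ is the empirical discretization of the integral operator $T_{\text{C},h}$, whose spectrum (for a fixed, small bandwidth $h$) is spectrally comparable to that of the heat semigroup $e^{t_h\nabla^2}$ of the connection Laplacian, with $t_h\asymp h$: in particular, for each fixed $\ell$, $\mu_{n,\ell}\to\mu_{\infty,\ell}$ as $n\to\infty$, with $\mu_{\infty,\ell}$ controlled by the $\ell$-th eigenvalue $e^{-t_h\lambda_\ell}$ of $e^{t_h\nabla^2}$, where $\{-\lambda_\ell\}_{\ell\geq0}$ is the spectrum of $\nabla^2$. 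It therefore suffices to establish a lower bound $\lambda_\ell\geq c\,\ell^{2/d}$ valid for \emph{every} $\ell$, with $c>0$ depending only on $d$, $k$ and $D$; the claimed bound $\mu_{n,\ell}\leq e^{-C_{\text{L}}\ell^{2/d}}$ then follows for $n$ large, with $C_{\text{L}}$ a fixed fraction of $t_h c$ (the loss absorbing both the $o(1)$ from the limit and the bandwidth entering $t_h$).

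\emph{Step 2: reduction to the scalar Laplacian and a Weyl bound.} Kato's inequality $|\nabla|X||\leq|\nabla X|$ for smooth vector fields $X$ on $\manifold$, together with the Hess--Schrader--Uhlenbrock domination principle (a consequence of the Bochner--Weitzenb\"ock formula), shows that the heat kernel of $\nabla^*\nabla=-\nabla^2$ is pointwise dominated in operator norm by the scalar heat kernel of $\Delta_g$. Taking traces,
\[
\sum_{\ell\geq0}e^{-t\lambda_\ell}=\tr\,e^{t\nabla^2}\;\leq\;d\,\tr\,e^{t\Delta_g}=d\sum_{m\geq0}e^{-t\gamma_m},
\]
where $\{-\gamma_m\}_{m\geq0}$ is the spectrum of $\Delta_g$. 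The Li--Yau Gaussian upper bound for the scalar heat kernel and Bishop--Gromov volume comparison (equivalently, the Cheng / B\'erard--Besson--Gallot eigenvalue comparison) give, under $\Ric\geq-(d-1)k$ and $\mathrm{diam}(\manifold)\leq D$, the bound $\tr\,e^{t\Delta_g}\leq C_1(d,k,D)\,t^{-d/2}$ for $0<t\leq1$. Hence $\sum_{\ell\geq0}e^{-t\lambda_\ell}\leq d\,C_1\,t^{-d/2}$, so $N(\Lambda):=\#\{\ell:\lambda_\ell\leq\Lambda\}$ satisfies $N(\Lambda)e^{-t\Lambda}\leq d\,C_1\,t^{-d/2}$; taking $t=\min(1,d/(2\Lambda))$ gives $N(\Lambda)\leq C_2(d,k,D)\,\Lambda^{d/2}$, i.e.\ $\lambda_\ell\geq C_3(d,k,D)\,\ell^{2/d}$ for every $\ell\geq1$ (and trivially for $\ell=0$). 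Combined with Step~1 this proves the theorem.

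\emph{Main obstacle.} The delicate point is the uniformity in $\ell$ required by Step~1: Theorem~\ref{thm:spectralconvergence_heatkernel} is stated for each \emph{fixed} index, whereas the $2/d$-sparsity property demands the exponential bound simultaneously over all $\ell\leq nd$. I expect this to be handled by a two-scale argument: for $\ell$ beyond a fixed threshold the discrete eigenvalues are already below an absolute constant strictly less than $1$ (a direct Rayleigh-quotient estimate on the kernel matrix, using the decay of $K$ and a bound on $\opnorm{\vdmS/n}$), so only finitely many leading eigenvalues must be followed through the $n\to\infty$ limit, where the continuum decay rate of Step~2 applies. One must also check that the comparison between $T_{\text{C},h}$ and $e^{t_h\nabla^2}$ used in Step~1 is quantitative enough to preserve the rate $\ell^{2/d}$ and not merely the location of the leading eigenvalues.
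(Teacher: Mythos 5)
Your proposal is correct, and its skeleton is the same as the paper's: reduce the claim to the lower bound $\lambda_\ell\geq c(d,k,D)\,\ell^{2/d}$ on the spectrum of the connection Laplacian, and then transfer it to the discrete eigenvalues via the spectral convergence $\mu_{n,\ell}\to e^{-\lambda_\ell}$ of Theorem \ref{thm:spectralconvergence_heatkernel}. Where you differ is in how that eigenvalue bound is obtained: the paper simply invokes Weyl's asymptotics for the connection Laplacian and cites an external reference for the quantitative version with constants depending only on $d$, the Ricci lower bound $k$ and the diameter $D$, whereas you derive it from scratch by dominating the heat semigroup of $-\nabla^2=\nabla^*\nabla$ by the scalar semigroup of $\Delta_g$ (Kato / Hess--Schrader--Uhlenbrock), bounding $\tr e^{t\Delta_g}\leq C_1(d,k,D)\,t^{-d/2}$ via Li--Yau and Bishop--Gromov, and running the standard Chernoff argument on the heat trace. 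Your route is more self-contained and makes transparent exactly why the constant depends only on $(d,k,D)$; the paper's is shorter but delegates the substance to a citation. Your concern about uniformity in $\ell$ is legitimate, but note that the paper itself does not address it either: it applies the fixed-index convergence $\mu_{n,\ell}\to e^{-\lambda_\ell}$ for each $\ell\in\NN$ separately, which is all that the statement ``asymptotically when $n\to\infty$, for $\ell\in\NN$'' actually asserts, so your proposed two-scale argument, while a sensible strengthening, is not needed to match what is claimed.
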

\begin{proof}
 Note that the Weyl's theorem \cite{Berline_Getzler_Vergne:2004} holds for the connection Laplacian, that is,
\[
\tilde{N}(\mu)\sim \frac{1}{(4\pi)^{d/2}\Gamma(d/2+1)}\mu^{d/2},
\] 
where $\tilde{N}(\mu)$ is the number of eigenvalues of $\nabla^2$ less than $\mu>0$, and we have the consequence that \cite{wu:2012}
\begin{align} 
\lambda_j\geq c'(d,k,D)j^{2/d},
\end{align}
where $j\in\NN$ and $c'(d,k,D)$ is the universal constant depending only on $d$, the lower bound of the Ricci curvature $k$ and the diameter $D$. Note that since $\mu_{n,\ell}\to e^{-\lambda_\ell}$ in probability, we have 
$$
\mu_{n,\ell}\leq e^{-c'(d,k,D)\ell^{2/d}}.
$$
Hence, combined with Theorem \ref{thm:spectralconvergence_heatkernel}, we conclude the claim with $C_{\text{L}}:=c'(d,k,D)$. 
\end{proof}

\subsection{Vector Diffusion Maps and Diffusion Maps}\label{subsection:Appendix:VDMDM}
In this subsection, we discuss a potential application of the CGL and GL -- estimate the local geodesic distance.
Fix $t>0$, we define the {\it vector diffusion maps} (VDM) $V_{t,n}:\mathcal{X}\to \RR^{(nd)^2}$ by
\begin{equation*}
V_{t,n}: x_i \mapsto \left( (\mu_{n,l} \mu_{n,r})^{t} \langle v_{n,l}[i], v_{n,r}[i] \rangle\right)_{l,r=1}^{nd},
\end{equation*}
where $v_{n,l}[i]$ is a $d$-dim vector containing the $((i-1)d+1)$-th entry to the $(id)$-th entry of the eigenvector $v_{n,l}$. 
With this map, the Hilbert-Schmidt norm of the $(i,j)$-th block of $\vdmC_s$ satisfies
\begin{equation*}
\|\vdmC_s^{2t}(i,j)\|^2_{HS} = \langle V_{t,n}(x_i), V_{t,n}(x_j) \rangle,
\end{equation*}
that is, $\|\vdmC_s^{2t}(i,j)\|^2_{HS}$ becomes an inner product for the finite dimensional Hilbert space.
The reason we need to consider $\vdmC_s^{2t}$ but not $\vdmC_s^{t}$ is that all eigenvalues $\mu_{n,l}$ of $\vdmC_s$ reside in the interval $[-1,1]$, and we can not guarantee the positivity of $\mu_{n,l}$ when $n$ is finite. We can then define the {\it vector diffusion distance} (VDD) to quantify the affinity between nodes $i$ and $j$: 
$$
d_{\text{C},t,n}:=\| V_{t,n}(x_i)- V_{t,n}(x_j)\|^2.
$$

The theoretical properties of VDM and VDD will be clear when $n\to \infty$.  
Fix $a\in \mathcal{B}(T\manifold,g)$ and $t>0$, define the following map embedding $x\in\manifold$ to $\ell^2$:
\begin{equation}
V^a_t:x \mapsto \left( \frac{1}{\sqrt{d}(4\pi)^{d/2}t^{(d+1)/2}} e^{-(\lambda_k + \lambda_l)t/2} \langle \VectorFieldOnM_k(x), \VectorFieldOnM_l(x) \rangle \right)_{k,l=1}^\infty.
\end{equation}
With $V^a_t$, we define a new affinity between pairs of points by
\begin{align}
d_{\text{C},t}(x,y) := \|V^a_t(x)-V^a_t(y)\|_{\ell^2}.
\end{align}
Due to Theorem \ref{thm:pointwise_conv_VDM} and Theorem \ref{thm:spectralconvergence_heatkernel}, the VDM (resp. VDD) is a discretization of $V^a_t$ (resp. $d_{\text{C},t}$), so we may abuse the notation and call $V^a_t$ VDM and $d_{\text{C},t}$ VDD. 
We have the following Theorem saying that locally $d_{\text{C},t}$ approximates the geodesic distance:
\begin{thm}[\cite{singer_wu:2012}]\label{thm:geod}
Take a Riemannian manifold $(\manifold,g)$. For all $t>0$, the VDM $V^a_t$ is diffeomorphic. Furthermore, suppose $x,y\in \manifold$ so that $x=\exp_yv$, where $v\in T_y \manifold$. When $\|v\|^2\ll t\ll 1$ we have 
\begin{align}\label{thm:geod_approximation}
d^2_{\text{C},t}(x,y)= \|v\|^2+O(t\|v\|^2).
\end{align}
\end{thm}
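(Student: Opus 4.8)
The statement has two parts — that $V^a_t$ is a diffeomorphism onto its image for every $t>0$, and the local expansion \eqref{thm:geod_approximation} — and both of them reduce to properties of the heat kernel of the connection Laplacian, which I will write as $k_t(x,y):=\sum_{k\geq1}e^{-\lambda_k t}\,\VectorFieldOnM_k(x)\otimes\VectorFieldOnM_k(y)^{*}$, a linear map $T_y\manifold\to T_x\manifold$. The first thing I would record is the elementary identity
\begin{equation*}
\langle V^a_t(x),V^a_t(y)\rangle_{\ell^2}=N_{d,t}^2\sum_{k,l}e^{-(\lambda_k+\lambda_l)t}\langle\VectorFieldOnM_k(x),\VectorFieldOnM_l(x)\rangle\langle\VectorFieldOnM_k(y),\VectorFieldOnM_l(y)\rangle=N_{d,t}^2\,\|k_t(x,y)\|_{\mathrm{HS}}^2,
\end{equation*}
where $N_{d,t}$ is the normalizing constant in the definition of $V^a_t$ and the second equality is just the expansion of a Hilbert--Schmidt inner product into rank-one pieces. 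Hence $d_{\mathrm{C},t}^2(x,y)=N_{d,t}^2\big(\|k_t(x,x)\|_{\mathrm{HS}}^2+\|k_t(y,y)\|_{\mathrm{HS}}^2-2\|k_t(x,y)\|_{\mathrm{HS}}^2\big)$, so everything becomes a question about $k_t$. Smoothness of $V^a_t$, needed for the first part, follows because for each fixed $t>0$ the series defining $k_t$ converges in $C^\infty$, by elliptic a priori estimates on the $\VectorFieldOnM_k$ together with Weyl's law for $\nabla^2$.

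For the diffeomorphism claim I would proceed as follows. Identify $V^a_t(x)$ with the positive semidefinite operator $E_x^{*}E_x$ on $\ell^2$, where $E_x(a):=\sum_k a_k e^{-\lambda_k t/2}\VectorFieldOnM_k(x)\in T_x\manifold$; since $\{\VectorFieldOnM_k(x)\}_k$ spans $T_x\manifold$, the map $E_x$ is onto. If $V^a_t(x)=V^a_t(y)$ then $E_x^{*}E_x=E_y^{*}E_y$, and uniqueness in the polar decomposition produces an isometry $W:T_x\manifold\to T_y\manifold$ with $E_y=WE_x$, i.e.\ $\VectorFieldOnM_k(y)=W\VectorFieldOnM_k(x)$ for all $k$. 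Then $k_s(y,z)=W\,k_s(x,z)$ for all $z$ and all $s>0$, so $(e^{s\nabla^2}V)(y)=W\,(e^{s\nabla^2}V)(x)$ for every smooth vector field $V$; letting $s\downarrow0$ gives $V(y)=W\,V(x)$ for every $V$, which is impossible unless $x=y$ (take $V$ vanishing at $x$ but not at $y$). Injectivity of $dV^a_t$ at $x$ is obtained in the same spirit: differentiating $V^a_t$ along a curve through $x$, using that the metric is parallel, gives $\langle\nabla_\xi V,V'(x)\rangle+\langle V(x),\nabla_\xi V'\rangle=0$ for all smooth $V,V'$ whenever $dV^a_t|_x(\xi)=0$; choosing $V'$ parallel along the $\xi$-geodesic forces $\nabla_\xi V=0$ for every smooth $V$, hence $\xi=0$. (Alternatively one may invoke the vector-bundle version of the B\'erard--Besson--Gallot heat-kernel embedding theorem.) Compactness of $\manifold$ then upgrades ``smooth injective immersion'' to ``diffeomorphism onto its image''.

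For \eqref{thm:geod_approximation} the key input is the short-time parametrix expansion of the connection heat kernel: uniformly for $x,y$ inside the injectivity radius,
\begin{equation*}
k_t(x,y)=(4\pi t)^{-d/2}e^{-d(x,y)^2/4t}\big(\Phi_0(x,y)+t\,\Phi_1(x,y)+O(t^2)\big),\qquad \Phi_0(x,y)=j(x,y)^{-1/2}P_{x,y},
\end{equation*}
where $P_{x,y}$ is parallel transport along the minimizing geodesic and $j$ is the Jacobian of $\exp$, with $j(x,x)=1$ and $j(x,y)=1+O(d(x,y)^2)$. Since $P_{x,y}$ is orthogonal, $\|\Phi_0(x,y)\|_{\mathrm{HS}}^2=d\,(1+O(d(x,y)^2))$, whence $\|k_t(x,y)\|_{\mathrm{HS}}^2=(4\pi t)^{-d}e^{-d(x,y)^2/2t}\big(d+O(t)+O(d(x,y)^2)\big)$ uniformly. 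I would then set $g(x,y):=\|k_t(x,x)\|_{\mathrm{HS}}^2+\|k_t(y,y)\|_{\mathrm{HS}}^2-2\|k_t(x,y)\|_{\mathrm{HS}}^2$: it is symmetric, vanishes on the diagonal (since $d_{\mathrm{C},t}^2(x,x)=0$) together with its first differential there (by that identity and symmetry in $x,y$), so each coefficient of its $t$-expansion is $O(d(x,y)^2)$ uniformly; in particular no ``$v$-independent'' remainder survives. Plugging $x=\exp_y v$ (so $d(x,y)=\|v\|$) into $g$ and expanding $e^{-\|v\|^2/2t}=1-\tfrac{\|v\|^2}{2t}+O(\|v\|^4/t^2)$ — legitimate when $\|v\|^2\ll t$ — leaves the leading term $(4\pi t)^{-d}\,d\,\|v\|^2/t$ with relative error $O(t)+O(\|v\|^2/t)$. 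Multiplying by $N_{d,t}^2$, and using that $N_{d,t}$ is chosen precisely so that $N_{d,t}^2(4\pi t)^{-d}\,d/t=1$, yields $d_{\mathrm{C},t}^2(x,y)=\|v\|^2(1+O(t))$.

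The step I expect to be the main obstacle is the third one: obtaining the short-time asymptotics of the heat kernel of a genuine vector bundle (not merely of the scalar Laplacian), \emph{off} the diagonal and \emph{uniformly} in the base points, and then checking that in forming $g$ all coefficients of the $t$-expansion that are not proportional to $d(x,y)^2$ cancel between the diagonal and off-diagonal pieces. The cleanest device for that cancellation is to exploit that $g$ is a symmetric function vanishing to second order along the diagonal; once that bookkeeping is under control, the remaining algebra (expanding the exponential in the regime $\|v\|^2\ll t$ and collecting the leading power of $t$) is routine.
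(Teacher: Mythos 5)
The paper itself gives no proof of this statement---Theorem~\ref{thm:geod} is imported verbatim from \cite{singer_wu:2012}---so the only meaningful comparison is with the argument in that reference, and your proposal reconstructs it faithfully: the identity $\langle V^a_t(x),V^a_t(y)\rangle=N_{d,t}^2\|k_t(x,y)\|_{HS}^2$ reducing everything to connection-heat-kernel asymptotics, the off-diagonal parametrix with leading term $j^{-1/2}P_{x,y}$ (so that $\|\Phi_0\|_{HS}^2=d+O(d(x,y)^2)$), the observation that the symmetric function $g$ vanishes to second order on the diagonal so that all non-$\|v\|^2$ contributions cancel at each order in $t$, and a B\'erard--Besson--Gallot-type spectral-embedding argument (via the Gram operator $E_x^{*}E_x$ and polar decomposition) for the diffeomorphism claim are precisely the ingredients of the original proof. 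The only discrepancy is in the normalization: your last step requires $N_{d,t}=(4\pi)^{d/2}t^{(d+1)/2}/\sqrt{d}$, which is the reciprocal of the prefactor printed in the paper's definition of $V^a_t$; your bookkeeping is the internally consistent one (the same issue appears in the paper's statement of the diffusion-maps analogue), so this reflects a typo in the quoted constant rather than a gap in your argument.
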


Although GL is a special case of CGL, with GL we may define a different embedding which has different features. Given $t>0$ and $0\leq \delta<1$, the 
{\it diffusion maps (DM) with diffusion time $t$}\footnote{In practice, we may consider {\it the truncated diffusion maps (tDM) with diffusion time $t$ and accuracy $\delta$}, which is defined as
$\Phi_{t,n,m(\delta,t)}:\,x_i \mapsto \left( \nu_{n,l}^tu_{n,l}(i) \right)_{l=2}^{m(\delta,t)}$,
where $m(\delta,t)\in\NN$ such that $\lambda_{m(\delta,t)}^t>\delta\lambda_1^t$ and $\lambda_{m(\delta,t)+1}^t\leq\delta\lambda_1^t$ \cite{coifman_lafon:2006}} as 
\begin{equation}
\Phi_{t,n}:\,x_i \mapsto \left( \nu_{n,l}^tu_{n,l}(i) \right)_{l=2}^{n}.
\end{equation}

One similar but different algorithm is the {\it Laplacian eigenmaps} \cite{belkin_niyogi:2003,belkin_niyogi:2005}, that is, $x_i$ is mapped to $\left(u_{n,l}(i) \right)_{l=2}^{m}$, which can be viewed as a special DM with diffusion time $t=0$ and $1<m\leq n$ is chosen by the user.
Yet another similar quantity referred to as the {\it global point signature} proposed in \cite{Rustamov:2007}, which maps $x_i$ to $\left( (-\ln \nu_{n,l} )^{-1/2} u_{n,l}(i) \right)_{l=2}^m$, where $m$ is chosen by the user. Another variation is the {\it commute time embedding}  \cite{Qiu_Hancock:2007}.
We mention in the Laplacian eigenmaps, global point signature and commute time embedding, the notion ``diffusion'' does not exist. Although these mappings are diffeomorphic to each other when $m=n$ via a linear transformation, asymptotically their behaviors are different. Furthermore, even if the connection function is trivial, the VDM and DM are different.
With DM, we introduce a new metric between sampled points, which is referred to as {\it diffusion distance} (DD):
\begin{align}
d_{\textup{DM},t,n}(x_i,\,x_j) := \|\Phi_{t,n}(x_i)-\Phi_{t,n}(x_j)\|_{\RR^{n-1}}.
\end{align}
To study $d_{\textup{DM},t,n}$, we take $a\in \mathcal{B}(\manifold,g)$ and $t>0$, and map $x\in \manifold$ to the Hilbert space $\ell^2$ by \cite{berard_besson_gallot:1994}
\begin{equation}
\Phi^a_t:x \mapsto \sqrt{\text{vol}(\manifold)}\left( e^{-\gamma_\ell t}\phi_\ell(x) \right)_{\ell=1}^\infty,
\end{equation}
With the map $\Phi^a_t$, we are able to define a new affinity between pairs of points:
\begin{align}
d_{\textup{DM},t}(x,y) := \|\Phi^a_t(x)-\Phi^a_t(y)\|_{\ell^2}.
\end{align}
Due to Theorem \ref{thm:pointwise_conv_VDM} and Theorem \ref{thm:spectralconvergence_heatkernel}, the DM (resp. DD) is a discretization of $\Phi^a_t$ (resp. $d_{\text{DM},t}$), so we may abuse the notation and call $\Phi^a_t$ DM and $d_{\text{DM},t}$ DD. 

It has been shown that the DM satisfies the following ``almost isometric'' property \cite{singer_wu:2012}:
\begin{thm}\label{thm:geod:DM}
Take a Riemannian manifold $(\manifold,g)$. For all $t>0$, $\Phi^a_t$ is diffeomorphic. Furthermore, suppose $x,y\in \manifold$ so that $x=\exp_yv$, where $v\in T_y \manifold$. When $\|v\|^2\ll t\ll 1$ we have
\begin{align}\label{thm:geod_approximation}
d^2_{\textup{DM},t}(x,y)= \|v\|^2+O(t\|v\|^2).
\end{align}
\end{thm}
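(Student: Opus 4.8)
The plan is to reduce the diffusion distance to values of the heat kernel of $\Delta_g$ and then apply the classical short-time parametrix expansion. Write $H_s(x,y)=\sum_{\ell}e^{-\gamma_\ell s}\phi_\ell(x)\phi_\ell(y)$ for the heat kernel of $\Delta_g$ on the compact manifold $\manifold$. Since $\{\phi_\ell\}$ is an orthonormal basis of $L^2(\manifold)$, the definition of $\Phi^a_t$ together with Parseval gives
$$
d^2_{\textup{DM},t}(x,y)=\vol(\manifold)\sum_{\ell}e^{-2\gamma_\ell t}\bigl(\phi_\ell(x)-\phi_\ell(y)\bigr)^2=\vol(\manifold)\bigl(H_{2t}(x,x)+H_{2t}(y,y)-2H_{2t}(x,y)\bigr).
$$
(Here the normalizing factor in $\Phi^a_t$ is to be read as carrying the appropriate power of $t$, exactly as in \cite{berard_besson_gallot:1994} and as visible in the $t^{-(d+1)/2}$ factor of the companion map $V^a_t$ above; this power is fixed precisely so that the leading coefficient produced below equals $1$.)

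Next I would invoke the Minakshisundaram--Pleijel expansion: on $\manifold$, for $x,y$ within the injectivity radius and $s\downarrow 0$, uniformly, $H_s(x,y)=(4\pi s)^{-d/2}e^{-d(x,y)^2/(4s)}\bigl(u_0(x,y)+u_1(x,y)\,s+O(s^2)\bigr)$, with $u_0,u_1$ smooth and symmetric in $(x,y)$ and $u_0(x,x)=1$. Apply this at $s=2t$: on the diagonal $H_{2t}(x,x)=(8\pi t)^{-d/2}(1+2t\,u_1(x,x)+O(t^2))$, and off-diagonal, using $d(x,y)=\|v\|$ when $x=\exp_y v$ together with $e^{-\|v\|^2/(8t)}=1-\tfrac{\|v\|^2}{8t}+O(\|v\|^4/t^2)$,
$$
H_{2t}(x,y)=(8\pi t)^{-d/2}\Bigl(1-\tfrac{\|v\|^2}{8t}+O(\|v\|^4/t^2)\Bigr)\bigl(u_0(x,y)+2t\,u_1(x,y)+O(t^2)\bigr).
$$
The one elementary but load-bearing point is that for any smooth function $u$ symmetric in its two arguments, the second difference satisfies $u(x,x)+u(y,y)-2u(x,y)=O(d(x,y)^2)$, because its first-order term vanishes by the identity $\nabla_1 u=\nabla_2 u$ on the diagonal. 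Applied to $u_0$ this gives $u_0(x,y)=1+O(\|v\|^2)$, and applied to $u_1$ it gives $u_1(x,x)+u_1(y,y)-2u_1(x,y)=O(\|v\|^2)$. Substituting into the combination $H_{2t}(x,x)+H_{2t}(y,y)-2H_{2t}(x,y)$, the order-$1$ terms cancel and what survives is $(8\pi t)^{-d/2}\bigl(\frac{\|v\|^2}{4t}+O(\|v\|^2)+O(t\|v\|^2)+O(\|v\|^4/t^2)\bigr)$. Hence $d^2_{\textup{DM},t}(x,y)=\vol(\manifold)(8\pi t)^{-d/2}\frac{\|v\|^2}{4t}\bigl(1+O(t)+O(\|v\|^2/t)\bigr)$, and with the normalization fixed above the prefactor is $1$, so $d^2_{\textup{DM},t}(x,y)=\|v\|^2+O(t\|v\|^2)$ in the regime $\|v\|^2\ll t\ll 1$ (the $O(\|v\|^2/t)$ relative error being $o(1)$ there, exactly as in the parallel statement of Theorem~\ref{thm:geod}).

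For the claim that $\Phi^a_t$ is diffeomorphic onto its image I would argue as in \cite{berard_besson_gallot:1994}: smoothness of $\Phi^a_t$ follows from convergence of its defining series in every $C^k(\manifold)$, since Weyl-type bounds on $\|\phi_\ell\|_{C^k}$ are absorbed by $e^{-\gamma_\ell t}$; injectivity follows because $\Phi^a_t(x)=\Phi^a_t(y)$ forces $\phi_\ell(x)=\phi_\ell(y)$ for all $\ell$, hence $H_s(x,\cdot)\equiv H_s(y,\cdot)$ for all $s>0$, which on a connected manifold forces $x=y$; and $\Phi^a_t$ is an immersion because if some nonzero covector at $x$ annihilated every $d\phi_\ell(x)$ it would annihilate $d_xH_s(x,\cdot)$ for all $s$, contradicting that $\nabla_x H_s(x,y)$ is, as $y\to x$, asymptotic to a nonzero multiple of $\exp_x^{-1}(y)$ and thus spans $T_x\manifold$. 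An injective immersion of a compact manifold is a diffeomorphism onto its image.

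The main obstacle is the analytic input behind the second paragraph: one needs the short-time heat-kernel expansion with a remainder uniform in $x\in\manifold$ and in $y$ over a geodesic ball around $x$ (the standard parametrix construction supplies this on a compact manifold), and one must carry out the bookkeeping that keeps the sub-leading pieces coming from $u_0$ and $u_1$ at size $O(\|v\|^2)$ rather than $O(\|v\|)$ — this is precisely where the symmetry $u_j(x,y)=u_j(y,x)$ is indispensable, and it is what makes the leading term come out cleanly proportional to $\|v\|^2/t$. A secondary subtlety is managing the interplay of the two smallness regimes $t\to 0$ and $\|v\|^2/t\to 0$ so that the relative error is genuinely $o(1)$.
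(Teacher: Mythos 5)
You should know at the outset that the paper itself contains no proof of this theorem: it is quoted verbatim from \cite{singer_wu:2012} (as is its VDM companion, Theorem~\ref{thm:geod}), so there is no in-paper argument to compare against. That said, your route --- rewriting $d^2_{\textup{DM},t}(x,y)$ as the heat-kernel second difference $\vol(\manifold)\bigl(H_{2t}(x,x)+H_{2t}(y,y)-2H_{2t}(x,y)\bigr)$, feeding in the Minakshisundaram--Pleijel parametrix, and using the symmetry of the $u_j$ to force each second difference to vanish to order $d(x,y)^2$ --- is the standard argument for statements of this type (it is essentially the one in \cite{berard_besson_gallot:1994} and in the cited source). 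Your reading of the normalization is also the right one: the $\sqrt{\vol(\manifold)}$ prefactor as printed does not produce leading coefficient $1$, and must be understood as carrying the power of $t$ visible in the companion map $V^a_t$. The diffeomorphism argument (injectivity via $H_s(x,\cdot)\to\delta_x$, immersion via the gradient asymptotics of the parametrix, compactness to upgrade an injective immersion to an embedding) is sound.

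The one genuine gap is your handling of the parametrix remainder. You carry it only as a multiplicative $O(s^2)$ inside the bracket, so after the cancellation the second difference still contains a stray term of size $(8\pi t)^{-d/2}O(t^2)$ carrying no factor of $\|v\|^2$. In the regime $\|v\|^2\ll t\ll 1$ this is \emph{not} dominated by the leading term $(8\pi t)^{-d/2}\|v\|^2/(4t)$: the ratio is of order $t^3/\|v\|^2$, which blows up if, say, $\|v\|^2=t^4$. The fix is the very device you already use for $u_0$ and $u_1$: the remainder $R_s=H_s-P_s^{(N)}$ is itself a smooth \emph{symmetric} function of $(x,y)$, so its second difference is $O\bigl(\|R_s\|_{C^2}\,d(x,y)^2\bigr)$; one must therefore take the parametrix to sufficiently high order $N$ (the standard construction on a compact manifold gives $C^2$ bounds on $R_s$ that improve with $N$) so that this contribution is absorbed into $O(t\|v\|^2)$. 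Two smaller remarks. First, your own bookkeeping honestly yields a relative error $O(t)+O(\|v\|^2/t)$, which is $o(1)$ under $\|v\|^2\ll t\ll 1$ but gives the stated $O(t\|v\|^2)$ only when additionally $\|v\|^2=O(t^2)$; this looseness is inherited from the theorem statement rather than introduced by you, but it should be said explicitly. Second, in the immersion step the phrase ``asymptotic to a nonzero multiple of $\exp_x^{-1}(y)$'' elides the fact that you must send $s\to 0$ at fixed small $\|w\|$ so that the $w/(2s)$ term actually dominates $\nabla_x u_0$; once that is spelled out the step is fine.
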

The above theorems, when combined with the above spectral convergence theorem, says that the VDD and DD provide an accurate estimation of the geodesic between two close points. While combined with the manifold sparsity property, we have the following practical fact -- if we are allowed a positive small error when we estimate the geodesic distance, we do not need to recover the whole eigen-structure. Instead, the first few eigenvalues and eigenvectors are enough. 

We have the following statement shown in \cite{Portegies:2013,Bates:2014}. Fix $\epsilon>0$ and $(\manifold,g)$ is a $d$-dim manifold satisfying $\Ric(g)\geq (d-1)kg,\,\mbox{vol}(\manifold)\leq V,\,\text{inj}(\manifold)\leq I$.
Then there exists a $t_0=t_0(d,k,I,\epsilon)$ such that for all $0<t<t_0$, these is $N_E=N_E(d,k,I,V,\epsilon,t)$ so that if $N\geq N_E$, the {\it truncated diffusion maps}
\begin{align}
\Phi^a_{t,N}:\,x\mapsto\sqrt{\text{vol}(\manifold)}\big(e^{-\gamma_\ell t}\phi_\ell(x)\big)_{\ell=1}^N
\end{align}
is an embedding of $\manifold$ into $\RR^N$ and 
\begin{align}
1-\epsilon<\left|\frac{(2t)^{(n+2)/4}\sqrt{2}(4\pi)^{n/4}}{\sqrt{\text{vol}(\manifold)}}\ud \Phi_{N,t}|_x\right|<1+\epsilon.
\end{align}

Before ending this section, we show an interesting example regarding the data visualization and embedding issue. Take the Trefoil knot $\manifold$ embedded  in $\RR^3$ by 
$\iota(t)=[\sin(t)+2\sin(2t),\,\cos(t)-2\cos(2t),\,-\sin(3t)]$, where $t\in [0,2\pi)$.
We refer to Figure \ref{fig5:Trefoil} for an illustration. Note that the Trefoil knot is not homeomorphic to $S^1$.
We sample $1000$ points uniformly from $[0,2\pi)$ independently; that is, we sample $1000$ points on $\manifold$ non-uniformly. If we want to visualize the dataset, we may apply the tDM to embed $\manifold$ to $\RR^3$ (or $\RR^2$). The result is shown in Figure \ref{fig5:Trefoil}. The results deserve some discussion. Note that the tDM maps the Trefoil knot into a circle, which is not homeomorphic to the Trefoil knot; that is, the topology of the Trefoil knot is not preserved. Note that for the visualization purpose, we only choose the first $3$ (or $2$) eigenvectors, which leads to a map which deteriorate the topology. If we want to guarantee the preservation of the topology, we need the embedding theorem counting how many eigenvectors we need. This opens the following question, in particular when the dataset is noisy -- how to balance between different data analysis results, for exampling, how to balance between preserving the topology information and data visualization?

\begin{figure}
\begin{center}
\includegraphics[width=0.5\columnwidth]{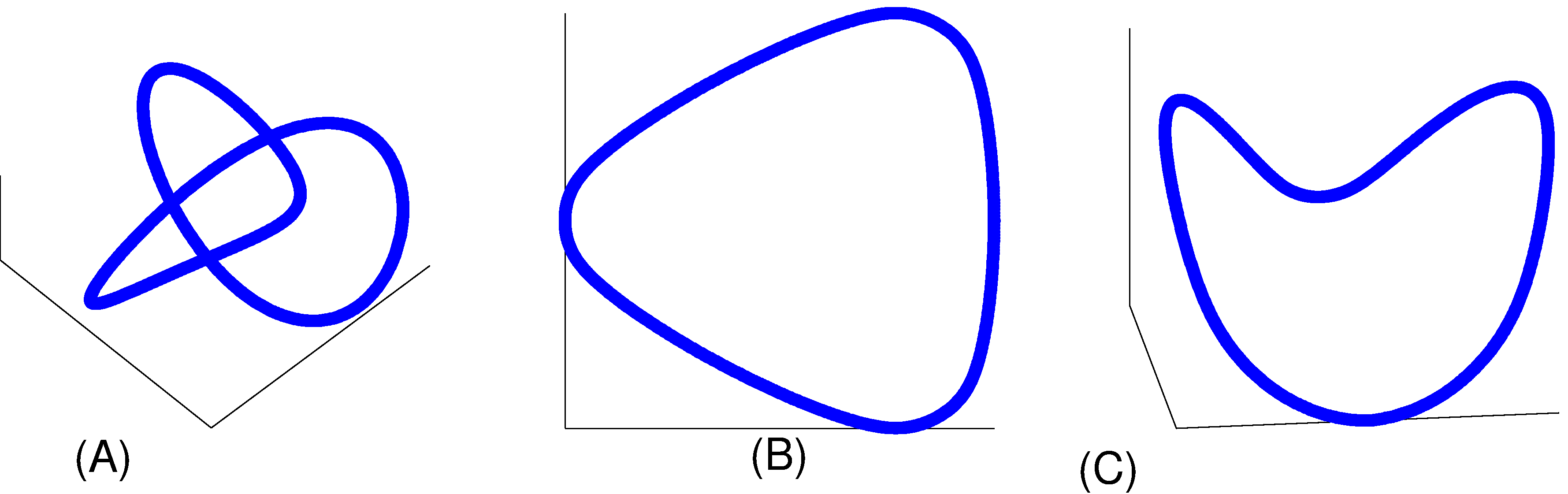}
\end{center}
\caption{\small Left: the Trefoil knot; middle: the truncated DM with the first 2 non-trivial eigenvectors of the GL; right: the truncated DM with the first 3 non-trivial eigenvectors of the GL.}\label{fig5:Trefoil}
\end{figure}

\end{document}